\newcolumntype{C}{>{$}c<{$}}
\newcolumntype{L}{>{$}l<{$}}
\setlist[enumerate]{topsep=0pt,itemsep=-1ex,partopsep=1ex,parsep=1ex}
\setlist[itemize]{topsep=0pt,itemsep=-1ex,partopsep=1ex,parsep=1ex}
\theoremstyle{plain}
\newtheorem{theo}{Theorem}[section]
\newtheorem{lemma}[theo]{Lemma}
\newtheorem{conj}[theo]{Conjecture}
\newtheorem{claim}[theo]{Claim}
\theoremstyle{definition}
\newtheorem{defn}[theo]{Definition}
\newtheorem{que}[theo]{Question}
\newtheorem{obs}[theo]{Observation}
\newcommand{\mc}[1]{\mathcal{#1}}
\newcommand{\mb}[1]{\mathbb{#1}}
\newcommand{\ms}[1]{\mathscr{#1}}
\newcommand{\sm}{\setminus}
\newcommand{\ov}{\overline}
\newcommand{\wt}{\widetilde}
\newcommand{\eps}{\varepsilon}
\newcommand{\es}{\emptyset}
\newcommand{\wh}{\widehat}
\newcommand{\aA}{\alpha}
\newcommand{\bB}{\beta}
\newcommand{\gG}{\gamma}
\newcommand{\dD}{\delta}
\newcommand{\kK}{\kappa}
\newcommand{\lL}{\lambda}
\newcommand{\sS}{\sigma}
\newcommand{\sd}{\bigtriangleup}
\newcommand{\DD}{\Delta}
\newcommand{\ind}{\mathbbm{1}}
\newcommand{\Cols}{\ms{C}}
\newcommand{\abs}{\mathrm{abs}}
\newcommand{\col}{\mathrm{col}}
\newcommand*\ECone{\raisebox{0pt}{\includegraphics{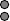}}}
\newcommand*\ECtwo{\raisebox{0pt}{\includegraphics{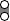}}}
\title{Stability of transversal Hamilton cycles and paths}
\author{Yangyang Cheng\thanks{Mathematical Institute,
University of Oxford, Oxford, UK. Email: yangyang.cheng@maths.ox.ac.uk.}
\thanks{Yangyang Cheng was supported by a PhD studentship of ERC Advanced Grant 883810.}
\and Katherine Staden\thanks{School of Mathematics and Statistics, The Open University, Walton Hall, Milton Keynes, UK. Email: katherine.staden@open.ac.uk.}
\thanks{Katherine Staden was supported by EPSRC Fellowship EP/V025953/1.}}
\begin{document}

\maketitle

\begin{abstract}
Given graphs $G_1,\ldots,G_s$ all on a common vertex set and a graph $H$
with $e(H) = s$,
a copy of $H$ is \emph{transversal} or \emph{rainbow} if it contains
one edge from each $G_i$.
We establish a stability result for transversal Hamilton cycles: the minimum degree required to guarantee a transversal Hamilton cycle can be lowered as long as the graph collection $G_1,\ldots,G_n$ is far in edit distance from several extremal cases.
We obtain an analogous result for Hamilton paths.
The proof is a combination of our newly developed regularity-blow-up method for transversals, along with the absorption method.
\end{abstract}

\section{Introduction}

Given graphs $G_1,\ldots,G_s$ on a common vertex set and a graph $H$ with $e(H) = s$, a \emph{transversal} copy of $H$ is a copy of $H$ containing exactly one edge from each of the graphs $G_1,\ldots,G_s$. We often think of each $G_i$ having the colour $i$, and thus we also call a transversal copy of $H$ a \emph{rainbow} copy. The following general question was formulated by Joos and Kim in \cite{JK}.
\begin{que}\label{que}
Let $H$ be a graph with $s$ edges and let $\bm{G}=(G_1,\ldots,G_s)$ be a collection of 
graphs on a common vertex set $V$. 
Which properties imposed on $\bm{G}$ yield a transversal copy of $H$?
\end{que}
Note that when $G_1=\ldots=G_s=G\in \bm{G}$, a transversal copy of $H$ inside $\bm{G}$ is equivalent to a subgraph $H$ of $G$. So the question of Joos and Kim is a generalisation of the classical embedding problem.
An important line of research for the classical problem has been to seek sufficient conditions for a graph to contain a copy of $H$,
which is of particular interest as the problem is often computationally difficult.
Question~\ref{que} is the transversal generalisation.

\subsection{General transversal embedding}
There are transversal analogues of several classical embedding theorems.
Mantel's well-known theorem from 1907 states that any $n$-vertex graph with more than $\lfloor n^2/4 \rfloor$ edges contains a triangle.
Aharoni, DeVos, de la Maza, Montejano and {\v{S}}{\'a}mal~\cite{Aharoni} showed that 
in any collection of three graphs on the same vertex set, each one must have significantly more
than $n^2/4$ edges to guarantee a transversal triangle.
It is still a major open problem in this area to generalise their result to arbitrary $K_r$.

Dirac's theorem on Hamilton cycles from 1952 states that in an $n$-vertex graph $G$,
minimum degree $\dD(G) \geq n/2$ suffices to guarantee a Hamilton cycle.
Aharoni~\cite{Aharoni} conjectured that the transversal generalisation holds:
given graphs $G_1,\ldots,G_n$ on a common vertex set of size $n$, if $\dD(G_i)\geq n/2$ for each $i\in [n]$, then there exists a transversal Hamilton cycle. This conjecture was asymptotically verified by Cheng, Wang and Zhao \cite{Cheng1} and later fully proved by Joos and Kim \cite{JK}, who used a short `elementary' argument.

As in Dirac's theorem, to guarantee a transversal copy of a spanning graph,
it is natural to impose a minimum degree condition on every graph in a collection.
Given a graph $H$ and a positive integer $n$, the \emph{transversal minimum degree threshold} of $H$ is the minimum $d$ such that if every graph in a collection of $e(H)$ $n$-vertex graphs has minimum degree at least $d$, then the collection contains a transversal copy of $H$.
If the transversal minimum degree threshold is, up to an additive $o(n)$ term, the same as the minimum degree threshold (for a single $n$-vertex graph)
then we say that $H$ is \emph{colour-blind}.
Note that the transversal minimum degree threshold
is always at least as large as the minimum degree threshold
for embedding in a single graph, 
which can be seen by taking a graph collection of identical copies of one graph.

Joos and Kim~\cite{JK} showed that perfect matchings are colour-blind (in fact without any error term).
Given a graph $F$ with $v(F) | n$, an $n$-vertex $F$-factor $H$ is a spanning graph consisting of vertex-disjoint copies of $F$. 
Cheng, Han, Wang and Wang~\cite{Cheng2}
and independently, Montgomery, M\"uyesser and Pehova~\cite{MMP},
showed that 
the $K_k$-factor is colour-blind.
The authors of~\cite{MMP} further
characterised those $F$ for which $F$ is colour-blind,
and determined the correct transversal minimum degree threshold when $F$ is not colour-blind.
These results are approximate transversal versions of the Hajnal-Szemer\'edi theorem~\cite{HS}
and a theorem of K\"uhn and Osthus~\cite{KOfactor}.
In~\cite{MMP} it was also shown that spanning trees with maximum degree $o(n/\log(n))$
are colour-blind,
generalising a well-known
result of Koml\'os, S\'ark\"ozy and Szemer\'edi~\cite{KSStree}.

Gupta, Hamann, M\"uyesser, Parczyk and Sgueglia~\cite{GHMPS} showed that the $(k-1)$-th power $C^{k-1}_n$ of a Hamilton cycle is colour-blind,
as are certain hypergraph cycles. 
Chakraborti, Im, Kim and Liu~\cite{CIKL} generalised many of the results in this section by proving a `transversal bandwidth theorem' which gives an upper bound on the transversal minimum degree threshold for a large class of graphs, which is tight in many cases, and hence gives a rich class of graphs $H$ which are colour-blind.

Other than the results of Joos and Kim on Hamilton cycles and perfect matchings~\cite{JK},
there are few results which determine the exact transversal minimum degree threshold.
It would be interesting to improve all of the results in this section in this direction
and determine whether a given $H$ is `exactly colour-blind'.
We make further remarks on this at the end of the paper in Section~\ref{sec:conclude}.

\subsection{Transversal embedding of Hamilton cycles}
Following Joos and Kim's transversal generalisation of Dirac's theorem, there has been much progress on further questions related to the Hamiltonian case of Question~\ref{que} for graph collections on $n$ vertices. 
Cheng, Wang and Zhao~\cite{Cheng1} also showed that minimum degree $(n+1)/2$ is sufficient to guarantee a rainbow cycle of every length $3,\ldots,n-1$.
Bradshaw~\cite{bradshaw} proved analogues of the results of Joos and Kim in bipartite graph collections.
Chakraborti, Kim, Lee and Seo~\cite{CKLS} studied the problem of finding transversal Hamilton cycles in tournaments (a tournament is a complete oriented graph).
Li, Li and Li~\cite{LLL} and Zhang and van Dam~\cite{ZVD} imposed Ore-type conditions rather than a minimum degree condition to find transversal Hamilton paths and cycles.
Bradshaw, Halasz and Stacho~\cite{BHS} showed that minimum degree at least $n/2$ guarantees not just one transversal Hamilton cycles as per~\cite{JK}, but exponentially many.

Transversal Hamilton cycles have also been well studied in hypergraphs.
Extending a result of R\"odl, Ruci\'nski and Szemer\'edi~\cite{RSS1,RSS},
Cheng, Han, Wang, Wang and Yang~\cite{TWWY} proved an asymptotically optimal bound
on the minimum codegree guaranteeing a tight Hamilton cycle in a uniform hypergraph.
The main result in~\cite{GHMPS} implies asymptotically optimal results for other types of Hamilton cycles in uniform hypergraphs, for various degree conditions.
Answering a question left open here, the notable case of vertex degree for tight $3$-uniform Hamilton cycles was obtained by Tang, Wang, Wang and Yan~\cite{TWWY}.

The problem of finding a Hamilton cycle with a given colouring in a graph collection (rather than a transversal Hamilton cycle which is \emph{some} rainbow cycle) was studied by Bowtell, Morris, Pehova and Staden in~\cite{BMPS}.
It turns out that minimum degree $(\frac{1}{2}+o(1))n$ is enough to guarantee such a cycle,
but at least $\lfloor n/2\rfloor+1$ is required.
Ferber, Han and Mao~\cite{FHM} and
Anastos and Chakraborti~\cite{AC} studied \emph{robustness} of transversal Hamiltonicity.
In~\cite{AC} the threshold at which a graph collection consisting of random subgraphs of graphs satisfying Dirac's condition contains a transversal Hamilton cycle is determined.
This generalises a result of Krivelevich, Lee and Sudakov~\cite{KLS}.

\subsection{Stability for transversal Hamilton cycles and paths}\label{sec:stabintro}

In this paper, we are interested in the stability phenomenon for Hamilton paths and cycles,
and will show that graph collections which do not contain a Hamilton cycle, but whose graphs all have minimum degree slightly smaller than that needed to guarantee one, in fact have a special structure.

We define two extremal graphs $\ECone$ and $\ECtwo$ of order $n$ as follows:
\begin{itemize}
\item the $n$-vertex graph $\ECone$ is the union of two disjoint cliques of size as equal as possible;
\item the $n$-vertex graph $\ECtwo$ is a complete bipartite graph whose two parts have size as equal as possible. 
\end{itemize}
Since Dirac's theorem shows that a Hamilton cycle is guaranteed in any $n$-vertex graph $G$ with minimum degree $\dD(G) \geq n/2$, graphs satisfying this minimum degree condition are called \emph{Dirac graphs}.
As mentioned in the previous section, Joos and Kim~\cite{JK} showed that any collection of $n$ Dirac graphs contains a transversal Hamilton cycle. 

Dirac's theorem is tight:
the graph $\ECone$ does not contain a Hamilton cycle, and when $n$ is even it has minimum degree $n/2-1=\lceil n/2\rceil-1$.
The graph $\ECtwo$ does not contain a Hamilton cycle when $n$ is odd, in which case it has minimum degree $(n-1)/2=\lceil n/2\rceil-1$ and contains no Hamilton cycle.
In fact, when $n$ is odd, we can add arbitrary edges into the smaller part of $\ECtwo$
without creating a Hamilton cycle. 
For any $n$, we can perturb either of these examples slightly, at the expense of slightly reducing the minimum degree, to get a non-Hamiltonian graph.

In fact, it is well-known that these are the only examples.
Given $\kK>0$, we say that two graphs $G,G'$ on the same vertex set of size $n$ are \emph{$\kK$-close}
if one can add and remove at most $\kK n^2$ edges of $G$ to obtain $G'$.

\begin{theo}[Stability for Dirac's theorem, folklore]\label{th:dirstab}
For all $\kK>0$, there exist $\mu>0$ and $n_0$ such that the following holds for all integers $n \geq n_0$. Let $G$ be an $n$-vertex graph with $\dD(G) \geq (\frac{1}{2}-\mu)n$. If $G$ contains no Hamilton cycle, then either $G$ is $\kK$-close to $\ECtwo$; or $G$ is $\kK$-close to $\ECone$ with arbitrary edges added inside one part. 
\end{theo}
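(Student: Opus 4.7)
The plan is to analyze the structure of any $n$-vertex non-Hamiltonian graph $G$ with $\dD(G) \geq (1/2-\mu)n$, showing it must be close to $\ECone$ or $\ECtwo$. The argument splits according to whether $G$ is disconnected (giving closeness to $\ECone$) or connected (giving closeness to $\ECtwo$, possibly with extra edges inside one part). If $G$ is disconnected, the minimum degree condition forces every component to have size at least $(1/2-\mu)n + 1$, so $G$ has exactly two components $H_1, H_2$, each of size in $[(1/2-\mu)n+1, (1/2+\mu)n]$. Each $H_i$ has $\dD(H_i) \geq (1/2 - \mu)n \geq |V(H_i)| - 2\mu n - 1$, so is missing at most $O(\mu n^2)$ edges from the complete graph on $V(H_i)$; summing over both, $G$ is $O(\mu)$-close to $\ECone$, and taking $\mu \leq \kK/C$ for a suitable constant $C$ finishes this case.

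Assume now $G$ is connected. By the Erd\H{o}s--Gallai circumference bound, $G$ contains a cycle of length at least $2\dD(G) \geq (1-2\mu)n$. Fix a longest cycle $C = c_1 c_2 \cdots c_\ell c_1$; since $G$ is non-Hamiltonian, $\ell \leq n-1$, and $U := V \setminus V(C)$ has $|U| \leq 2\mu n$. Pick any $u \in U$; connectedness gives $|N_C(u)| \geq d(u) - |U| + 1 \geq (1/2 - 3\mu)n$, and the classical P\'osa insertion argument shows that $S_u := \bracc{c_{(i+1)\bmod\ell} : c_i \in N_C(u)}$ is independent in $G$: an edge $c_{i+1}c_{j+1}$ between two of its elements would produce the longer cycle $u c_i c_{i-1} \cdots c_{j+1} c_{i+1} c_{i+2} \cdots c_j u$, contradicting the maximality of $C$. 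Let $I \supseteq S_u$ be a maximum independent set and $J := V \setminus I$. Since every vertex of $I$ sends all its $\geq (1/2-\mu)n$ edges into $J$, one has $|I|, |J| \in [(1/2-O(\mu))n, (1/2+O(\mu))n]$; moreover $e(I,J) \geq |I|(1/2-\mu)n \geq (1/4 - O(\mu))n^2$, while $|I|\cdot|J| \leq (1/4 + O(\mu))n^2$, so at most $O(\mu n^2)$ non-edges occur between $I$ and $J$. Thus $G$ is $O(\mu)$-close to the complete bipartite graph with parts $I$ and $J$.

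The remaining task is to control $e(J)$. If $|I| > |J|$, a Hamilton cycle cannot exist at all (every second vertex of such a cycle would land in $I$, forcing $|J| \geq |I|$), and $G$ is automatically $\kK$-close to $\ECtwo$, with the extra $J$-edges precisely fitting the permitted ``arbitrary edges added inside one part'' clause. If $|I| \leq |J|$ and $e(J) \leq \kK n^2/2$, $G$ is again $\kK$-close to $\ECtwo$. The main obstacle is the subcase $|I| \leq |J|$ with $e(J) > \kK n^2 / 2$: here one must combine the near-complete bipartite structure between $I$ and $J$ with the many intra-$J$ edges to construct an actual Hamilton cycle in $G$, contradicting the hypothesis. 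Such a cycle is built by weaving a nearly-alternating $I,J$-trail and using a few $J$-$J$ edges to absorb the size difference $|J| - |I|$, appealing to a Chv\'atal-type bipartite Hamiltonicity criterion or a short direct absorbing argument; this requires careful handling of the parity of $n$ and of the few low-bipartite-degree vertices in $J$, but is entirely standard.
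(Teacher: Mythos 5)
The paper states Theorem~\ref{th:dirstab} as folklore and does not supply a proof; the closest internal ingredient is Lemma~\ref{lm:char}, which characterises $\eps$-extremal near-Dirac graphs by a direct double-counting argument on a pair of half-sized sets with few crossing edges, without touching cycles at all. Your approach — take a longest cycle, apply a P\'osa rotation to extract a large independent set $S_u$, and deduce a near-bipartite structure — is the standard textbook route to Dirac stability, but it is genuinely different in flavour from the paper's Lemma~\ref{lm:char}, which only handles the \emph{extremal} branch and leaves the Hamiltonicity of nice graphs as folklore in exactly the same way. So there is no paper proof to compare against; I assess your proposal on its own.

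Two genuine gaps. First, the bound $c(G)\geq\min(n,2\dD(G))$ you cite (it is due to Dirac, not Erd\H{o}s--Gallai) requires \emph{two}-connectivity, whereas you only assumed connectivity. A connected graph with a cut vertex and $\dD(G)\geq(\tfrac12-\mu)n$ is perfectly possible for $\mu>0$: the cut vertex splits $G$ into two pieces each of size roughly $n/2$. That case must be treated separately, though it succumbs to essentially the same argument as your disconnected case (each side of the cut is near-complete, and sharing one vertex changes the edit distance to $\ECone$ by only $O(n)$ edges). Second, and more seriously, the crux of your proof is the subcase $|I|\leq|J|$ with $e(J)>\kK n^2/2$, where you claim a Hamilton cycle can be constructed and declare the argument ``entirely standard.'' This is precisely the step that carries the mathematical content: you must show how to weave the $\geq|J|-|I|$ required intra-$J$ edges into the near-complete bipartite scaffold despite the $O(\mu n)$ vertices of $J$ with low $I$-degree, and you must quantify why $e(J)>\kK n^2/2$ (as opposed to $e(J)>\mu^{1/2}n^2$, say) is the right threshold given that only $O(\mu n)$ useful $J$-edges are actually needed. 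Asserting this is standard without an argument leaves the proof incomplete exactly where it matters. (A minor typographical note independent of your proof: the theorem as printed swaps $\ECone$ and $\ECtwo$ in the conclusion — as the preceding discussion in the paper makes clear, it should read ``close to $\ECone$, or close to $\ECtwo$ with arbitrary edges added inside one part'' — and you silently correct this, which is right.)
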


Given a graph collection $\bm{G}$, we write $\dD(\bm{G}) := \min_{G \in \bm{G}}\dD(G)$ for its \emph{minimum degree}.
Now suppose that a graph collection $\bm{G}=(G_1,\ldots,G_n)$ on a common vertex set of size $n$
with $\dD(\bm{G}) \geq (\frac{1}{2}-\mu)n$
has no transversal Hamilton cycle
(we say $\bm{G}$ is \emph{non-Hamiltonian}).
Of course, one can take $G_i=G$ where $G$ is non-Hamiltonian;
that is, we can take $\bm{G}$ to be a collection of identical copies of one of the graphs in Theorem~\ref{th:dirstab}.
On the other hand, taking copies of $\ECone$ and $\ECtwo$ with, say, uniformly random partitions
will give a graph collection which \emph{is} Hamiltonian with high probability.
However, one can combine these graphs to get further collections without a transversal Hamilton cycle.

\begin{defn}[$\bm{H}^b_a$, half-split graph collection] 
Given integers $a,b\geq 0$, let $\bm{H}_a^b$ be the graph collection on a common vertex set of size $n$ obtained by taking $a$ copies of $\ECone$ and $b$ copies of $\ECtwo$ where they are defined on the same equitable partition $A\cup B$.

We say that a graph collection $\bm{J}$ 
on a common vertex set $V$ of size $n$ is \emph{half-split} if there is $A \subseteq V$ with $|A|=\lfloor n/2\rfloor+1$
such that $J[A]=\es$ and $J[A,V \sm A]$ is complete for all $J \in \bm{J}$.
\end{defn}

 Note that $\dD(\bm{H}_a^b)=\lfloor n/2\rfloor-\ind(a > 0)$.
Note that $\bm{H}_n^0$ is non-Hamiltonian since $\ECone$ is, and when $n$ is odd, $\bm{H}^n_0$ is also non-Hamiltonian since $\ECtwo$ is.
We claim that when $b$ is odd and $a+b=n$, the collection $\bm{H}_a^b$ is non-Hamiltonian. Suppose not, and let $C$ be a transversal Hamilton cycle and cyclically direct its edges. We say an edge of $C$ is of $1$-type if it comes from an $\ECone$ copy and of $2$-type if it comes from an $\ECtwo$ copy. Note that the number of $2$-type edges directed from $A$ to $B$ is equal to the number of $2$-type edges directed from $B$ to $A$. Hence, the total number of $2$-type edges in $C$ is even, which implies that $b$ is even, a contradiction.

A half-split graph collection $\bm{J}$ on $n$ vertices
has $\dD(\bm{J})=\lfloor n/2\rfloor-1$.
Further, $\bm{J}$ is non-Hamiltonian
since if it contained any Hamilton cycle $C$ (with any colours), then between each pair of vertices in $A$ on $C$ there is at least one vertex of $V \sm A$, so $|V \sm A| \geq |A|$, a contradiction.

Given $\kK>0$, we say that two graph collections $\bm{G},\bm{G'}$ on the same common vertex set of size $n$ are \emph{$\kK$-close} if one can add and remove at most $\kK n^3$ edges of graphs in $\bm{G}$ to obtain $\bm{G'}$.
Our first result shows that any non-Hamiltonian collection of $n$ almost-Dirac graphs on $n$ vertices
is close to one of these two families.

\begin{theo}\label{th:rhc}
For all $\kK>0$, there exist $\mu>0$ and $n_0$ such that the following holds for all integers $n \geq n_0$.
Let $\bm{G}=(G_1,\ldots,G_n)$ be a graph collection on a common vertex set $V$ of size $n$, with $\dD(\bm{G})\geq (\frac{1}{2}-\mu)n$. If $\bm{G}$ contains no transversal Hamilton cycle, then either
$\bm{G}$ is $\kK$-close to $\bm{H}_a^b$ for some $a\in [n]$ where $b=n-a$ is odd; or
$\bm{G}$ is $\kK$-close to a half-split graph collection.
\end{theo}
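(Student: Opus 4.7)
The plan is to prove Theorem~\ref{th:rhc} in contrapositive form: supposing that $\bm{G}$ is not $\kK$-close to any $\bm{H}_a^b$ with $b=n-a$ odd and not $\kK$-close to any half-split collection, I aim to construct a transversal Hamilton cycle in $\bm{G}$. The overall strategy is to analyse each $G_i$ against the two extremal configurations in Theorem~\ref{th:dirstab} and then either exploit the colours that are far from both extremes via the regularity-blow-up method together with absorption, or align all near-extremal partitions into a single common bipartition and finish by a parity argument.

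First, applying Theorem~\ref{th:dirstab} with parameter $\kK/4$ to each $G_i$ (or noting that $G_i$ is already Hamiltonian) partitions $[n]$ into three sets $I_0,I_1,I_2$: indices where $G_i$ is far from both extremal graphs; indices where $G_i$ is close to a copy of $\ECone$ on some near-equitable bipartition $(A_i,B_i)$ (possibly with extra edges in one part); and indices where $G_i$ is close to a copy of $\ECtwo$ on some bipartition $(X_i,Y_i)$. Overlaps between the last two are resolved arbitrarily.

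If $|I_0| \geq \tT n$ for a small $\tT = \tT(\kK)$, the colours $i \in I_0$ supply graphs with robust expansion, inherited from being almost-Dirac but far from both extremal cases. I would then apply the transversal regularity and blow-up machinery developed earlier in the paper, building a rainbow absorber using a small reserve of colours from $I_0$ together with the almost-Dirac degree of the remaining colours, and concatenate the absorber with a rainbow long path to obtain a transversal Hamilton cycle. If instead $|I_0| < \tT n$, so almost every $G_i$ is near-extremal, the next step is to align the partitions across $I_1 \cup I_2$: whenever two of them disagree on more than $\sqrt{\kK}\,n$ vertices, one can exploit the disagreement to build rainbow connecting segments which, combined with the absorber, yield a transversal Hamilton cycle. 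Iterating produces a single bipartition $V = A \cup B$ such that, after altering at most $\kK n^2$ edges per graph, every near-extremal $G_i$ realises $\ECone$ or $\ECtwo$ on $(A,B)$.

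Setting $a=|I_1|$ and $b=|I_2|$ after alignment, $\bm{G}$ is $O(\kK)$-close either to $\bm{H}_a^b$ on $(A,B)$, or to a half-split collection if the alignment forces a non-equitable bipartition with $|A|=\lfloor n/2\rfloor+1$. If $b$ is even, the parity argument following the definition of $\bm{H}_a^b$ reverses to produce a transversal Hamilton cycle inside $\bm{H}_a^b$ (pairing $\ECtwo$-coloured edges into $A\to B\to A$ excursions and using $\ECone$-coloured edges inside the parts), which then lifts to $\bm{G}$ via the pre-built absorber; otherwise, closeness to $\bm{H}_a^b$ with $b$ odd or to a half-split collection directly contradicts the starting assumption. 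The main obstacle will be the alignment step coupled with the construction of a rainbow absorbing gadget that tolerates the modification of up to $\kK n^2$ edges per colour, uses only a small reserve of colours, and remains embeddable whether the bulk of the collection is non-extremal or near-extremal---this is the delicate transversal analogue of the classical Dirac absorber and is where the new regularity-blow-up method for transversals is essential.
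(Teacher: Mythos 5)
Your high-level skeleton (far-from-extremal colours give a Hamilton cycle via regularity plus absorption; otherwise align the near-extremal partitions and read off the extremal structure) matches the paper's strategy in outline, but the endgame contains a genuine gap. After alignment you claim that if $b=|I_2|$ is even, then "the parity argument reverses to produce a transversal Hamilton cycle inside $\bm{H}_a^b$, which then lifts to $\bm{G}$ via the pre-built absorber". This step fails for two reasons. First, closeness is measured up to $\kK n^3$ edge edits, so a Hamilton cycle of the idealised collection $\bm{H}_a^b$ does not transfer to $\bm{G}$; indeed $\bm{H}_a^b$ with $b$ even is within $O(n^2)$ edits of $\bm{H}_{a-1}^{b+1}$ with $b+1$ odd, so a collection close to $\bm{H}_a^b$ with $b$ even may have no transversal Hamilton cycle at all --- the parity of the count of near-$\ECtwo$ colours is simply not the right criterion. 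Second, in the aligned case there is no "pre-built absorber" available: the paper's absorbing cycle exists only under stability (many $\aA$-nice colours, or at least $\dD n^2$ crossing pairs of extremal partitions, Lemmas~\ref{lm:absorbcycle} and~\ref{lm:absorbcycle2}); once almost all partitions are aligned such a structure cannot exist, since the genuinely extremal collections contain no transversal Hamilton cycle. What is actually needed in the aligned case, and what the paper does in Section~\ref{sec:ext} without any absorber, is a finer edge-count analysis: whether the near-$\ECtwo$ graphs have many edges inside \emph{both} parts (or many near-$\ECone$ colours together with internal $\ECtwo$-edges on one side) decides the outcome. When such internal edges exist one builds a short transversal path covering the bad vertices and colours and \emph{balancing} the two sides (Lemmas~\ref{lm:cover1} and~\ref{lm:cover2}), then covers the rest with three spanning transversal paths from the blow-up lemma and concatenates (Lemma~\ref{lm:rhc}); when they do not, one shows directly that $\bm{G}$ is close to a half-split collection or to $\bm{H}_a^b$ with $b$ adjusted by one to be odd. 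None of this analysis appears in your proposal, and it cannot be replaced by the parity/lifting argument you describe.

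Two further, smaller issues. Applying Theorem~\ref{th:dirstab} colour-by-colour is not quite the right classification: that theorem only gives structure for non-Hamiltonian graphs, and a graph $G_i$ that is itself Hamiltonian can still be close to $\ECone$ or $\ECtwo$ and hence useless for the absorber; the correct dichotomy is nice versus $\eps$-extremal with a characteristic partition (Lemma~\ref{lm:char}), which holds regardless of Hamiltonicity. Also, in your alignment step a \emph{single} pair of strongly disagreeing partitions does not yield a Hamilton cycle; the weakly stable case requires $\Omega(n^2)$ crossing pairs to build the absorbing structure, so the correct statement is that if there are few crossing pairs then all but a negligible set of colours align, and one must then enter the extremal analysis described above.
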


In contrast to the Hamilton cycle case, our second result demonstrates 
that half-split graph collections and $\bm{H}_{n-1}^0$ are the only extremal constructions (in an approximate sense) for transversal Hamilton paths.
Clearly $\bm{H}^0_{n-1}$ does not contain a transversal Hamilton path,
while if we increase the size of $A$ by one when $n$ is odd, a half-split collection of $n-1$ graphs does not contain a transversal Hamilton path either.

\begin{theo}\label{th:rhp}
For all $\kK>0$, there exist $\mu>0$ and $n_0$ such that the following holds for all integers $n \geq n_0$. Let $\bm{G}=(G_1,\ldots,G_{n-1})$ be a graph collection on a common vertex set $V$ of size $n$, with $\dD(\bm{G})\geq (\frac{1}{2}-\mu)n$. If $\bm{G}$ does not contain a transversal Hamilton path, then $\bm{G}$ is either $\kK$-close to $\bm{H}_{n-1}^0$ or $\bm{G}$ is $\kK$-close to a half-split graph collection.
\end{theo}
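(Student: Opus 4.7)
The plan is to reduce Theorem \ref{th:rhp} to Theorem \ref{th:rhc} by appending a dummy graph. Define the augmented collection $\bm{G}^+:=(G_1,\ldots,G_{n-1},K_V)$ of $n$ graphs on $V$, where $K_V$ is the complete graph on $V$. Since $\dD(K_V)=n-1$, we still have $\dD(\bm{G}^+)\ge(\tfrac12-\mu)n$. Any transversal Hamilton cycle in $\bm{G}^+$ uses exactly one edge $e$ from the $K_V$-coordinate, and deleting $e$ yields a transversal Hamilton path in $\bm{G}$. Contrapositively, the hypothesis that $\bm{G}$ has no transversal Hamilton path forces $\bm{G}^+$ to contain no transversal Hamilton cycle, so Theorem \ref{th:rhc} applies with parameter $\kK'$ chosen sufficiently small in terms of $\kK$, placing $\bm{G}^+$ close to one of the two extremal families.

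If $\bm{G}^+$ is $\kK'$-close to a half-split collection $\bm{J}$, then removing the $K_V$-coordinate changes the total edit distance by at most $\binom{n}{2}\ll \kK n^3$, so $\bm{G}$ is $\kK$-close to the restricted half-split collection $\bm{J}\sm\{J_n\}$ on $n-1$ graphs, giving the second outcome. Otherwise $\bm{G}^+$ is $\kK'$-close to some $\bm{H}_a^b$ with $a+b=n$ and $b$ odd; let $j\in[n]$ be the position in $\bm{H}_a^b$ matched to the $K_V$-coordinate. If $j$ is an $\ECtwo$ slot and $b=1$, then discarding it yields $\bm{H}_{n-1}^0$ and the first outcome follows. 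In every other sub-case (namely $\bm{G}$ close to $\bm{H}_{a-1}^b$ when $j$ is $\ECone$ and $b\ge 1$, or $\bm{G}$ close to $\bm{H}_a^{b-1}$ when $j$ is $\ECtwo$ and $b\ge 3$) the target collection has the form $\bm{H}_c^d$ with $c+d=n-1$ and $d\ge 1$.

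I claim these remaining sub-cases cannot occur, because in each of them $\bm{H}_c^d$ itself admits a transversal Hamilton path: fix the partition $A\cup B$ of the $\bm{H}_c^d$-construction, use the $d\ge 1$ copies of $\ECtwo$ for crossing edges and the $c$ copies of $\ECone$ for internal edges in any compatible alternation pattern (which exists because $|A|,|B|\approx n/2$ and $d\le n-1$, so the runs of $A$- and $B$-vertices can be assembled to match the target sizes). Transferring such a path to the nearby collection $\bm{G}$ would contradict the assumption that $\bm{G}$ has no transversal Hamilton path, so the reduction is complete once this transfer is in place.

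This transfer is the main obstacle. Edit-distance closeness is only a global average: a handful of coordinates $G_i$ may individually be far from their roles, and a fixed template Hamilton path of $\bm{H}_c^d$ cannot be naively realised in $\bm{G}$ because one specified edge per graph is required. To overcome this, I would invoke the regularity-blow-up-plus-absorption framework developed in this paper for Theorem \ref{th:rhc}: the condition $\dD(\bm{G})\ge(\tfrac12-\mu)n$ guarantees that no $G_i$ degenerates at the reduced-graph level, so a rainbow absorbing subpath can be built inside $\bm{G}$ and extended to a full transversal Hamilton path by a near-spanning rainbow path in the remaining coordinates. This is closely parallel to the non-extremal and extremal cases treated en route to Theorem \ref{th:rhc}, with the mild simplification that for paths there is no parity obstruction to track, since the two endpoints are free.
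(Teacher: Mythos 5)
Your augmentation $\bm{G}^+=(G_1,\ldots,G_{n-1},K_V)$ and the equivalence between a transversal Hamilton path in $\bm{G}$ and a transversal Hamilton cycle in $\bm{G}^+$ is exactly the device the paper uses (it sets $G_n:=K_n$ and $\bm{J}:=\bm{G}\cup\{K_n\}$). However, the rest of your plan is a black-box reduction from Theorem~\ref{th:rhc}, and that reduction does not close.

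The gap is precisely the one you flag at the end. Theorem~\ref{th:rhc} only tells you that $\bm{G}^+$ is $\kK'$-close to some $\bm{H}_a^b$ with $b$ odd (or to a half-split collection). To land on the desired conclusion you need $b=1$ with the $K_V$-coordinate matched to the $\ECtwo$-slot, and you propose to rule out the remaining sub-cases by observing that $\bm{H}_c^d$ with $d\ge 1$ has a transversal Hamilton path and then \emph{transferring} that path to $\bm{G}$. But $\kK'$-closeness of graph \emph{collections} is only an aggregate bound on $\sum_i \mathrm{edit}(G_i,H_i)$; it gives no control over which specific edges survive in which colour, and a transversal path requires a designated edge from each coordinate. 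A fixed template path in $\bm{H}_c^d$ has probability roughly $(1-\kK')^{n}$ of surviving naive edits, so the transfer is genuinely non-trivial --- as you acknowledge. Your proposed fix is to ``invoke the regularity-blow-up-plus-absorption framework'', but at that point you are no longer deducing Theorem~\ref{th:rhp} from Theorem~\ref{th:rhc}: you are re-running the entire stable-case and extremal-case machinery, which is what the paper actually does.

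Concretely, the paper does not invoke Theorem~\ref{th:rhc} inside the proof of Theorem~\ref{th:rhp}. Instead it proves both theorems simultaneously in Section~\ref{sec:proof}: it applies Lemma~\ref{lm:rhcstable} to $\bm{J}$, derives the extremal structure~\ref{it:dagger}, and then uses Lemma~\ref{lm:rhc}, whose third alternative is tailored precisely to the path problem --- it shows that if $|\Cols(\ECone)|\geq\eta n$, $|\Cols(\ECtwo)|\geq\eta n$, and $G_n\cong K_n$, then $\bm{J}$ has a transversal Hamilton cycle. The hypothesis $G_n\cong K_n$ is used inside that extremal analysis (to supply the extra internal edge $u_2v_2$ when parity forces one). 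That case is exactly what you cannot extract from the statement of Theorem~\ref{th:rhc} alone; it is the reason the paper proves a sharper three-case lemma (Lemma~\ref{lm:rhc}) rather than chaining the two theorems. So the opening move is right, but the body of your argument would need to be replaced by the unified Lemma~\ref{lm:rhc}-style extremal analysis rather than a transfer lemma.
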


We prove these theorems in a unified manner, combining our newly-developed regularity-blow-up method from~\cite{Cheng4} and the absorption method for transversals,
which uses ideas from the papers~\cite{Cheng2,Cheng3,Cheng1} of the first author.
We give a sketch of the proofs in Section~\ref{sec:sketch}.

\medskip
\noindent
\textbf{Remark.}
In the final stages of preparation of this manuscript, we learned that in the work of Anastos and Chakraborti on robust Hamiltonicity~\cite{AC} mentioned earlier, an auxiliary result on the stability of transversal Hamilton cycles is proved (Theorem~4.4 in~\cite{AC}).
In terms of the focus of this paper, their result is slightly weaker than Theorem~\ref{th:rhc} and states that any graph collection $\bm{G}$ satisfying the conditions of Theorem~\ref{th:rhc} is close to $\bm{H}^b_0 \cup \bm{J}_a$ where $\bm{J}_a$ is a half-split collection of $a=n-b$ graphs, and the common independent set $A$ in $\bm{J}_a$ is one of the parts of the graphs in $H$.
This was sufficient for their main result mentioned earlier on transversal Hamilton cycles in random subgraphs of Dirac graphs.
In turn, their result is slightly stronger than the `stable case' of Theorem~\ref{th:rhc}. This is the part of our proof which uses the regularity-blow-up method and absorption method.
Anastos and Chakraborti instead use iterative absorption in their proof.
Additionally, their result is stronger in another direction: they show that when a collection is far from $\bm{H}^b_0 \cup \bm{J}_a$, not only is there a Hamilton cycle,
there is a certain measure on the set of Hamilton cycles
which in particular implies there are many of them, improving the result of~\cite{BHS}.

\subsection{Notation and organisation}

\textbf{Notation.}
Let $G$ be any graph. We denote its vertex set by $V(G)$ and its edge set by $E(G)$. We write $v(G)=|V(G)|$ and $e(G)=|E(G)|$ for their sizes. Given $v\in V(G)$, the \emph{neighbourhood} $N_G(v)$ of $v$ is the set of vertices that are incident to $v$ and the \emph{degree} of $v$ is $d_G(v):=|N_G(v)|$.
For any $U\subseteq V(G)$, let $G[U]$ be the induced graph of $G$ on $U$, i.e., graph with vertex set $U$ and those edges of $G$ with both endpoints in $U$. Let $G-U:=G[V(G)\sm U]$
and $G-v := G-\{v\}$. 
For each vertex $v\in V(G)$ and subset $U \subseteq V(G)$, let $N_G(v,U)=N_G(v)\cap U$ and $d_G(v,U)=|N_G(v,U)|$. 
Let $E_{G}(X,Y)$ be the set of edges with one endpoint in $X$ and another in $Y$ (so edges in $G[X \cap Y]$ are only counted once) and let $e_G(X,Y)=|E_G(X,Y)|$.
We write $E_G(X) := E_G(X,X) = E(G[X])$.
We write $P=v_1\ldots v_t$ to denote a path of \emph{length $t$},
and will sometimes write $v_1Pv_t$ or $v_1P$ or $Pv_t$ for $P$
if we wish to emphase its endpoint(s).

Given any collection $\bm{G}=(G_c: c \in \Cols)$ of graphs on a common vertex set $V$,
we call $\Cols$ the \emph{colour set} of $\bm{G}$. 
For any two sets $X,Y \subseteq V$, let $E_{\bm{G}}(X,Y)$ be the multiset of edges of any colour with one endpoint in $X$ and another in $Y$ and $e_{\bm{G}}(X,Y)=|E_{\bm{G}}(X,Y)|=\sum_{G \in \bm{G}}e_G(X,Y)$. 
Given any edge-coloured graph $H$ with $V(H) \subseteq V$ with edge-colouring $\sS : E(H) \to \Cols$,
we write $\col(H) := \bigcup_{e \in E(H)}\sS(e)$ for the set of colours used on $H$.

We say a constant $x=a\pm b$ if we have $a-b\leq x\leq a+b$. For any two constants $\aA,\bB \in (0,1)$, we write $\aA \ll \bB$ if there exists function $\aA_0=\aA_0(\bB)$ such that the subsequent arguments hold for all $0<\aA\leq \aA_0$. When we write multiple constants in a hierarchy, we mean that they are chosen from right to left. For any two integers $a \leq b$, let $[a,b]=\{a\leq x\leq b: x\in \mathbb{Z}\}$
and $[a] := [1,a]$.
Given a set $X$ and positive integer $k$, we write
\begin{itemize}
\item $\binom{X}{k}$ for the set of all $k$-subsets of $X$,
\item $X^k$ for the set of all $k$-tuples of elements of $X$,
\item $(X)_k$ for the set of all $k$-tuples of \emph{distinct} elements of $X$.
\end{itemize}
We use script letters e.g.~$\ms{C},\ms{A}$ to denote sets of colours
and bold letters e.g.~$\bm{G},\bm{J}$ to denote graph collections.

\medskip
\noindent
\textbf{Organisation.}
We conclude this section with a sketch of the proofs of Theorems~\ref{th:rhc} and~\ref{th:rhp}.
In Section~\ref{sec:reg}, we introduce our regularity-blow-up method for transversals
and state the definitions and tools we will need later.
The remainder of the paper contains the proofs of our two main results.
We prove them in a unified manner and all of the auxiliary results along the way
concern the setting of Theorem~\ref{th:rhc}, i.e.~collections of $n$ graphs and transversal Hamilton cycles (as opposed to paths).
Section~\ref{sec:extstab} introduces suitable notions of extremality and stability for graphs and graph collections; the proof will be split into a `stable case' and an `extremal case'.
In Section~\ref{sec:absorb}, we prove some results about absorption
that will be used in the proof of the stable case.
Then, in Section~\ref{sec:stable} we show that there is always a transversal Hamilton cycle in this case.
Section~\ref{sec:ext} deals with the extremal case.
We combine the results of Sections~\ref{sec:extstab}--\ref{sec:ext} to give a unified proof of Theorems~\ref{th:rhc} and~\ref{th:rhp} in Section~\ref{sec:proof}.
In Section~\ref{sec:conclude} with finish with some concluding remarks
and a discussion of `exact' results for transversal embedding.

\subsection{Probabilistic tools}

We use the following version of Chernoff's bound:
\begin{lemma}\label{lm:chernoff}
Let $X$ be a random variable with binomial or hypergeometric distribution,
and let $0<\eps<\frac{3}{2}$.
Then
$$
\mathbb{P}[|X-\mb{E}(X)|\geq \eps \mb{E}(X)]\leq 2e^{-\frac{\eps^2}{3}\mb{E}(X)}.
$$
\end{lemma}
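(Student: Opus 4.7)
The plan is to apply the standard Chernoff--Hoeffding method: bound the moment generating function of $X$, apply Markov's inequality to $e^{tX}$ or $e^{-tX}$, and optimise over $t>0$. First I would treat the binomial case, writing $X=\sum_{i=1}^N X_i$ as a sum of independent Bernoulli variables with $\mathbb{P}[X_i=1]=p_i$, and note that $\mathbb{E}[e^{tX_i}]=1+p_i(e^t-1)\leq e^{p_i(e^t-1)}$, so by independence $\mathbb{E}[e^{tX}]\leq e^{\mu(e^t-1)}$ where $\mu:=\mathbb{E}(X)$.

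Next, for the upper tail with $0<\eps<3/2$, Markov's inequality gives
$$\mathbb{P}[X\geq(1+\eps)\mu]\leq e^{-t(1+\eps)\mu}\mathbb{E}[e^{tX}]\leq \exp\bigl(\mu[(e^t-1)-t(1+\eps)]\bigr).$$
Taking $t=\ln(1+\eps)$ produces the classical bound $\bigl(e^{\eps}/(1+\eps)^{1+\eps}\bigr)^{\mu}$; it then remains to verify the elementary inequality $(1+\eps)\ln(1+\eps)-\eps\geq \eps^2/3$ on the interval $0<\eps<3/2$, which follows by comparing the Taylor expansion of the left side term by term, or by checking that the difference of the two sides vanishes at $\eps=0$ with nonnegative derivative up to $\eps=3/2$. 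For the lower tail, the same method with $t<0$ and the estimate $\mathbb{E}[e^{tX}]\leq e^{\mu(e^t-1)}$ gives $\mathbb{P}[X\leq(1-\eps)\mu]\leq\exp(-\eps^2\mu/2)\leq\exp(-\eps^2\mu/3)$ on $0<\eps<1$ (and the event is empty for $\eps\geq 1$ since $X\geq 0$), which is stronger than needed. Summing the two tail bounds contributes the factor of $2$.

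For the hypergeometric case, I would invoke Hoeffding's classical reduction: if $X$ is the number of successes in $N$ draws without replacement from a population with proportion $p$ of successes, then $\mathbb{E}[e^{tX}]$ is dominated by the corresponding binomial moment generating function for all $t\in\mathbb{R}$. Hence the same Chernoff computation applies verbatim, and the identical bound holds.

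The only real obstacle is the numerical verification $(1+\eps)\ln(1+\eps)-\eps\geq \eps^2/3$ on $(0,3/2)$, since the constant $1/3$ is not the sharpest one everywhere; the cleanest route is to define $f(\eps)=(1+\eps)\ln(1+\eps)-\eps-\eps^2/3$, observe $f(0)=0$, and check $f'(\eps)=\ln(1+\eps)-2\eps/3\geq 0$ on this range by another derivative argument (or simply by direct numerical monotonicity, since $f'$ vanishes at $0$ and at $\eps=3/2$ one computes $f'(3/2)=\ln(5/2)-1>0$, with a single interior minimum that one confirms is nonnegative). Since this is entirely routine, I would either cite a standard reference (e.g.~Janson--{\L}uczak--Ruci\'nski or Alon--Spencer) rather than reproduce the calculation, or sketch the two-line calculus check.
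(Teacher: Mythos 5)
This lemma is stated in the paper as a standard, well-known tool and is not proved there, so there is no in-paper argument to compare against; the onus is just on whether your sketch is correct.

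Your overall approach (moment generating function, Markov's inequality at $e^{tX}$, optimize $t$, separate upper and lower tails, Hoeffding's MGF domination for the hypergeometric case) is exactly the standard Chernoff--Hoeffding derivation and is the right route. However, your verification of the key elementary inequality $(1+\eps)\ln(1+\eps)-\eps\geq\eps^2/3$ contains a genuine error. You define $f(\eps)=(1+\eps)\ln(1+\eps)-\eps-\eps^2/3$ and claim to show $f'(\eps)=\ln(1+\eps)-2\eps/3\geq 0$ on $(0,3/2)$, asserting in particular that $f'(3/2)=\ln(5/2)-1>0$. This is false: $\ln(5/2)\approx 0.916<1$, so $f'(3/2)<0$. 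Moreover $f''(\eps)=1/(1+\eps)-2/3$, which is positive for $\eps<1/2$ and negative for $\eps>1/2$, so $f'$ has a single interior \emph{maximum} (at $\eps=1/2$), not a minimum as you wrote, and $f'$ does change sign and become negative before $\eps=3/2$. Consequently $f$ is not monotone increasing on $(0,3/2)$, and the argument as stated does not establish $f\geq 0$. The inequality you need is nonetheless true; the correct reasoning is that $f(0)=0$, $f$ is unimodal (increasing on $(0,\eps_0)$ and decreasing on $(\eps_0,3/2)$ where $\eps_0$ is the unique root of $f'$ in $(1/2,3/2)$), and $f(3/2)=(5/2)\ln(5/2)-3/2-3/4\approx 0.04>0$, so the minimum of $f$ on $[0,3/2]$ is attained at the endpoint $\eps=0$ and equals $0$. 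Equivalently, one can quote the sharper standard bound $(1+\eps)\ln(1+\eps)-\eps\geq\eps^2/(2+2\eps/3)$, and observe that the right-hand side is at least $\eps^2/3$ precisely when $\eps\leq 3/2$, which explains the cutoff in the statement. With this correction your proof is complete; citing a standard reference for the elementary inequality, as you also suggest, would of course avoid the issue entirely.
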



\subsection{Sketch of the proof of Theorems~\ref{th:rhc} and~\ref{th:rhp}}\label{sec:sketch}

We will define two kinds of stability for a graph family $\bm{G}=(G_1,\ldots,G_n)$ with minimum degree at least $(\frac{1}{2}-o(1))n$ (see Definition~\ref{def:stab}).
We will say that
\begin{itemize}
\item $\bm{G}$ is \emph{strongly stable} if $\bm{G}$ contains many graphs which are not close to containing either $\ECone$ or $\ECtwo$, while 
\item $\bm{G}$ is \emph{weakly stable} if almost all graphs in $\bm{G}$ are close to containing either $\ECone$ or $\ECtwo$, but the vertex partitions associated with these subgraphs are not similar. 
\end{itemize}
We say that $\bm{G}$ is \emph{stable} if it is either strongly stable or weakly stable.
Thus if $\bm{G}$ is not stable, most graphs in $\bm{G}$ are close to containing either $\ECone$ or $\ECtwo$ and their associated vertex partitions are almost the same.  The first part of the proof is to show that if $\bm{G}$ is stable, then $\bm{G}$ contains a transversal Hamilton cycle.
We then deal with the extremal case, where $\bm{G}$ is not stable. 

\medskip
\noindent
{\bf Step 1}. \emph{Build the absorbing cycle when $\bm{G}$ is stable.} (Section~\ref{sec:absorb})

\noindent
We build an `absorbing cycle' $C$ for $\bm{G}$
with the property that $C$ is very small and
there is a set $\mc{A}$ consisting of \emph{some} colour-vertex pairs $(c,v)$ and colour-vertex-vertex triples $(c,u,v)$
such that whenever $\mc{A}_0 \subseteq \mc{A}$ is sufficiently small compared to $|C|$,
 $C$ can \emph{absorb} all of its elements (see Definition~\ref{def:absorb}).
This property implies
\begin{itemize}
\item we can add any $(c,v) \in \mc{A}_0$ to the cycle $C$,
which means there is a new cycle $C'$ with 
colour set $\col(C')=\col(C) \cup \{c\}$
and vertex set $V(C')=V(C) \cup \{v\}$;
and, 
\item for any $(c,u,v) \in \mc{A}_0$, whenever $P$ is a transversal path with endpoints $u,v$, we can add $P$ into $C$ using only the new colour $c$.
\end{itemize}
In fact, when $\bm{G}$ is strongly stable, $\mc{A}$ contains \emph{all} pairs and triples. 
But in general, we need to construct an additional auxiliary set to absorb those colours and vertices which $C$ cannot.
After $C$ is constructed, we delete $C$ and its colour set from $\bm{G}$.

\noindent
{\bf Step 2}. \emph{Use the regularity-blow-up method for transversals to cover with long paths.}

\noindent
We apply the regularity lemma for graph collections (Lemma~\ref{lm:weakregcol}) to $\bm{G}$ and thus get a reduced graph collection $\bm{R}$ that inherits the minimum degree condition of $\bm{G}$. By randomly partitioning its colour set, we obtain two families $\bm{R}^1$ and $\bm{R}^2$, each using about half of the colour clusters. Since $\bm{R}^1$ and $\bm{R}^2$ inherit the original degree condition, we are able to find two almost perfect transversal matchings $M_1$, $M_2$ from $\bm{R}^1$ and $\bm{R}^2$ (see Lemma~\ref{lm:2matching}).
The union of these matchings is `locally' like a Hamilton cycle.
We apply the transversal blow-up lemma (Theorem~\ref{th:blowup}) to obtain almost spanning disjoint transversal paths inside each edge of $M_1 \cup M_2$, which cover almost all the vertices outside $C$. 

\noindent
{\bf Step 3}. \emph{Connect the paths and cover remaining vertices via the absorbing cycle.}

\noindent
The last step is to use the absorbing property of $C$ as well as the auxiliary set to connect all the paths to a transversal Hamilton cycle.

\noindent
{\bf Step 4}. \emph{The extremal case.} (Section~\ref{sec:ext})

\noindent
The remaining case is when $\bm{G}$ is not stable and thus most graphs in $\bm{G}$ are close to containing either $\ECone$ or $\ECtwo$ and their vertex partitions are almost the same partition $V=A \cup B$.
To prove Theorem~\ref{th:rhc}, we need to show that if $\bm{G}$ is not close to a half-split graph collection or close to $\bm{H}_a^b$ for some $a\in [n]$ where $b=n-a$ is odd, then $\bm{G}$ contains a transversal Hamilton cycle. 
For this, we first find a short path that covers atypical vertices and colours, and in some cases, balances the two sides of the partition.
Next, we find partitions $A=A^0 \cup A^{10} \cup A^{11}$
and $B=B^0 \cup B^{10} \cup B^{11}$
whose sizes depend on the exact number of graphs that are close to $\ECone$ and to $\ECtwo$.
The transversal blow-up lemma guarantees that there are spanning transversal paths in each of $\bm{G}[A^0,B^0]$, $\bm{G}[A^{10},A^{11}]$, $\bm{G}[B^{01},B^{11}]$ with disjoint colour sets
and endpoints in any given large subsets.
The structure of $\bm{G}$ allows us to find such paths which can be concatenated into
a transversal Hamilton cycle, proving Theorem~\ref{th:rhc}.
The proof of Theorem~\ref{th:rhp} differs only at the end:
we use the fact that given a collection $\bm{G}$ of $n-1$ graphs on a common vertex set of size $n$,
the $n$-graph collection obtained from adding a complete graph to $\bm{G}$ has a Hamilton cycle
if and only if $\bm{G}$ has a Hamilton path.

\section{The regularity-blow-up method for transversals}\label{sec:reg}

In this section, we introduce the tools developed in our paper~\cite{Cheng4}.
We first define (super)regularity for graph collections.

\begin{defn}[Regularity and superregularity]
Suppose that $\bm{G}$ is a graph collection with colour set $\Cols$, where each $G_c$ is bipartite with parts $V_1,V_2$. We say that
\begin{itemize}
\item $\bm{G}$ is \emph{$(\eps,d)$-regular} if
whenever $V_i' \subseteq V_i$ with $|V_i'| \geq \eps|V_i|$ for $i=1,2$
and $\Cols' \subseteq \Cols$ with $|\Cols'| \geq \eps|\Cols|$,
we have 
$$
\left| \frac{\sum_{c \in \Cols'}e_{G_c}(V_1',V_2')}{|\Cols'||V_1'||V_2'|}
-\frac{\sum_{c \in \Cols}e_{G_c}(V_1,V_2)}{|\Cols||V_1||V_2|} \right| < \eps
$$
and $e_{\bm{G}}(V_1,V_2) \geq d|\Cols||V_1||V_2|$.
\item $\bm{G}$ is \emph{$(\eps,d)$-superregular} if it is $(\eps,d)$-regular and 
$\sum_{c \in \Cols}d_{G_c}(x) \geq d|\Cols||V_{3-i}|$ for all $x \in V_i$ where $i\in[2]$, and
$e(G_c) \geq d|V_1||V_2|$ for all $c \in \Cols$.
\end{itemize}
\end{defn}

Note that if every $G_c$ with $c \in \Cols$ is the same, then $\bm{G}$ is $(\eps,d)$-regular if and only if $G_c$ is $(\eps,d)$-regular; and $\bm{G}$ is $(\eps,d)$-superregular if and only if $G_c$ is $(\eps,d)$-superregular.

\begin{lemma}[Typical vertices and colours \cite{Cheng4}]\label{lm:standard}
Let $0<\eps \ll d \leq 1$, and let $\bm{G}$ be an $(\eps,d)$-regular graph collection with colour set $\Cols$, where each $G_c$ is bipartite with parts $V_1,V_2$. Then the following hold:
\begin{itemize}
\item[(i)] for every $i \in [2]$ and all but at most $\eps|V_i|$ vertices $v \in V_i$ we have $\sum_{c \in \Cols}d_{G_c}(v) \geq (d-\eps)|V_{3-i}||\Cols|$;
\item[(ii)] for all but at most $\eps|\Cols|$ colours $c \in \Cols$ we have $e(G_c) \geq (d-\eps)|V_1||V_2|$.
\end{itemize}
\end{lemma}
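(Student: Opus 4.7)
The plan is to prove each part by contradiction, directly applying the definition of $(\eps,d)$-regularity to the set of ``atypical'' vertices or colours. Since $\bm{G}$ is $(\eps,d)$-regular, the overall density satisfies
$$
\frac{\sum_{c \in \Cols}e_{G_c}(V_1,V_2)}{|\Cols||V_1||V_2|} \;\geq\; d,
$$
so the regularity condition implies that for any sufficiently large subfamily $(V_1', V_2', \Cols')$, the density on that subfamily is at least $d - \eps$. The two statements of the lemma are the natural consequences of this for single-side and single-colour slices.

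For (i), I would fix $i \in [2]$ and set $B := \{v \in V_i : \sum_{c \in \Cols}d_{G_c}(v) < (d-\eps)|V_{3-i}||\Cols|\}$. Assume for contradiction that $|B| > \eps|V_i|$. Then $B$, together with $V_{3-i}$ and the full colour set $\Cols$, meets the size threshold in the definition of $(\eps,d)$-regularity, so
$$
\frac{\sum_{c \in \Cols}e_{G_c}(B,V_{3-i})}{|\Cols||B||V_{3-i}|} \;\geq\; d - \eps.
$$
On the other hand,
$$
\sum_{c \in \Cols}e_{G_c}(B,V_{3-i}) \;=\; \sum_{v \in B}\sum_{c \in \Cols}d_{G_c}(v) \;<\; |B|\,(d-\eps)|V_{3-i}||\Cols|,
$$
which contradicts the previous inequality.

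For (ii), I would set $\Cols' := \{c \in \Cols : e(G_c) < (d-\eps)|V_1||V_2|\}$ and assume $|\Cols'| > \eps|\Cols|$. Then $(V_1,V_2,\Cols')$ satisfies the size threshold of $(\eps,d)$-regularity, giving density at least $d-\eps$, while the defining property of $\Cols'$ yields $\sum_{c \in \Cols'} e_{G_c}(V_1,V_2) < |\Cols'|(d-\eps)|V_1||V_2|$, i.e.\ density strictly less than $d-\eps$, a contradiction.

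I do not anticipate any real obstacle: both parts are essentially bookkeeping on top of the definition of regularity for graph collections, paralleling the classical ``most vertices inherit density'' consequence of the Szemer\'edi regularity lemma. The only thing to be careful about is which of the three ``slots'' (two vertex classes and the colour set) one restricts to atypical elements; in (i) we restrict one vertex class, while in (ii) we restrict the colour class, and in both cases the other two slots are taken to be the full sets so the size threshold is trivially satisfied.
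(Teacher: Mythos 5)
The paper does not prove Lemma~\ref{lm:standard} itself; it imports the statement verbatim from~\cite{Cheng4}. Your argument is correct and is the standard one: in each part you collect the atypical vertices (resp.\ colours) into a set, observe that if it were too large then together with the two unrestricted ``slots'' it would satisfy the size threshold in the definition of $(\eps,d)$-regularity, and derive a contradiction by double-counting $\sum_{c}e_{G_c}$ over the slice, using that the overall density is at least $d$ so the slice density must exceed $d-\eps$.
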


\begin{lemma}[Slicing lemma \cite{Cheng4}]\label{lm:slice}
Let $0<1/n \ll \eps \ll \aA \ll d \leq 1$, and let $\bm{G}$ be a graph collection with colour set $\Cols$, where each $G_c$ is bipartite with parts $V_1,V_2$ each of size at least $n$.
Suppose that $\bm{G}$ is $(\eps,d)$-regular.
Let $V_i' \subseteq V_i$ with $|V_i'| \geq \aA|V_i|$ for $i \in [2]$ and $\Cols' \subseteq \Cols$ with $|\Cols'| \geq \aA|\Cols|$. Then $\bm{G}':=(G_c[V_1',V_2']:c \in \Cols')$
is $(\eps/\aA,d/2)$-regular.
\end{lemma}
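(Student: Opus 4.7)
The plan is straightforward: apply the hypothesized $(\eps,d)$-regularity of $\bm{G}$ twice, once to the bulk triple $(V_1',V_2',\Cols')$ and once to an arbitrary `test' triple $(W_1,W_2,\ms{D})$ of subsets sitting inside the bulk, and combine the two estimates with the triangle inequality. To this end, abbreviate
$$d_0 := \frac{\sum_{c \in \Cols}e_{G_c}(V_1,V_2)}{|\Cols||V_1||V_2|}, \qquad d_1 := \frac{\sum_{c \in \Cols'}e_{G_c}(V_1',V_2')}{|\Cols'||V_1'||V_2'|},$$
so that $d_1$ is precisely the density of $\bm{G}'$ that appears in the definition of regularity for $\bm{G}'$.

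Since $\aA \gg \eps$, the bulk already satisfies $|V_i'| \geq \aA|V_i| \geq \eps|V_i|$ and $|\Cols'| \geq \eps|\Cols|$, so a first application of the $(\eps,d)$-regularity of $\bm{G}$ yields $|d_1 - d_0| < \eps$. In particular $d_1 \geq d_0 - \eps \geq d - \eps \geq d/2$, which is already the density condition required for $(\eps/\aA,d/2)$-regularity of $\bm{G}'$.

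For the deviation clause, take any $W_i \sub V_i'$ with $|W_i| \geq (\eps/\aA)|V_i'|$ for $i \in [2]$ and $\ms{D} \sub \Cols'$ with $|\ms{D}| \geq (\eps/\aA)|\Cols'|$. Then
$$|W_i| \geq (\eps/\aA) \cdot \aA|V_i| = \eps|V_i|, \qquad |\ms{D}| \geq (\eps/\aA) \cdot \aA|\Cols| = \eps|\Cols|,$$
so a second application of the $(\eps,d)$-regularity of $\bm{G}$ gives
$$\left| \frac{\sum_{c \in \ms{D}}e_{G_c}(W_1,W_2)}{|\ms{D}||W_1||W_2|} - d_0 \right| < \eps.$$
Combined with $|d_1 - d_0| < \eps$, the triangle inequality bounds the deviation of the corresponding density in $\bm{G}'$ from $d_1$ by at most $2\eps$, and since $\aA \leq 1/2$ (from $\aA \ll d \leq 1$) we have $2\eps \leq \eps/\aA$, which is exactly the bound demanded by $(\eps/\aA,d/2)$-regularity.

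I do not foresee any real obstacle here: the argument is the standard slicing lemma for regular pairs, transplanted to the graph-collection setting by observing that the colour set plays a role symmetric to the two vertex sides. The only small point of care is that the reference density for $\bm{G}'$ is $d_1$ and not $d_0$, which is why the first application of regularity to the bulk is needed; once this is recorded, the rest is the triangle inequality and the hierarchy $\eps \ll \aA$.
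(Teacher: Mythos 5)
Your argument is correct and is the standard slicing argument: two applications of $(\eps,d)$-regularity of $\bm{G}$ (once to the bulk $(V_1',V_2',\Cols')$ and once to the test triple, both of which clear the $\eps$-fraction threshold because $\eps\le\aA$) plus the triangle inequality give deviation $<2\eps\le\eps/\aA$, while the density bound follows from $d_1\ge d_0-\eps\ge d-\eps\ge d/2$. The paper does not reprove this lemma but imports it from \cite{Cheng4}; your proof is the same routine computation one would expect there, with the only point deserving emphasis being, as you note, that the reference density for $\bm{G}'$ is $d_1$ rather than $d_0$.
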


We use the following regularity lemma for graph collections.

\begin{lemma}[Regularity lemma for graph collections \cite{Cheng4}]\label{lm:weakregcol}
For all integers $L_0 \geq 1$ and every $\eps,\dD>0$, there is an $n_0=n_0(\eps,\dD,L_0)$ such that
for every $d \in [0,1)$ and
every graph collection $\bm{G}=(G_c: c \in \Cols)$ on vertex set $V$ of size $n \geq n_0$ with $\dD n \leq |\Cols| \leq n/\dD$, there exists a partition of $V$ into $V_0,V_1,\ldots,V_L$, of $\Cols$ into $\Cols_0,\Cols_1,\ldots,\Cols_M$ and a spanning subgraph $G'_c$ of $G_c$ for each $c \in \Cols$ such
that the following properties hold:
\begin{enumerate}[(i)]
\item $L_0 \leq L,M \leq n_0$ and $|V_0|+|\Cols_0| \leq \eps n$;
\item $|V_1|=\ldots=|V_L|=|\Cols_1|=\ldots = |\Cols_M| =: m$;
\item $\sum_{c \in \Cols}d_{G'_c}(v) > \sum_{c \in \Cols}d_{G_c}(v)-(3d/\dD^2+\eps)n^2$ for all $v \in V$ and
$e(G'_c) > e(G_c)-(3d/\dD^2+\eps)n^2$ for all $c \in \Cols$;
\item if, for $c \in \Cols$, the graph $G'_c$ has an edge with both vertices in a single cluster $V_i$ for some $i \in[L]$, then $c \in \Cols_0$;
\item for all triples $(\{h,i\},j) \in \binom{[L]}{2} \times [M]$, we have that either $G'_c[V_h,V_i]=\es$ for all $c \in \Cols_j$, or
$\bm{G}'_{hi,j} := (G'_c[V_h,V_i]: c \in \Cols_j)$ is $(\eps,d)$-regular.
\end{enumerate}
\end{lemma}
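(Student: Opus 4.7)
The plan is to adapt Szemer\'edi's proof of the classical regularity lemma to the setting of graph collections, by introducing a mean-square density index that is summed jointly over pairs of vertex clusters and colour clusters. The strategy is a standard defect-increment argument: start with an equitable pair of partitions of $V$ and $\Cols$, define an index which is bounded above and which strictly increases whenever we refine along witnesses for irregularity, then iterate until only a small fraction of triples is irregular.

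First, I would begin with equitable partitions $\mc{P}_0$ of $V$ and $\mc{Q}_0$ of $\Cols$ into $L_0$ parts each, absorbing any remainders into small exceptional parts $V_0$ and $\Cols_0$. For partitions $\mc{P} = (V_1,\ldots,V_L)$ of $V$ and $\mc{Q} = (\Cols_1,\ldots,\Cols_M)$ of $\Cols$, set
$$
\iI(\mc{P},\mc{Q}) \;:=\; \sum_{\{h,i\} \in \binom{[L]}{2}} \sum_{j \in [M]} \frac{|V_h||V_i||\Cols_j|}{|V|^2|\Cols|}\, d_{hi,j}^2,
$$
where $d_{hi,j} := e_{\bm{G}}(V_h,V_i,\Cols_j)/(|V_h||V_i||\Cols_j|)$ with $e_{\bm{G}}(V_h,V_i,\Cols_j) := \sum_{c \in \Cols_j} e_{G_c}(V_h,V_i)$, so that $0 \leq \iI \leq 1$. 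A defect-Cauchy-Schwarz argument applied jointly over $V_h \times V_i \times \Cols_j$ shows that whenever the triple $(\{V_h,V_i\},\Cols_j)$ fails to be $(\eps,d)$-regular, with witnesses $V_h' \subseteq V_h$, $V_i' \subseteq V_i$, $\Cols' \subseteq \Cols_j$, refining $V_h,V_i,\Cols_j$ along these witnesses contributes an extra $\eps^4 \cdot |V_h||V_i||\Cols_j|/(|V|^2|\Cols|)$ to the index.

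Next, I would iterate. If the weighted fraction of triples failing $(\eps,d)$-regularity exceeds $\eps$, simultaneously refine $\mc{P}$ along all vertex-side witnesses $V_h'$ and $\mc{Q}$ along all colour-side witnesses $\Cols'$. Since refining either marginal partition only increases the contribution of every triple (by convexity of $x \mapsto x^2$), the individual gains add up and $\iI$ grows by at least $\eps^5$ per round. As $\iI \leq 1$, the iteration terminates after at most $\eps^{-5}$ rounds, producing partitions with $L,M$ bounded by a tower function of $\eps^{-1}$ and $L_0$; an equitising step then matches all cluster sizes to a common value $m$, moving any leftover vertices into $V_0$ and colours into $\Cols_0$. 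The hypothesis $\dD n \leq |\Cols| \leq n/\dD$ ensures that one can choose $m$ simultaneously for both partitions while keeping $|V_0|+|\Cols_0| \leq \eps n$, giving (i) and (ii).

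Finally, I would define $G'_c$ by deleting from $G_c$ every edge lying inside a single $V_i$ with $i \in [L]$ (unless $c \in \Cols_0$), every edge in a triple $(\{V_h,V_i\},\Cols_j)$ that is not $(\eps,d)$-regular, and every edge in a triple whose density is below $d$; in the latter two cases I set $G'_c[V_h,V_i] = \es$ entirely for all $c \in \Cols_j$, which gives (iv) and (v). The per-vertex and per-colour edge loss is then bounded by the sum of a low-density contribution of order $d n^2 / \dD^2$, an irregular-triple contribution of order $\eps n^2$, and an $O(n^2/L)$ diagonal contribution absorbed into the $\eps$-error, yielding (iii). The principal obstacle is the joint refinement step: one must check that the defect gains from witnesses originating in many different bad triples accumulate correctly when both partitions are refined simultaneously. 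This is exactly where the convexity observation above is essential, and it is also what forces the common-size condition in (ii) to be handled by an equitising post-processing step rather than built into the iteration itself.
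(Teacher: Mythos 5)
You should first note that this paper does not prove Lemma~\ref{lm:weakregcol} at all: it is imported from~\cite{Cheng4}, and the authors describe it (together with the surrounding lemmas) as a convenient restatement of \emph{weak} regularity for $3$-uniform hypergraphs specialised to the transversal setting. Your plan --- a mean-square density index over pairs $(\{V_h,V_i\},\Cols_j)$, a defect Cauchy--Schwarz increment when a triple fails $(\eps,d)$-regularity, iteration until the weighted measure of bad triples is below $\eps$, then equitising and edge-deletion --- is exactly the standard proof of that weak hypergraph regularity statement, so the skeleton of your argument is the right one and matches the route the paper leans on (minor quibbles: the increment per bad triple is of order $\eps^5$ rather than $\eps^4$ times its weight, and deferring all equitising to a single post-processing step needs a slicing argument to preserve property (v); the usual remedy is to keep the partitions equitable throughout the iteration).

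The genuine gap is in your derivation of property (iii), the per-vertex and per-colour ``degree form''. Your accounting only controls \emph{aggregate} deletions: the iteration bounds the total weighted measure of irregular triples by $\eps$, but those triples can concentrate entirely on one vertex cluster or one colour cluster, so a fixed vertex $v$ (or a fixed colour $c$) may lose far more than an $\eps n^2$ share. Likewise, within a low-density triple $(\{V_h,V_i\},\Cols_j)$ the density is an average over $V_h\times V_i\times\Cols_j$; a single vertex $v\in V_h$ can be joined to all of $V_i$ in every colour of $\Cols_j$ even though the triple's density is below $d$, and a single colour $c\in\Cols_j$ can carry essentially all the triple's edges. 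So ``per-vertex loss of order $dn^2/\dD^2+\eps n^2$'' does not follow from the bounds you have; naively the loss can be as large as $n|\Cols|\le n^2/\dD$. The standard repair is an explicit cleaning step that your sketch omits: run the argument with smaller auxiliary parameters, then move into $V_0$ (resp.\ $\Cols_0$) every vertex cluster and colour cluster that meets too many irregular triples, and every individual vertex (resp.\ colour) that is atypical in too many of the surviving triples, re-trim the clusters to a common size, and only then verify (iii); note also that the intra-cluster deletions at a vertex, summed over all $|\Cols|\le n/\dD$ colours, already force $m\le\eps\dD n$, which is where the dependence on $\dD$ in the error term enters. Since (iii) is precisely what the paper later uses (e.g.\ for degree inheritance in Lemma~\ref{lm:inherit}), this step needs to be written out rather than asserted.
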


The sets $V_i$ are called \emph{vertex clusters} and the sets $\Cols_j$ are called \emph{colour clusters}, while $V_0$ and $\Cols_0$ are the \emph{exceptional} vertex and colour sets respectively.

\begin{defn}[Reduced graph collection]
Given a graph collection $\bm{G}=(G_c: c \in \Cols)$ on $V$ and parameters $\eps>0, d \in [0,1)$ and $L_0 \geq 1$, the \emph{reduced graph collection} $\bm{R}=\bm{R}(\eps,d,L_0)$ of $\bm{G}$ is defined as follows.
Apply Lemma~\ref{lm:weakregcol} to $\bm{G}$ with parameters $\eps,\dD,d,L_0$
to obtain $\bm{G}'$ and a partition $V_0,\ldots,V_L$ of $V$ and $\Cols_0,\ldots,\Cols_M$ of $\Cols$ where
$V_0$, $\Cols_0$ are the exceptional sets and $V_1,\ldots,V_L$ are the vertex clusters
and $\Cols_1,\ldots,\Cols_M$ are the colour clusters.
Then
$\bm{R}=(R_1,\ldots,R_M)$ is a graph collection of $M$ graphs each on the same vertex set $[L]$,
where, for $(\{h,i\},j) \in \binom{[L]}{2} \times [M]$, we have $hi \in R_j$ whenever $\bm{G}'_{hi,j}$ is $(\eps,d)$-regular.
\end{defn}

The next lemma
states that clusters inherit a minimum degree bound in the reduced graph from $G$.

\begin{lemma}[Degree inheritance \cite{Cheng4}]\label{lm:inherit}
Suppose $L_0 \geq 1$ and $0 < 1/n \ll \eps \leq d \ll \dD,\gG,p \leq 1$.
Let $\bm{G}=(G_c: c \in \Cols)$ be a graph collection on a vertex set $V$ of size $n$ with $\dD(G_c) \geq (p+\gG)n$ for all $c \in \Cols$ and $\dD n \leq |\Cols| \leq n/\dD$.
Let $\bm{R}=\bm{R}(\eps,d,L_0)$ be the reduced graph collection of $\bm{G}$ on $L$ vertices with $M$ graphs.
Then
\begin{enumerate}[(i)]
\item for every $i \in [L]$ there are at least $(1-d^{1/4})M$ colours $j \in [M]$ for which $d_{R_j}(i) \geq (p+\gG/2)L$;
\item for every $j \in [M]$ there are at least $(1-d^{1/4})L$ vertices $i \in [L]$ for which $d_{R_j}(i) \geq (p+\gG/2)L$.
\end{enumerate}
\end{lemma}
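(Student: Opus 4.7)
The plan is to prove both (i) and (ii) by a single ``lost edges'' double counting. Focusing on (i), fix $i \in [L]$ and call $j \in [M]$ \emph{bad} if $d_{R_j}(i) < (p+\gamma/2)L$; I will bound the number of bad $j$ by $d^{1/4}M$. For each bad $j$, many clusters $h$ satisfy $ih \notin R_j$, and property (v) of Lemma~\ref{lm:weakregcol} forces every edge in $E_{G_c}(V_i,V_h)$ to be absent from $G'_c$ for every $c \in \Cols_j$. Let $L_j^{(i)} := \sum_{c \in \Cols_j}\sum_{h:\, ih \notin R_j} e_{G_c}(V_i, V_h)$ denote the total number of such removed edges. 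My strategy is to lower bound $L_j^{(i)}$ for each bad $j$, upper bound $\sum_{j} L_j^{(i)}$ globally, and combine.

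First I would lower bound $L_j^{(i)}$ for bad $j$. For each $c \in \Cols_j$, the minimum degree condition gives $e_{G_c}(V_i, V\setminus V_i) \geq m(p+\gamma)n - m^2$. Of these edges, at most $m\eps n$ land in $V_0$ and at most $d_{R_j}(i)\,m^2$ land in clusters $V_h$ with $ih \in R_j$; the rest contribute to $L_j^{(i)}/|\Cols_j|$. Summing over $c \in \Cols_j$ and using $d_{R_j}(i)\,m \leq (p+\gamma/2)n$ for bad $j$ (which follows from $mL \leq n$), I obtain $L_j^{(i)} \geq (\gamma/3)\,m^2 n$, provided $\eps$ and $1/L$ are small enough compared with $\gamma$.

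Next I would upper bound $\sum_j L_j^{(i)}$. Property (iii) of Lemma~\ref{lm:weakregcol} gives $\sum_c [d_{G_c}(v) - d_{G'_c}(v)] < (3d/\dD^2 + \eps)n^2$ for every $v$. Summing over $v \in V_i$ bounds the total number of (colour, edge) incidences lost passing from $G$ to $G'$ with an endpoint in $V_i$, and every contribution to $\sum_j L_j^{(i)}$ is such a loss; hence $\sum_j L_j^{(i)} \leq m(3d/\dD^2+\eps)n^2$. Combining the two inequalities,
\[
|B|\,(\gamma/3)\,m^2 n \;\leq\; m(3d/\dD^2+\eps)n^2,
\]
so $|B| = O\!\bigl((d/\dD^2+\eps)\,L/\gamma\bigr)$. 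Since $L/M \leq 1/\dD$ (from $mL \leq n$ and $mM \geq (1-\eps)\dD n$), this becomes $|B| \leq O((d/\dD^3 + \eps/\dD)M/\gamma)$, which is at most $d^{1/4}M$ thanks to the hierarchy $\eps \leq d \ll \dD, \gamma, p$. The proof of (ii) is entirely analogous: fix $j$, declare $i$ bad if $d_{R_j}(i) < (p+\gamma/2)L$, obtain the same per-bad-$i$ lower bound on removed edges, and use the \emph{per-colour} loss bound $e(G_c) - e(G'_c) < (3d/\dD^2+\eps)n^2$ summed over $c \in \Cols_j$ (together with $e(G_c) \geq (p+\gamma)n^2/2$) to obtain the corresponding upper bound and conclude.

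The main obstacle is locating the right notion of ``loss'': a naive averaging on $\sum_j d_{R_j}(i)$ only yields $|B| \leq M(1-p-\gamma+o(1))/(1-p-\gamma/2)$, which is not small in general unless $p+\gamma$ is very close to $1$. The key observation is that a bad $j$ does not merely cost a single unit of degree, but eliminates an entire block of $\Omega(\gamma\,m^2 n)$ edges from the partition refinement; summing these large removals across bad $j$ conflicts with the quantitative bound in (iii), and this is what produces the $d^{1/4}$ bound.
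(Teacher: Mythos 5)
This lemma is imported from~\cite{Cheng4}, so the present paper contains no proof to compare against; I can only assess correctness. Your lost-edges double count is the standard route to degree inheritance from a regularity partition, and it is sound. The key steps are all right: if $ih\notin R_j$ then property~(v) of Lemma~\ref{lm:weakregcol} forces $G'_c[V_i,V_h]=\es$ for every $c\in\Cols_j$, so all of $E_{G_c}(V_i,V_h)$ is deleted; hence each bad colour cluster $j$ costs $L_j^{(i)}\geq (\gG/3)m^2 n$ deleted edge--colour incidences; property~(iii), summed over $v\in V_i$ for part~(i) (resp.\ over $c\in\Cols_j$ for part~(ii)), caps the total at $O((d/\dD^2)mn^2)$; and the conversions $n/m\leq L/(1-\eps)$ and $L\leq M/(\dD-\eps)$ give ``number of bad indices'' $=O\bigl((d/\dD^3)M/\gG\bigr)\leq d^{1/4}M$ under the hierarchy. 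Your remark that naive averaging of $\sum_j d_{R_j}(i)$ fails and that the quadratic-in-$m$ loss per bad cluster is the real engine is exactly the right diagnosis.

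One bookkeeping point worth spelling out: your lower bound $L_j^{(i)}\geq(\gG/3)m^2 n$ needs $m+\eps n\leq(\gG/6)n$, i.e.\ $1/L+\eps\leq\gG/6$, so $L$ (hence $L_0$) must exceed a constant depending on $\gG$. The statement as transcribed here only assumes $L_0\geq 1$, and is literally false if the regularity lemma returns $L=1$ (then every $R_j$ is edgeless). Since the hierarchy $1/L_0\ll\eps$ is imposed wherever the lemma is applied (see e.g.\ Lemma~\ref{lm:rhcstable}), this is a defect in the transcription of the hypothesis rather than a gap in your argument.
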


All of the above definitions and lemmas are merely convenient restatements 
of `weak regularity' for $3$-uniform hypergraphs, specialised to the transversal setting.
However, the next lemma, our transversal blow-up lemma, which was the main result of~\cite{Cheng4},
is a non-trivial tool which we will use in the proofs of Theorems~\ref{th:rhc}
and~\ref{th:rhp}.

An $n$-vertex graph $H$ is \emph{$\mu$-separable} if there is $X \subseteq V(H)$ of size at most $\mu n$ such that $H-X$ consists of components of size at most $\mu n$.
This class of graphs includes many natural graphs including $F$-factors, $2$-regular graphs and powers of Hamilton cycles. 
In this paper, we will only use it for Hamilton paths.
Thus it suffices to state a simplified version of the main result of~\cite{Cheng4}.

\begin{theo}[Transversal blow-up lemma \cite{Cheng4}]\label{th:blowup}
Let $0 < 1/n \ll \eps,\mu,\aA, \ll \nu,d,\dD,1/\DD \leq 1$.
Let $\Cols$ be a set of at least $\dD n$ colours and let $\bm{G} = (G_c: c \in \Cols)$ be a collection of bipartite graphs with the same vertex partition $V_1,V_2$,
where $n \leq |V_1|\leq|V_2|\leq n/\dD$, such that
$\bm{G}$ is $(\eps,d)$-superregular.
Let $H$ be a $\mu$-separable bipartite graph with parts $A_1,A_2$ of sizes $|V_1|,|V_2|$ respectively, and $|\Cols|$ edges and maximum degree $\DD$.
Suppose further that, for $i=1,2$, there is a set $U_i \subseteq A_i$ with $|U_i| \leq \aA|A_i|$
and for each $x \in U_i$, a \emph{target} set $T_x \subseteq V_i$ with $|T_x| \geq \nu|V_i|$.
Then $G$ contains a transversal copy of $H$ such that for $i=1,2$, every $x \in U_i$
is embedded inside $T_x$.
\end{theo}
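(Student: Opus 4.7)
My plan is to adapt the classical Komlós--Sárközy--Szemerédi blow-up lemma to the transversal setting by treating the color assignment as a parallel embedding problem. A convenient viewpoint is to work with the auxiliary 3-partite 3-uniform hypergraph $\mc{H}$ on parts $V_1,V_2,\Cols$ whose hyperedges are the triples $(u,v,c)$ with $uv\in E(G_c)$: superregularity of $\bm{G}$ translates into a quasirandomness property of $\mc{H}$, and a transversal copy of $H$ corresponds to choosing, for each edge of $H$, one such triple so that the $V_i$-projections form an ordinary copy of $H$ and the $\Cols$-projections are pairwise distinct. Throughout the argument I would maintain, for every unembedded vertex $x\in V(H)$, a candidate vertex list $C_x\sub V_i$, and, for every still-unassigned edge $e\in E(H)$, a candidate color list $\ms{C}_e\sub \Cols$, and show both remain polynomially large after each step using Chernoff bounds (Lemma~\ref{lm:chernoff}) together with superregularity.

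First, apply Lemma~\ref{lm:slice} to reserve small random buffer subsets $B_i\sub V_i$ and $\ms{B}\sub\Cols$ for the final completion phase, chosen so that the restricted system stays $(\eps',d')$-superregular and every vertex has many buffer-partners in many colors (and vice versa). Vertices $x\in U_i$ with prescribed target sets are handled with priority, initialising $C_x:=T_x$ and using Lemma~\ref{lm:slice} to check that restriction to $T_x$ preserves regularity. By $\mu$-separability, fix $X\sub V(H)$ with $|X|\leq \mu v(H)$ whose removal leaves components of size at most $\mu v(H)$, and embed $X$ vertex by vertex in a random greedy fashion, simultaneously picking a random image in the current candidate list and a random color for each new edge. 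Because $|X|$ is small and $\DD$ is bounded, a standard union-bound over vertices and colors shows the updated candidate lists for the rest of $H$ remain large with high probability. Then embed the small components of $H-X$ one at a time by the same random greedy procedure; since each component has size at most $\mu v(H)$, its effect on the remaining candidate lists is small, and superregularity keeps list sizes large apart from an $o(1)$-fraction of ``bad'' vertices and colors carried to the end. A Hall/König-type matching argument on two bipartite candidacy graphs (between leftover $H$-vertices and $B_i$, and between leftover edges and $\ms{B}$) finishes the embedding.

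The main obstacle is the coupling between vertex placement and color assignment: placing one vertex in $V_i$ simultaneously constrains many candidate color lists, while using one color forbids it on every other edge of $H$, so the concentration argument must track both quantities jointly and guard against rare events that kill a candidate list from either side. Decoupling these two sources of constraint is the technical heart of the argument; I would handle it by carefully ordering the random choices into alternating vertex-choice and color-choice sub-phases (so that at each step only one of the two lists is being depleted and Chernoff applies cleanly), or, alternatively, by first exposing a random ``color skeleton'' $\sigma:E(H)\to\Cols$ and then invoking the classical blow-up lemma on the resulting monochromatic pattern after verifying that $\sigma$ preserves the relevant superregularity with high probability.
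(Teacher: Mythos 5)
First, note that the theorem you are asked about is not proved in this paper at all: it is imported wholesale from \cite{Cheng4}, where it is the main result, so there is no in-paper proof to compare against and your proposal must be judged as a standalone argument. As it stands it has a genuine gap. The difficulty you yourself flag as the ``technical heart'' --- the coupling between vertex placement and colour assignment --- is essentially the content of the theorem, and neither of the two routes you offer for resolving it is carried out, and one of them is unsound. The colour-skeleton alternative fails at the start: $(\eps,d)$-superregularity of the collection $\bm{G}$ is an averaged condition over large sets of colours and vertices, and it gives no regularity, no minimum degree, and nothing beyond $e(G_c)\geq d|V_1||V_2|$ for any \emph{individual} $G_c$; a single colour graph may be wildly irregular (all of its edges concentrated on a skewed vertex subset, say). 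Hence after exposing a random $\sigma:E(H)\to\Cols$ there is no single (super)regular pair to which the classical blow-up lemma could be applied --- each edge of $H$ must land in its own, possibly pathological, graph $G_{\sigma(e)}$ --- and showing that a typical $\sigma$ admits such an embedding is precisely the statement being proved, not a consequence of Koml\'os--S\'ark\"ozy--Szemer\'edi.

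The alternating random-greedy route is closer to what an actual proof must do, but the sketch leaves exactly the hard estimates unproved. Since $H$ has exactly $|\Cols|$ edges, every colour must be used exactly once, so the colour side is a \emph{perfect} matching between $E(H)$ and $\Cols$; a greedy phase plus a Hall-type completion over a reserved buffer $\ms{B}$ requires a verified expansion (defect Hall) condition for the edge--colour candidacy graph, and the candidate colours of an edge depend on the images of \emph{both} of its endpoints, which were themselves chosen using colour information. Chernoff bounds and a union bound over the at most $\mu v(H)$ separator vertices do not by themselves control this: the relevant quantities are the pair lists $\{c\in\Cols: uv\in E(G_c)\}$ for the specific pairs your embedding creates, and collection-superregularity only controls these in aggregate, so one must additionally show the embedding avoids the atypical pairs and colours --- an ingredient with no analogue in the classical blow-up lemma and not supplied here. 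The overall programme (candidate lists for vertices and for colours, buffers, a final matching argument) is reasonable and broadly in the spirit of how such results are proved in \cite{Cheng4}, but as written it is an outline whose decisive steps are either missing or, in the skeleton variant, incorrect.
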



\section{Extremality and stability}\label{sec:extstab}

\subsection{Extremal and stable graphs}

In this section, we define extremality for a single graph.
A graph $G$ is not extremal if any two half-sized sets have many edges between them.

\begin{defn}[nice, extremal]
Let $G$ be a graph on a vertex set $V$ of size $n$ and let $\eps>0$. We say that
\begin{itemize}
\item $G$ is $\eps$-\emph{nice} if for any two 
sets $A,B \subseteq V$ of size at least $(\frac{1}{2}-\eps)n$, we have $e_G(A,B) \geq \eps n^2$.
\item $G$ is \emph{$\eps$-extremal} if it  is not $\eps^3$-nice. 
\end{itemize}
\end{defn}
Note that whenever $\eps' \geq \eps > 0$, an $\eps'$-nice graph is $\eps$-nice 
and hence an $\eps$-extremal graph is $\eps'$-extremal.

The following is a version of a well-known fact about the structure of almost Dirac graphs
which forms the basis of our extremal case distinction (for example, see~\cite{KLS}).

\begin{lemma}\label{lm:char}
Suppose that $0 < 1/n \ll d \ll \eps \leq 1$.
Let $G$ be a graph on a vertex set $V$ of size $n$ with $d_G(x) \geq (\frac{1}{2}-\eps^3)n$
for all but at most $d n$ vertices $x \in V$
which is $\eps$-extremal. 
Then there is a \emph{characteristic partition} $(A,B,C)$ of $G$ such that
\begin{itemize}
\item[(i)] $A,B,C$ partition $V$;
\item[(ii)] $|A|=|B|= (\frac{1}{2}-\eps)n$;
\item[(iii)] one of the following holds:
\begin{itemize}[$\bullet$]
\item $d_G(a,A)\geq (\frac{1}{2}-2\eps)n$ for all $a \in A$ and $d_G(b,B)\geq (\frac{1}{2}-2\eps)n$ for all $b \in B$ and $e_G(A,B) \leq \eps n^2$; here we say that $G$ is \emph{$(\eps,\ECone)$-extremal};
\item $d_G(a,B)\geq (\frac{1}{2}-2\eps)n$ for all $a\in A$ and $d_G(b,A)\geq (\frac{1}{2}-2\eps)n$ for all $b \in B$, and either $e_G(A) \leq \eps n^2$ or $e_G(B) \leq \eps n^2$; here we say that $G$ is \emph{$(\eps,\ECtwo)$-extremal}.
\end{itemize}
\end{itemize}
\end{lemma}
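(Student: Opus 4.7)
The plan is to extract an almost-partition from a witness of non-niceness, establish a dichotomy on the overlap size that excludes intermediate regimes, and refine to meet the exact sizes and degree conditions. By $\eps$-extremality there exist $A_0, B_0 \sub V$ with $|A_0|,|B_0|\geq(\frac{1}{2}-\eps^3)n$ and $e_G(A_0,B_0)<\eps^3 n^2$; after trimming I may assume equality in the size bounds. Write $S:=A_0\cap B_0$, $A_1:=A_0\setminus B_0$, $B_1:=B_0\setminus A_0$, $V_0:=V\setminus(A_0\cup B_0)$, and $s:=|S|$. Summing $d_G(v)\geq(\frac{1}{2}-\eps^3)n$ over $v\in S$ (ignoring the $\leq dn$ low-degree exceptions) against the upper bound $\sum_{v\in S}d_G(v,V\setminus S)\leq e_G(S,A_1)+e_G(S,B_1)+e_G(S,V_0)\leq 2\eps^3 n^2+s|V_0|$, and using $|V_0|\leq 2\eps^3 n+s$ from the trimmed sizes, yields a quadratic inequality in $s/n$ whose interior roots are forbidden. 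This forces either $s\leq O(\eps^3)n$ (Case I) or $s\geq(\frac{1}{2}-O(\eps^3))n$ (Case II).

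In Case I, the sets $A_1, B_1$ are nearly-disjoint half-sized subsets with few edges between. Let $A^*:=\{v\in A_1:d_G(v)\geq(\frac{1}{2}-\eps^3)n,\ d_G(v,B_1)\leq\eps^{3/2}n\}$ and define $B^*$ analogously; Markov on $e_G(A_1,B_1)$ together with the $dn$-bound on low-degree vertices gives $|A^*|,|B^*|\geq(\frac{1}{2}-\eps)n$. Pick $A\sub A^*$ and $B\sub B^*$ of size exactly $(\frac{1}{2}-\eps)n$; for $a\in A$, decomposing $V=A\cup(A_1\setminus A)\cup B_1\cup S\cup V_0$ and using $|A_1\setminus A|\leq\eps n$ and $|S|+|V_0|\leq O(\eps^3)n$ gives $d_G(a,A)\geq(\frac{1}{2}-2\eps)n$, while $e_G(A,B)\leq e_G(A_0,B_0)\leq\eps n^2$ is immediate.

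In Case II, a symmetric upper-bound argument also forces $s\leq(\frac{1}{2}+O(\eps^3))n$, so $|V\setminus S|$ is within $O(\eps^3)n$ of $n/2$, and $e_G(S)<\eps^3 n^2$ makes $S$ almost independent. The sharp bound $e_G(S,V\setminus S)\geq s(\frac{1}{2}-\eps^3)n-2e_G(S)-dn^2\geq(\frac{1}{4}-O(\eps^3))n^2$, obtained directly from the min-degree sum (not through a per-vertex Markov on $S$), combined with $|S||V\setminus S|\leq n^2/4$, gives $\sum_{v\in V\setminus S}(|S|-d_G(v,S))\leq O(\eps^3)n^2$; hence all but $O(\eps^2)n$ vertices of $V\setminus S$ satisfy $d_G(v,S)\geq|S|-\eps n$. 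Setting $A^*:=\{v\in S:d_G(v)\geq(\frac{1}{2}-\eps^3)n,\ d_G(v,S)\leq\eps^{3/2}n\}$ and $B^*:=\{v\in V\setminus S:d_G(v,S)\geq|S|-\eps n\}$, both of size $\geq(\frac{1}{2}-\eps)n$, I pick $A\sub A^*$ and $B\sub B^*$ of size exactly $(\frac{1}{2}-\eps)n$, so that $e_G(A)\leq e_G(S)\leq\eps n^2$, and for $a\in A$, $b\in B$,
\[
d_G(a,B)\geq(\tfrac{1}{2}-\eps^3-\eps^{3/2})n-|(V\setminus S)\setminus B|\geq(\tfrac{1}{2}-2\eps)n, \quad d_G(b,A)\geq d_G(b,S)-|S\setminus A|\geq|A|-\eps n=(\tfrac{1}{2}-2\eps)n.
\]

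The main obstacle is Case II: fitting the bad vertices of $V\setminus S$ into the $2\eps n$-budget of $C$ requires the sharp $O(\eps^3)n^2$ error in the edge-count above; a weaker first-moment bound of $O(\eps^{3/2})n^2$ (which one would obtain by first applying Markov to isolate the $S$-good vertices) would produce $O(\eps^{1/2})n$ bad vertices---far too many to fit.
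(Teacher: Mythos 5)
Your proof is correct, and it follows the same overall strategy as the paper (extract a witness pair $(A_0,B_0)$ of non-niceness, split on the size of the overlap $S=A_0\cap B_0$, then refine), but the execution differs in a few genuinely interesting ways.

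\textbf{Dichotomy on $|S|$.} You establish that $s=|S|$ must lie outside the middle range by deriving a quadratic inequality $\sigma^2-\sigma(\frac{1}{2}-3\eps^3)+O(\eps^3+d)\geq 0$ in $\sigma=s/n$, whose roots are $O(\eps^3)$ and $\frac{1}{2}-O(\eps^3)$. This gives both regimes in one shot. The paper instead splits at the threshold $|U|\geq 2\sqrt{\mu}\,n$ vs.\ $|U|<2\sqrt{\mu}\,n$ (with $\mu=\eps^3$ and $U=X\cap Y$), and in the large case it argues \emph{indirectly} that $|U|$ is in fact close to $n/2$: it shows a non-empty subset $U\setminus U_0$ of $U$ consists of vertices with $\geq(\frac{1}{2}-3\sqrt{\mu})n$ neighbours in $D:=V\setminus(X\cup Y)$, forcing $|D|\geq(\frac{1}{2}-3\sqrt{\mu})n$ and hence $|U|\geq(\frac{1}{2}-5\sqrt{\mu})n$. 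Your quadratic route is tidier and gives the tighter $O(\eps^3)n$ window directly. One small consequence: the two approaches also operate at different error scales in the large-overlap case ($O(\eps^3)$ for you, $O(\eps^{3/2})$ for the paper); both comfortably fit the $|C|=2\eps n$ budget.

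\textbf{Choice of partition in the large-overlap case.} You construct $A\subseteq S$ and $B\subseteq V\setminus S$, so you must show almost every vertex of $V\setminus S$ sees almost all of $S$. As you correctly flag, the subtle point is to subtract $2e_G(S)=O(\eps^3)n^2$ \emph{once} from the aggregate degree sum rather than subtract $\eps^{3/2}n$ per vertex of $S$; the latter would inflate the error to $O(\eps^{3/2})n^2$ and, via Markov at threshold $\eps n$, give $O(\eps^{1/2})n$ bad vertices, which is too many. Your fix is correct. The paper sidesteps this by working with $D=V\setminus(X\cup Y)$ rather than $V\setminus S$: since $D$ is automatically near-half-sized once $U$ is large, a single double-count of $e_G(U\setminus U_0,D)$ bounds the bad set $D_0$ with only $O(\sqrt{\mu})n$ exceptions. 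Both approaches succeed; yours is more symmetric and avoids introducing the intermediate set $U_0$, at the cost of needing the sharper edge-count.

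\textbf{Low-degree vertices.} The paper handles the $\leq dn$ low-degree vertices by first reducing to the case $\delta(G)\geq(\frac{1}{2}-\eps^3)n$ (proving the conclusion with $\eps/2$, then putting the bad vertices into $C$). You handle them inline, subtracting $O(dn^2)$ from the degree sums and excluding them from $A^*,B^*$; since $d\ll\eps$ this absorbs cleanly. This is a cosmetic difference.

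In short: same skeleton, but your global quadratic dichotomy and your $S$ vs.\ $V\setminus S$ decomposition with the sharp edge-count are a legitimate alternative to the paper's $U_0/D_0$ bookkeeping. Both are correct.
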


\begin{proof}
By adding vertices with small degree to $C$, it suffices to prove that if $G$ has $\dD(G) \geq (\frac{1}{2}-\eps^3)n$,
then the conclusion holds with $\eps/2$ in place of $\eps$.
Let $\mu := \eps^3$.
Since $G$ is $\eps$-extremal, there are $X,Y \subseteq V$ each of size at least $(\frac{1}{2}-\mu)n$ such that $e_G(X,Y) < \mu n^2$.
Let $U:=X\cap Y$ and $D:=V\sm(X\cup Y)$. We divide the proof into two cases based on the size of $U$.

\medskip
\noindent
\emph{Case 1}. $|U|\geq 2\sqrt{\mu} n$.
Let $U_0 := \{u \in U: d_G(u,D) \leq (\frac{1}{2}-3\sqrt{\mu})n\}$.
If $|U_0|> \sqrt{\mu}n$, then since $d_G(v,X\cup Y)\geq 2\sqrt{\mu}n$ for each $v\in U_0$, we have 
$e_G(X,Y) \geq e_G(U,X \cup Y) \geq \frac{1}{2}\cdot 2\sqrt{\mu}n\cdot \sqrt{\mu}n=\mu n^2$, a contradiction.
Thus we have $|U_0|\leq \sqrt{\mu}n$ and hence $|U\sm U_0|\geq \sqrt{\mu}n$. 
In particular, $U \sm U_0 \neq \es$ and thus $|D|\geq (\frac{1}{2}-3\sqrt{\mu})n$, so $|X\cup Y|\leq (\frac{1}{2}+3\sqrt{\mu})n$. 
We have
$$
|U \sm U_0|=|X|+|Y|-|X \cup Y|-|U_0| \geq 2(\tfrac{1}{2}-\mu)n-(\tfrac{1}{2}+3\sqrt{\mu})n-\sqrt{\mu}n \geq (\tfrac{1}{2}-5\sqrt{\mu})n.
$$
Each vertex in $U\sm U_0$ has at least $(\frac{1}{2}-3\sqrt{\mu})n$ neighbours in $V \sm U$. 
Since $d_G(v,D) \geq (\frac{1}{2}-3\sqrt{\mu})n$ for all $v \in U \sm U_0$,
we have $e_G(U \sm U_0,D) \geq (\frac{1}{2}-5\sqrt{\mu})(\frac{1}{2}-3\sqrt{\mu})n^2$ and hence 
$D_0 := \{x \in D: d_G(x,U \sm U_0) \leq (\frac{1}{2}-3\sqrt{\mu})n\}$ has size $|D_0| \leq 6\sqrt{\mu}n$.
So
$
|D \sm D_0| \geq (\tfrac{1}{2}-9\sqrt{\mu})n
$.

Now choose any $A \subseteq U \sm U_0$ and $B \subseteq D \sm D_0$ with $|A|=|B|=(\frac{1}{2}-\eps/2)n$,
and let $C := V \sm (A \cup B)$.
Then $A$ and $B$ are disjoint, and
we have $d_G(a,B) \geq d_G(a,D \sm D_0) - (\eps/2+5\sqrt{\mu}) n \geq (\frac{1}{2}-\eps)n$ for all $a \in A$
and similarly for $d_G(b,A)$ for all $b \in B$.
Finally, $e_G(A) \leq e_G(U) \leq e_G(X,Y) < \mu n^2 < \eps n^2/2$.
Thus $G$ is $(\eps/2,\ECtwo)$-extremal.

\medskip
\noindent
\emph{Case 2}. $|U|< 2\sqrt{\mu} n$.
This case is very similar so we only sketch the proof.
It is easy to see that for all but most $2\sqrt{\mu}n$ vertices $v$ in $X$ we have $d_G(v,Y)\leq \sqrt{\mu}n$. Similarly, for all but most $2\sqrt{\mu}n$ vertices $v$ in $Y$ we have $d_G(v,X)\leq \sqrt{\mu}n$. We delete these exceptional vertices from $X$ and from $Y$.
Let $X_1:=X\sm Y$ and $Y_1:=Y\sm X$. Note that these sets are disjoint and each has size at least $(\frac{1}{2}-5\sqrt{\mu})n$. Now it follows that for each vertex $v\in X_1$, we have $d_G(v,X_1)\geq (\frac{1}{2}-8\sqrt{\mu})n$ and for each vertex $v\in Y_1$, we have $d_G(v,Y_1)\geq (\frac{1}{2}-8\sqrt{\mu})n$.
We choose $A \subseteq X_1$ and $B \subseteq Y_1$ of the correct sizes, and note that $e_G(A,B) \leq e_G(X,Y) \leq \mu n^2$.
Thus $G$ is $(\eps/2,\ECone)$-extremal.
\end{proof}

\subsection{Stable graph collections}\label{sec:stabcol}

We now move on to consider stability for graph collections.
Lemma~\ref{lm:char} implies that given a positive integer $n$ and $0<1/n \ll \eps \leq 1$ and a graph collection $\bm{G} =(G_1,\ldots,G_n)$ on a common vertex set $V$ of size $n$, whenever $G_i$ is $\eps$-extremal, we can fix a
$$
\text{characteristic partition }(A_i,B_i,C_i) \text{ of }G_i.
$$
We say that a vertex $v \in V$ is \emph{$i$-good} if either $G_i$ is not $\eps$-extremal 
(that is, $G_i$ is $\eps^3$-nice)
or $G_i$ is $\eps$-extremal 
and $v \in A_i\cup B_i$. 
Extremality of a graph collection depends on where graphs are in relation to one another,
hence we make the following definitions.

\begin{defn}[crossing, similar, cross graph]\label{def:cross}
Let $0<1/n,\eps,\dD<1$ where $n \in \mb{N}$ and let $\bm{G}=(G_1,\ldots,G_n)$ be a graph collection on a common vertex set $V$ of size $n$.
Given $i,j \in [n]$ such that $G_i$ and $G_j$ are both $\eps$-extremal, we say that they are \emph{$\dD$-crossing} if $|A_i \sd A_j| \geq \dD n$ and $|A_i \sd B_j| \geq \dD n$.
We define the \emph{cross graph} $C^{\eps,\dD}_{\bm{G}}$ to be the graph with vertex set $[n]$ where $i$ is adjacent to $j$ if and only if $G_i, G_j$ are both $\eps$-extremal and $\dD$-crossing. 
\end{defn}

\begin{obs}\label{obs:cross}
Suppose that $0<\eps \leq \dD/8$.
If $G_i$ and $G_j$ are $\dD$-crossing, then $|X_i\cap Y_j|\geq \dD n/4$ whenever $X,Y\in\{A,B\}$.
\end{obs}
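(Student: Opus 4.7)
The plan is a direct counting argument using the fact that, by Lemma~\ref{lm:char}(ii), each of the four sets $A_i, B_i, A_j, B_j$ has size exactly $(\tfrac12-\eps)n$, while $|C_i|, |C_j|\le 2\eps n$. Set
\[
\aA:=|A_i\cap A_j|,\ \bB:=|A_i\cap B_j|,\ \gG:=|B_i\cap A_j|,\ \dD':=|B_i\cap B_j|.
\]
We want to show each of $\aA,\bB,\gG,\dD'\ge \dD n/4$.

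First I would turn the crossing hypotheses into upper bounds on $\aA$ and $\bB$. Since $|A_i|=|A_j|=(\tfrac12-\eps)n$,
\[
\dD n\le |A_i\sd A_j|=|A_i|+|A_j|-2\aA=(1-2\eps)n-2\aA,
\]
so $\aA\le (\tfrac12-\eps-\tfrac{\dD}{2})n$, and the identical computation with $B_j$ gives $\bB\le (\tfrac12-\eps-\tfrac{\dD}{2})n$.

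Next, I would use the partition identities to flip these into lower bounds on the two remaining intersections $\gG$ and $\dD'$. Writing $A_j=(A_i\cap A_j)\cup(B_i\cap A_j)\cup(C_i\cap A_j)$, we have $\aA+\gG+|C_i\cap A_j|=(\tfrac12-\eps)n$, and since $|C_i\cap A_j|\le |C_i|\le 2\eps n$,
\[
\gG\ge (\tfrac12-\eps)n-\aA-2\eps n\ge (\tfrac{\dD}{2}-2\eps)n\ge \tfrac{\dD n}{4},
\]
using $\eps\le\dD/8$. The identical argument applied to $B_j=(A_i\cap B_j)\cup(B_i\cap B_j)\cup(C_i\cap B_j)$ yields $\dD'\ge\dD n/4$. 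Finally, to get the corresponding lower bounds on $\aA$ and $\bB$, I would do the symmetric calculation using the partition of $A_i$: from $\aA+\bB+|A_i\cap C_j|=(\tfrac12-\eps)n$ and $|A_i\cap C_j|\le 2\eps n$ we get $\aA+\bB\ge(\tfrac12-3\eps)n$, and substituting the upper bound $\bB\le(\tfrac12-\eps-\dD/2)n$ gives $\aA\ge(\dD/2-2\eps)n\ge\dD n/4$, with the analogous step for $\bB$.

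I do not expect any genuine obstacle here: the statement is a bookkeeping consequence of the equal-sizes clause~(ii) of Lemma~\ref{lm:char} together with the two symmetric-difference inequalities defining $\dD$-crossing. The only place one must be slightly careful is remembering that the partitions have a small ``exceptional'' part $C_i,C_j$ of size up to $2\eps n$, which is absorbed cleanly by the hypothesis $\eps\le\dD/8$.
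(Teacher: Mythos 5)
Your proposal is correct: it is the same elementary counting argument as the paper's, using the two symmetric-difference hypotheses together with $|A_i|=|B_i|=|A_j|=|B_j|=(\tfrac12-\eps)n$ and $|C_i|,|C_j|\le 2\eps n$ from Lemma~\ref{lm:char}(ii), and the arithmetic (e.g.\ $(\tfrac{\dD}{2}-2\eps)n\ge \dD n/4$ via $\eps\le\dD/8$) checks out. The paper just organises the bookkeeping slightly more compactly, noting that equal sizes give $|A_i\sm B_j|\ge \dD n/2$ directly and then subtracting $|C_j|$, with the other three cases by symmetry.
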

\begin{proof}
Since $|A_i\sd B_j|\geq \dD n$ and $|A_i|=|B_j|$ we get $|A_i\sm B_j|\geq \dD n/2$. Thus $|A_i\cap A_j|=|A_i\sm (B_j\cup C_j)|\geq \dD n/2 - 2\eps n \geq \dD n/4$.
The other assertions hold by symmetry.
\end{proof}

We can now define stability for graph collections.

\begin{defn}[strongly stable, weakly stable, stable,nice]\label{def:stab}
Let $n \in \mb{N}$ and $0<\mu,\gG,\aA,\eps,\dD<1$ be parameters. Suppose that $\bm{G}=(G_1,\ldots,G_n)$ is a graph collection on a common vertex set $V$ of size $n$. 
We say that
\begin{itemize}
\item $\bm{G}$ is $(\gG,\aA)$-\emph{strongly stable} if $G_i$ is $\aA$-nice for at least $\gG n$ colours $i \in [n]$;
\item $\bm{G}$ is $(\eps,\dD)$-\emph{weakly stable} if 
$e(C^{\eps,\dD}_{\bm{G}})\geq \dD n^2$;
\item $\bm{G}$ is $(\gG,\aA,\eps,\dD)$-\emph{stable} if it is either $(\gG,\aA)$-strongly stable or $(\eps,\dD)$-weakly stable (or both). 
\item $\bm{G}$ is \emph{$\mu$-nice} if 
for every $A \subseteq V$ of size $\lfloor n/2 \rfloor$ we have $e_{\bm{G}}(A)> \mu n^3$ and $e_{\bm{G}}(A,V \sm A)> \mu n^3$.
\end{itemize}
When the parameters are clear from the text, we simply say $\bm{G}$ is \emph{strongly stable}, \emph{weakly stable}, \emph{stable} or~\emph{nice}.
\end{defn}
Note that whenever $\gG' \geq \gG>0$ and $\aA' \geq \aA>0$,
a $(\gG',\aA')$-strongly stable collection is $(\gG,\aA)$-strongly stable,
and whenever $0< \eps' \leq \eps$ and $\dD' \geq \dD >0$,
an $(\eps',\dD')$-weakly stable collection is $(\eps,\dD)$-weakly stable. 

\begin{lemma}\label{lm:re-close}
Suppose that $0<1/n \ll \mu \ll \gG,\aA,\eps,\dD<1$. 
Let $\bm{G}=(G_1,\ldots,G_n)$ be a graph collection on a common vertex set $V$ of size $n$.
If $\bm{G}$ is $(\gG,\aA,\eps,\dD)$-stable, then $\bm{G}$ is $\mu$-nice.
\end{lemma}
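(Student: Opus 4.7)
The plan is to prove the lemma separately in the two sub-cases of stability, since each directly furnishes the required edges.

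\emph{Strongly stable case.} If $\bm{G}$ is $(\gG,\aA)$-strongly stable, then at least $\gG n$ indices $i$ have $G_i$ being $\aA$-nice. Fix any $A\sub V$ with $|A|=\lfloor n/2\rfloor$; since $n$ is large and $\mu\ll\aA$, both $|A|$ and $|V\sm A|$ exceed $(\frac{1}{2}-\aA)n$. Applying $\aA$-niceness of each nice $G_i$ to the pairs $(A,A)$ and $(A,V\sm A)$ gives $e_{G_i}(A)\geq\aA n^2$ and $e_{G_i}(A,V\sm A)\geq\aA n^2$. Summing over the $\geq\gG n$ nice indices yields both $e_{\bm{G}}(A)$ and $e_{\bm{G}}(A,V\sm A)\geq\gG\aA n^3>\mu n^3$.

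\emph{Weakly stable case, alignment.} Let $I$ be the set of $\eps$-extremal indices with characteristic partitions $(A_i,B_i,C_i)$ fixed for each $i\in I$. Fix $A\sub V$ with $|A|=\lfloor n/2\rfloor$ and call $i\in I$ \emph{aligned} (with $A$) if $\min(|A\sd A_i|,|A\sd B_i|)<\dD n/3$. The key observation is that the set $I_{\text{al}}$ of aligned indices is independent in $C^{\eps,\dD}_{\bm{G}}$: given $i,j\in I_{\text{al}}$, the triangle inequality for symmetric differences immediately yields $|A_i\sd A_j|<2\dD n/3$ (both $A$-aligned) or $|A_i\sd B_j|<2\dD n/3$ (mixed alignment). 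In the remaining cases (both $B$-aligned, or $i$ $B$-aligned and $j$ $A$-aligned), using that $A_i$ and $V\sm B_i$ differ by the set $C_i$ of size $\leq 2\eps n$, one shows that one of $|A_i\sd A_j|,|A_i\sd B_j|$ is at most $2\dD n/3+O(\eps n)<\dD n$. In all cases $(i,j)$ is not crossing. Since $C^{\eps,\dD}_{\bm{G}}$ has $\geq\dD n^2$ edges, each with a non-aligned endpoint, $|I\sm I_{\text{al}}|\geq\dD n$.

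\emph{Weakly stable case, edge counts.} For each non-aligned extremal $i$, let $\alpha_i=|A\cap A_i|$ and $\beta_i=|A\cap B_i|$. The inequality $\alpha_i+\beta_i\geq|A|-|C_i|\geq(\frac{1}{2}-2\eps)n$, together with the non-alignment bounds $\alpha_i\leq|A_i|-\dD n/6+O(\eps n)$ and $\beta_i\leq|B_i|-\dD n/6+O(\eps n)$, forces each of $\alpha_i,\beta_i,|A_i|-\alpha_i,|B_i|-\beta_i$ to be $\Omega(\dD n)$. When $G_i$ is $(\eps,\ECone)$-extremal, the in-part degrees $\geq(\frac{1}{2}-2\eps)n$ combined with the power-mean inequality applied to $\alpha_i+\beta_i$ give $e_{G_i}(A)\geq e_{G_i}(A\cap A_i)+e_{G_i}(A\cap B_i)\geq n^2/20$, and edges cutting inside $A_i$ and inside $B_i$ contribute $e_{G_i}(A,V\sm A)=\Omega(\dD n^2)$. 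When $G_i$ is $(\eps,\ECtwo)$-extremal, the near-complete bipartite structure between $A_i$ and $B_i$ gives $e_{G_i}(A)\geq \alpha_i(\beta_i-\eps n)=\Omega(\dD^2 n^2)$ and $e_{G_i}(A,V\sm A)=\Omega(n^2)$. Summing the pointwise minima over the $\geq\dD n$ non-aligned indices yields $e_{\bm{G}}(A),\,e_{\bm{G}}(A,V\sm A)=\Omega(\dD^3 n^3)>\mu n^3$.

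\emph{Main obstacle.} The subtlest point is controlling $e_{\bm{G}}(A)$ in the regime where $I$ is dominated by $(\eps,\ECtwo)$-extremal graphs: such graphs have few edges internal to $A_i$ or $B_i$, so the contribution to $e_{G_i}(A)$ comes solely from bipartite crossings $A\cap A_i$ to $A\cap B_i$, which collapse when $A$ aligns with the partition. The argument hinges on the alignment slack $\dD/3$ being small enough that the triangle-inequality bookkeeping---including the $O(\eps n)$ correction arising from $|C_i|\leq 2\eps n$ when passing between $A_i$ and $V\sm B_i$---yields a strict $<\dD n$ across all four combinations of $A$- and $B$-alignment, so that all aligned extremal graphs lie in a single independent set of the cross graph.
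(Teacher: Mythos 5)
Your proposal is correct and is essentially the paper's own argument run in the direct rather than contrapositive direction: the strongly stable case is handled by summing $\aA$-niceness over the $\gG n$ nice colours, and in the weakly stable case your observation that colours aligned with $A$ are pairwise non-$\dD$-crossing (so weak stability forces at least $\dD n$ non-aligned extremal colours, each contributing $\Omega(\dD^2 n^2)$ edges both inside $A$ and across the cut) is exactly the alignment step in the paper's proof by contradiction, with the cut case (which the paper omits as ``similar'') spelled out. The only caveat is the one you flag yourself: the triangle-inequality bookkeeping needs $\eps$ to be small compared with $\dD$ (e.g.\ $\eps \le \dD/12$), which the lemma's stated hierarchy does not formally supply--but the paper's own proof relies on the same implicit comparison, and it holds wherever the lemma is applied, so this is a shared imprecision rather than a gap specific to your argument.
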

\begin{proof}
Suppose that there is $A \subseteq V$ of size $\lfloor n/2 \rfloor$ such that $e_{\bm{G}}(A)\leq \mu n^3$.
Therefore, for all but at most $\sqrt{\mu}n$ colours $i \in [n]$, we have $e_{G_i}(A)\leq\sqrt{\mu}n^2$, and hence $G_i$ is not $\sqrt{\mu}$-nice. This implies that $\bm{G}$ is not $(1-\sqrt{\mu},\sqrt{\mu})$-strongly stable and hence not $(\gG,\aA)$-stable.
Without loss of generality, suppose that for each $i\in [(1-\sqrt{\mu})n]$, $G_i$ is $\eps$-extremal with characteristic partition $(A_i,B_i,C_i)$.
Let $B := V \sm A$.
It follows that, after possibly relabelling $A_i$ and $B_i$, 
we have $|A\sd A_i|\leq \dD^2 n$ and $|B\sd B_i|\leq \dD^2 n$ for all but at most $\dD^2 n$ colours $i\in [(1-\sqrt{\mu})n]$, otherwise we have at least $\dD^4n^3/2>\mu n^3$ edges of $\bm{G}$ inside $A$ by $\mu \ll \dD$, a contradiction. Delete such colours from $[(1-\sqrt{\mu})n]$ and let $\Cols$ be the remaining colour set.
Thus for every $i,j \in \Cols$, the graphs $G_i$ and $G_j$ are not $\dD^2$-crossing. Therefore 
$e(C^{\eps,\dD}_{\bm{G}})\leq \sqrt{\mu} n^2+\dD^2 n^2< \dD n^2$ by $\mu\ll \dD$. 
Thus $\bm{G}$ is not $(\eps,\dD)$-weakly stable, a contradiction.

The proof of the other case is similar and we omit it. 
\end{proof}

The next key lemma guarantees that a nice graph collection 
in which all vertices have degree at least $(\frac{1}{2}-\mu)n$ in \emph{almost all} colours
contains a transversal copy of a graph $H$ which \emph{locally} looks like a Hamilton cycle:
almost all degrees equal two.
We choose $H$ to be a union of two almost perfect matchings.
It is vital that here, `almost all' means a $1-\theta$ proportion
where $\theta \ll \mu$. 
If we were allowed to take, say, $\theta=3\mu$, then the very first part of the argument of Joos and Kim on the transversal minimum degree threshold for matchings~\cite{JK} would give a very short proof even without assuming the graph collection is stable.
Instead, we need to analyse the structure of maximal matchings in a collection of almost Dirac graphs.

This lemma will be applied in a reduced graph, which is why we only assume a weaker degree condition, and then the transversal blow-up lemma will be applied
to obtain a transversal collection of long paths covering almost the whole of $V$.

\begin{lemma}\label{lm:2matching}
Let $0<1/n\ll d\ll \mu,\theta \ll \gG,\aA,\eps,\dD \ll 1$. Suppose that $\bm{G}=(G_1,\ldots,G_n)$ is a graph collection defined on a common vertex set $V$ of size $n$ and
for each vertex $v \in V$,  we have $d_{G_i}(v)\geq (\frac{1}{2}-\mu)n$ for all but at most $dn$ colours $i\in [n]$. If $\bm{G}$ is $(\gG,\aA,\eps,\dD)$-stable, then $\bm{G}$ contains 
two edge-disjoint transversal matchings $M_1$ and $M_2$ in $\bm{G}$ such that
$e(M_\ell)\geq (\frac{1}{2}-\theta)n$ for $\ell=1,2$ and $\col(M_1)\cap \col(M_2)=\es$.
\end{lemma}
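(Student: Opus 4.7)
The plan is to split the colour set $[n]$ uniformly at random into two halves $\Cols_1 \sqcup \Cols_2$ of equal size (up to rounding). By Lemma~\ref{lm:chernoff}, with high probability each half inherits the near-Dirac degree condition---every vertex has at most $\approx dn/2$ bad colours in each $\Cols_\ell$---as well as a proportional version of stability: each $\Cols_\ell$ still contains $\Omega(\gG n)$ nice colours in the strongly stable case, and the cross graph $C^{\eps,\dD}_{\bm{G}}$ restricted to $\Cols_\ell$ retains $\Omega(\dD n^2)$ edges in the weakly stable case. Colour-disjointness of $M_1, M_2$ is then automatic. To secure edge-disjointness as vertex pairs, I first construct $M_1$ in $\bm{G}_1 := (G_c : c \in \Cols_1)$, then search for $M_2$ in the punctured collection $(G_c \sm E(M_1) : c \in \Cols_2)$; removing the edges of a matching reduces every vertex degree by at most $1$, so all hypotheses survive.

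The core task is to find a transversal matching of size at least $(\tfrac{1}{2} - \theta)n$ in a half $\bm{G}_\ell$. I take a \emph{maximum} transversal matching $M$, set $U := V \sm V(M)$ and $\Cols^* := \Cols_\ell \sm \col(M)$, and assume for contradiction that $|M| = k < (\tfrac{1}{2} - \theta)n$; then $|U| > 2\theta n$ and $|\Cols^*| > \theta n$. Maximality of $M$ forces the key structural consequence $G_c[U] = \es$ for every $c \in \Cols^*$---i.e., $U$ is a common independent set of all unused colours---and rules out rainbow augmenting paths of every odd length. A double-counting argument on length-$3$ augmenting paths $v_0 v_1 v_2 v_3$ (with $v_0, v_3 \in U$, $v_1 v_2 \in M$, and $v_0 v_1, v_2 v_3$ carrying two distinct colours of $\Cols^*$) shows that at every matching edge, one endpoint has no $U$-neighbour in any unused colour. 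Combining this with $d_{G_c}(u) \geq (\tfrac{1}{2} - \mu)n$ for generic $u \in U, c \in \Cols^*$ and double-counting $e_{\bm{G}^*}(V(M), U)$ where $\bm{G}^* := (G_c : c \in \Cols^*)$, I obtain the intermediate bound $k \geq (\tfrac{1}{2} - \mu)n - o(n)$.

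The main obstacle is closing the remaining gap from $(\tfrac{1}{2} - \mu)n$ to $(\tfrac{1}{2} - \theta)n$, since the pure degree argument cannot bridge it when $\theta \ll \mu$: the stability of $\bm{G}$ must enter decisively here. In the stuck regime, the common-IS structure $(U, V(M))$ together with the inherited near-Dirac degrees leave ample room for longer augmenting structures; in particular each $c \in \Cols^*$ has $\Theta(n^2)$ edges inside $V(M)$, so length-$5$ rainbow alternating walks $v_0 v_1 v_2 v_3 v_4 v_5$ (with $v_1 v_2, v_3 v_4 \in M$ and interior edges in $\Cols^*$) are abundant. Iterating the length-$3$ dichotomy to length $5$ and beyond yields successively stronger structural constraints on $\bm{G}^*$, which force almost every colour in $\Cols^*$ to be $\eps$-extremal with a characteristic partition placing $U$ on one side, via Lemma~\ref{lm:char}. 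In the weakly stable case this contradicts the cross-graph lower bound of $\Omega(\dD n^2)$ inherited by $\Cols_\ell$ through Observation~\ref{obs:cross}, since almost all colours of $\Cols^*$ would share the same characteristic partition. In the strongly stable case, the $\Omega(\gG n)$ nice colours cannot be extremal and hence must all lie in $\col(M)$; I would then exploit the rich edge structure of a nice colour in $\col(M)$ together with the $\mu$-niceness of $\bm{G}$ from Lemma~\ref{lm:re-close} to build a long rainbow augmenting walk that uses a nice colour's edge inside $V(M)$ to reach a second uncovered vertex. Either way, the assumption $|M| < (\tfrac{1}{2} - \theta)n$ is contradicted, completing the proof.
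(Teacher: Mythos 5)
Your high-level architecture matches the paper's: randomly split the colour set $[n]$ into halves, verify each half inherits the degree condition and stability/niceness, find a near-perfect transversal matching in one half, delete its edges, and repeat in the other half. The paper packages the per-half argument as Lemma~\ref{lm:1matching} and uses Lemma~\ref{lm:re-close} to convert stability into $\mu$-niceness; you implicitly do the same.

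The gaps are all in the per-half argument, and they are significant. First, your claimed consequence of maximality --- ``at every matching edge, one endpoint has no $U$-neighbour in any unused colour'' --- is false. Maximality only forbids an augmenting configuration $w\sim u_1$ (colour $c_1$), $w^+\sim u_2$ (colour $c_2$) with $u_1\neq u_2$ \emph{and} $c_1\neq c_2$; both endpoints may still have plenty of $U$-neighbours if they all go to one vertex, or all use one colour. The paper's Claim~\ref{cl:partition1} avoids this by fixing a single pair $(v_1,c_1),(v_2,c_2)$, and the resulting statement is the much weaker $N_\ell^+\cap N_{3-\ell}=\emptyset$. Your double-count of $e_{\bm{G}^*}(V(M),U)$ therefore rests on a false premise, and the intermediate bound $k\geq(\tfrac12-\mu)n - o(n)$ is not established by the route you describe (the correct derivation does give this bound, but via $|N_1^+|+|N_2|\leq|V(M)|$ for fixed $v_1,v_2$).

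Second, and more seriously, the bridge from $(\tfrac12-\mu)n$ to $(\tfrac12-\theta)n$ is the whole point and your account of it is not an argument. The obstruction is quantitative: after the degree step, the remaining unused colour set $\Cols^*$ has size only $\Theta(\theta n)$, and $\theta\ll\mu$. Even if you proved $e(G_c[X_1,X_2])=0$ (or $e(G_c[X])=0$) for \emph{every} $c\in\Cols^*$, you would only control $\theta n^3\ll\mu n^3$ edges in $\bm{G}$, which cannot contradict $\mu$-niceness. What actually closes the gap in the paper is the \emph{swap argument}: for each matching colour $c\in\col(M)$ the paper removes the $c$-edge $ww^+$ from $M$, inserts an edge $v_1w^+$ of a free colour, and reruns Claim~\ref{cl:partition1} on the modified maximal matching to conclude that $e_{G_c}(X_1,X_2)$ or $e_{G_c}(X)$ is $O(\mu n^2)$ as well. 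This extends the sparse structure to roughly $(\tfrac12-O(\mu))n$ colours, which is enough to contradict $\mu$-niceness. Your ``iterate to longer augmenting walks'' and ``build a long rainbow augmenting walk through a nice colour's edge'' do not play this role: you never explain how a length-$5$ (or longer) obstruction yields a quantitative bound on $e_{\bm G}$ over $\Omega(n)$ colours, nor do you articulate the Case~1/Case~2 dichotomy (the $\ECone$-like alternative where $X_1,X_2$ are separated vs.\ the $\ECtwo$-like alternative where $X$ is internally sparse) that the argument splits into. In the weakly stable case you also assert that ``almost all colours of $\Cols^*$ sharing a characteristic partition'' contradicts the cross-graph lower bound, but the cross graph concerns all $n$ colours, not the $\theta n$ unused ones, so the contradiction does not follow. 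As written, the stability step is a sketch of an intuition, not a proof.
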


The lemma is an easy consequence of the next lemma which guarantees an almost spanning transversal matching in a nice graph collection containing about $n/2$ graphs.

\begin{lemma}\label{lm:1matching}
Let $0<1/n\ll d\ll \mu,\theta \ll 1$. Suppose that $\bm{G}=(G_1,\ldots,G_{(\frac{1}{2}-\theta/4)n})$ is a graph collection defined on a common vertex set $V$ of size $n$ and
for each vertex $v \in V$, we have $d_{G_i}(v)\geq (\frac{1}{2}-\mu)n$ for all but at most $dn$ colours $i \in [n]$.
If $\bm{G}$ is $100\mu$-nice, then $\bm{G}$ contains a transversal matching $M$ with
$e(M)\geq (\frac{1}{2}-\theta)n$.
\end{lemma}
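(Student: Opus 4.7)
The plan is to argue by contradiction: suppose that every transversal matching in $\bm{G}$ has fewer than $(\frac{1}{2}-\theta)n$ edges, and let $M$ be a maximum such matching with $|M|=m$. Set $U:=V\setminus V(M)$ (unmatched vertices) and $\mc{D}:=[(\frac{1}{2}-\theta/4)n]\setminus\col(M)$ (unused colours); then $|U|\geq 2\theta n$ and $|\mc{D}|\geq \tfrac{3}{4}\theta n$. Maximality of $M$ has two useful consequences. First, $G_c[U]=\es$ for every $c\in\mc{D}$, as otherwise $M$ extends. Second, there is no \emph{3-augmenting path}: for each $e=xy\in M$ there are no distinct $u,v\in U$ and distinct $c_1,c_2\in\mc{D}$ with $xu\in G_{c_1}$ and $yv\in G_{c_2}$, since otherwise $(M\setminus\{e\})\cup\{xu,yv\}$ is a larger transversal matching.

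The key structural step is to convert the 3-augmenting obstruction into a numerical bound. For $e=xy\in M$ set $\mc{A}_e:=\{c\in\mc{D}: N_{G_c}(x)\cap U\neq\es\}$ and define $\mc{B}_e$ symmetrically; I will call $e$ \emph{degenerate} if $\mc{A}_e=\es$ or $\mc{B}_e=\es$. A case analysis on whether $\mc{A}_e,\mc{B}_e$ coincide or have size $\geq 2$ shows that for every non-degenerate $e$,
\begin{equation*}
f_e := \sum_{c\in\mc{D}}\bigl(d_{G_c}(x,U)+d_{G_c}(y,U)\bigr)\leq 2(|\mc{D}|+|U|)\leq 3n,
\end{equation*}
the point being that the absence of 3-augmenting paths forces all but at most one colour at each endpoint to contribute at most $1$ to $d_{G_c}(\cdot,U)$. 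On the other hand, the degree hypothesis together with $G_c[U]=\es$ for $c\in\mc{D}$ gives
\begin{equation*}
\sum_{e\in M}f_e=\sum_{c\in\mc{D}}e_{G_c}(U,V(M))=\sum_{u\in U}\sum_{c\in\mc{D}}d_{G_c}(u)\geq\tfrac{1}{3}\theta^2 n^3,
\end{equation*}
after accounting for the at most $dn$ bad colours per vertex. Since the non-degenerate contribution is at most $3nm\leq \tfrac{3}{2}n^2$, the degenerate edges contribute at least $\tfrac{1}{4}\theta^2 n^3$ to $\sum_e f_e$. For degenerate $e$ we have the trivial bound $f_e\leq|\mc{D}||U|\leq n^2/2$, so the set
$V_0:=\{v\in V(M): d_{G_c}(v,U)=0 \text{ for all } c\in\mc{D}\}$, which contains one endpoint of each degenerate edge, has $|V_0|\geq \Omega(\theta^2 n)$.

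Finally I would use niceness to reach a contradiction. The crucial observation is that $V_0\cup U$ carries no edge of $\bigcup_{c\in\mc{D}}G_c$ except those sitting inside $V_0$. I choose $A\subseteq V$ with $|A|=\lfloor n/2\rfloor$ tailored to this deficit. If $|U|\geq\lfloor n/2\rfloor$ I take $A\subseteq U$, so that $\sum_{c\in\mc{D}}e_{G_c}(A)=0$, and then use the cut form $e_{\bm{G}}(A,V\setminus A)>100\mu n^3$ combined with an upper bound on the $\col(M)$-contribution to reach a contradiction; otherwise I choose $A\supseteq V_0$ with $A\setminus V_0\subseteq U$ as much as possible, so that $\sum_{c\in\mc{D}}e_{G_c}(A)\leq|\mc{D}|\binom{|V_0|}{2}$, and combine with niceness after controlling the $\col(M)$-contribution.

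The main obstacle is this last step: niceness is a fairly weak quantitative condition, so one must combine it carefully with the structural information on $V_0$, and the case split between $|U|\geq\lfloor n/2\rfloor$ and $|U|<\lfloor n/2\rfloor$ demands slightly different treatments. If necessary, one can sharpen the bound on $|V_0|$ by further analysing length-$5$ augmenting paths (which likewise must fail) to force a more rigid bipartite-like structure on $\bm{G}$ restricted to $V_0\cup U$, which then directly contradicts $\mu$-niceness.
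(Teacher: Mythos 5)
Your opening moves match the paper's (maximal transversal matching $M$, no $3$-augmenting paths, $G_c[U]=\es$ for unused colours), but the proposal stalls at exactly the point where the real work has to be done, and the last step as written does not close.

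Two concrete problems. First, your lower bound $|V_0|\geq\Omega(\theta^2 n)$ is far too weak. Since $|U|$ can be as small as $2\theta n$, the set $V_0\cup U$ can have size $O(\theta n)\ll n/2$, so there is no room to pick $A$ of size $\lfloor n/2\rfloor$ contained in, or even mostly contained in, $V_0\cup U$. The ``choose $A\supseteq V_0$ with $A\setminus V_0\subseteq U$ as much as possible'' fallback then forces $A$ to contain many vertices of $V(M)\setminus V_0$, about which you have no control, and the bound $\sum_{c\in\mc{D}}e_{G_c}(A)\leq|\mc{D}|\binom{|V_0|}{2}$ fails. Second, and independently, the niceness condition is about $e_{\bm{G}}(A)=\sum_{c}e_{G_c}(A)$ over \emph{all} colours of the collection, not just $\mc{D}$. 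Since $|\col(M)|$ can be as large as roughly $n/2$, the colours used on $M$ can alone contribute up to order $n^3$ to $e_{\bm{G}}(A)$, swamping the $100\mu n^3$ threshold; so even in your easier case $|U|\geq\lfloor n/2\rfloor$ with $A\subseteq U$ and $\sum_{c\in\mc{D}}e_{G_c}(A)=0$, you have not shown $e_{\bm{G}}(A)$ is small. You acknowledge the $\col(M)$-contribution must be controlled but give no mechanism.

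The paper's proof resolves both issues via a single sharper structural claim proved from $3$- and $5$-augmenting paths: under maximality, one of two rigid dichotomies holds, each producing a set of size $(\tfrac12 - O(\mu))n$ (not merely $\Omega(\theta^2 n)$) carrying no $\mc{D}$-coloured edges of the relevant type. The $\col(M)$-contribution is then handled by a perturbation trick: for each $c\in\col(M)$, remove the $c$-coloured edge of $M$ and add an edge to an unmatched vertex to get another maximal matching $M'$ with $c\notin\col(M')$, reapply the structural claim to $M'$, and observe that the new extremal set is within $O(\mu n)$ of the old one, yielding $e_{G_c}(X_1,X_2)$ or $e_{G_c}(X)$ of order $\mu n^2$ for every $c\in\col(M)$ as well. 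Your remark that one should analyse $5$-augmenting paths is pointing in exactly the right direction, but the heart of the lemma lies in carrying that out and in the perturbation argument; without them the contradiction with $100\mu$-niceness does not materialise.
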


\begin{proof}[Proof of Lemma~\ref{lm:2matching} given Lemma~\ref{lm:1matching}]
Let $\Cols_1$ be a random set obtained by selecting each colour in $[n]$ randomly and independently with probability $\frac{1}{2}$, and let $\Cols_2:=[n]\sm \Cols_1$.
\begin{claim}
 With high probability, for $\ell=1,2$ we have $|\Cols_\ell|=n/2\pm n^{3/4}$ and the graph collection $\bm{G}^\ell :=(G_i:i\in \Cols_{\ell})$ is 
$\mu$-nice.
\end{claim}
\begin{proof}[Proof of Claim]\label{cl:randomp}
The first statement follows from Chernoff's inequality (Lemma~\ref{lm:chernoff}). 

Suppose first that $\bm{G}$ is $(\gG,\aA)$-strongly stable.
Then $\ms{A}:=\{i\in [n]: G_i\text{ is }\aA\text{-nice}\}$ satisfies $|\ms{A}| \geq \gG n$.
By Chernoff's inequality, with high probability, we have $|\ms{A}\cap \Cols_{\ell}|\geq \gG n/3$ and thus $\bm{G}^\ell$ is $(\gG/2,\aA)$-strongly stable for $\ell=1,2$.

Suppose instead that $\bm{G}$ is $(\eps,\dD)$-weakly stable.
Let $\ms{A}:=\{i\in [n]: G_i\text{ is }\eps\text{-extremal}\}$. We have $e(C^{\eps,\dD}_{\bm{G}})\geq \dD n^2$, so in particular, $|\ms{A}| \geq \dD n$. Let $H:=C^{\eps,\dD}_{\bm{G}}$ and $\ms{B}:=\{i\in \ms{A}: d_{H}(i,\ms{A})\geq \dD n\}$. It follows that $|\ms{B}|\geq \dD n$. Let $\mc{B}:=\{N_{H}(v,\ms{A}): v\in \ms{B}\}$. By Chernoff's inequality, with high probability, we have $|\ms{B}\cap \Cols_{\ell}|\geq \dD n/3$ and $|N\cap \Cols_{\ell}|\geq \dD n/3$ for each $N\in \mc{B}$ and $\ell=1,2$. Thus with high probability, we have $e(H[\Cols_{\ell}])\geq \dD^2 n^2/18$ and thus $\bm{G}^\ell$ is $(\eps,\dD^2/18)$-weakly stable for $\ell=1,2$.

In both cases, Lemma~\ref{lm:re-close} implies that $\bm{G}^\ell$ is $\mu$-nice for $\ell=1,2$. 
\end{proof}

Fix such $\Cols_1$ and $\Cols_2$. 
Apply Lemma~\ref{lm:1matching} to $\bm{G}^1$ to obtain a transversal matching
$M_1$ with $e(M_1) \geq (\frac{1}{2}-\theta)n$ and colours from $\Cols_1$.
Remove any $xy \in E(M_1)$ from $G^2_i$
for each $i \in \Cols_2$. Every vertex degree in every graph reduces by at most one,
so $\bm{G}^2$ is still $2\mu$-nice.
Apply Lemma~\ref{lm:1matching} to $\bm{G}^2$ to obtain a transversal matching $M_2$
with $e(M_2) \geq (\frac{1}{2}-\theta)n$ and colours from $\Cols_2$.
\end{proof}

\begin{proof}[Proof of Lemma~\ref{lm:1matching}]
Let $M$ be a maximal transversal matching in $\bm{G}$ and 
suppose for a contradiction that 
$e(M) < (\frac{1}{2}-\theta)n$.
Let $v_1,v_2\notin V(M)$ and $c_1,c_2\notin \col(M)$ be distinct such that $N_\ell := N_{G_{c_\ell}}(v_\ell)$ satisfies $|N_\ell| \geq (\frac{1}{2}-\mu)n$ for $\ell=1,2$. 
Let $U := V \sm (V(M) \cup \{v_1,v_2\})$ and $\Cols :=[(\frac{1}{2}-\theta/4)n]\sm(\col(M) \cup \{c_1,c_2\})$.
Given $x \in V(M)$, we write $x^+$ to denote the unique neighbour of $x$ in $M$
and given $A \subseteq V(M)$, we write $A^+:=\{x^+: x \in A\}$.
Given $A \subseteq V$, we write $\ov{A} := V \sm A$.
\begin{claim}\label{cl:partition1}
We have $N_\ell \subseteq V(M)$ and $(\frac{1}{2}-\mu)n \leq |N_\ell|\leq (\frac{1}{2} + 3\mu)n$ for $\ell=1,2$ 
and exactly one of the following holds:
\begin{itemize}
\item 
$|\ov{N_1} \sd N_2| \leq 6\mu n$,
and writing $M_\ell := M[N_\ell]$ and $X_\ell := V(M_\ell)$
for $\ell=1,2$, we have
$e(M_\ell) \geq (\frac{1}{4}-2\mu)n$
and $X_1 \cap X_2 = \emptyset$ and 
$e(G_c[X_1,X_2]) = 0$ 
for all $c \in \Cols$; 
\item 
$|N_1 \sd N_2| \leq 6\mu n$, 
and there is $X \subseteq \ov{N_1 \cup N_2} \cap V(M)$ with $X^+ \subseteq N_1 \cap N_2$
such that
$|\ov{N_1 \cup N_2} \sm X|,|(N_1 \cap N_2) \sm X^+| \leq 6\mu n$
and $|X| \geq (\frac{1}{2}-3\mu)n$ and 
$e(M[X^+]) = 0$
and $e(G_c[X]) = 0$ for all $c \in \Cols$.
\end{itemize}
\end{claim}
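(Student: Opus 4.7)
The plan is to exploit the maximality of $M$ via local swap arguments that replace one or two $M$-edges by edges using the fresh colours $c_1, c_2$ (which lie outside $\col(M)$) together with any $c \in \Cols$. First, any $u \in N_\ell \sm (V(M) \cup \{v_{3-\ell}\})$ yields a larger transversal matching $M \cup \{v_\ell u\}$ (coloured $c_\ell$), and $v_{3-\ell} \in N_\ell$ gives the augmentation $M \cup \{v_1 v_2\}$, so $N_\ell \sub V(M)$. Second, for each edge $xy \in M$ consider the auxiliary bipartite graph with parts $\{v_1, v_2\}$ and $\{x, y\}$ and edges $v_\ell u$ iff $u \in N_\ell$; a perfect matching there would yield the augmentation $M \sm \{xy\} \cup \{\text{two new edges}\}$ of size $e(M) + 1$ with fresh colours $c_1, c_2$, contradicting maximality. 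By K\"onig's theorem each such auxiliary graph has at most two edges, so $|N_1| + |N_2| \leq 2e(M) \leq (1 - 2\theta)n$; combined with $|N_\ell| \geq (\tfrac{1}{2} - \mu)n$, this yields $|N_\ell| \leq (\tfrac{1}{2} + 3\mu)n$.

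Partition $V(M)$ into $P := N_1 \sm N_2$, $Q := N_2 \sm N_1$, $R := N_1 \cap N_2$, $S := V(M) \sm (N_1 \cup N_2)$, and set $d := |R|$. The auxiliary-graph condition forbids $M$-edges of the four types $\{P, Q\}$, $\{P, R\}$, $\{Q, R\}$, $\{R, R\}$; in particular every $r \in R$ is $M$-matched to $S$. Define $X := \{z \in S : z^+ \in R\}$ (so $|X| = d$), $X' := \{z \in S : z^+ \in P\}$, $X'' := \{z \in S : z^+ \in Q\}$, $P_1 := \{p \in P : p^+ \in P\}$, $Q_1 := \{q \in Q : q^+ \in Q\}$, and $W := X \cup P_1 \cup Q_1 \cup X' \cup X''$. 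The key claim is: for every $x \in X$ and $y \in W \sm \{x\}$, the existence of $xy \in G_c$ with $c \in \Cols$ would permit the augmentation $M \sm \{xx^+, yy^+\} \cup \{v_1 w_1, v_2 w_2, xy\}$ of size $e(M) + 1$ with fresh colours $c_1, c_2, c$, where $w_\ell \in \{x^+, y, y^+\} \cap N_\ell$ is chosen via the hub $x^+ \in R \sub N_1 \cap N_2$ and the fact that for each of the five possibilities for $y$ at least one of $y, y^+$ lies in the remaining $N_{3-\ell}$. This contradicts maximality, so $e_{G_c}(x, W) = 0$ for every $x \in X$ and $c \in \Cols$.

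Hence $d_{G_c}(x) \leq n - |W|$ for such $x, c$, and a direct count using the disjointness of $X, P_1, Q_1, X', X''$ gives $|W| = |N_1| + |N_2| - d$. Since $|\Cols| \geq (3\theta/4)n - 2$ exceeds the (few) colours where the degree hypothesis fails at $x$, some $c \in \Cols$ achieves $d_{G_c}(x) \geq (\tfrac{1}{2} - \mu)n$, forcing $d \geq (\tfrac{1}{2} - 3\mu)n$ whenever $X \neq \es$. Hence either $d = 0$ or $d \geq (\tfrac{1}{2} - 3\mu)n$. In the former (Case 1), $N_1 = P$ and $N_2 = Q$ are disjoint, $|\ov{N_1} \sd N_2| = n - |N_1| - |N_2| \leq 2\mu n$, and $|S| \leq 2\mu n$; the properties $X_1 \cap X_2 = \es$, $e(M_\ell) \geq (\tfrac{1}{4} - 2\mu)n$, and $e(G_c[X_1, X_2]) = 0$ (this last by an analogous swap on an $xy \in G_c$ with $x \in X_1 = P_1$, $y \in X_2 = Q_1$) follow by direct counting. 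In the latter (Case 2), $|N_1 \sd N_2| = |N_1| + |N_2| - 2d \leq 6\mu n$, $X^+ = R$ gives $|(N_1 \cap N_2) \sm X^+| = 0$ and $|\ov{N_1 \cup N_2} \sm X| = n - |N_1| - |N_2| \leq 2\mu n$, $e(M[X^+]) = 0$ because $\{R, R\}$-edges are forbidden, and $e(G_c[X]) = 0$ is the special case $y \in X$ of the key claim. The two cases are clearly mutually exclusive. The main obstacle is the case analysis in the key claim's swap, but the hub role of $x^+ \in R$ (connectable to both $v_1$ and $v_2$) reduces each of the five subcases to a short check on which of $y, y^+$ lies in $N_{3-\ell}$.
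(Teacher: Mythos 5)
Your proof is correct and, after unwinding the notation, takes a route that closely parallels the paper's argument but reorganizes it in a couple of ways. Both proofs establish maximality via the same hub-swap: the paper shows $N_\ell^+ \cap N_{3-\ell}=\es$ and $N_3 \cap N_\ell^+=\es$ by replacing $ww^+$ (and $v_3v_3^+$) with three new edges coloured $c_1,c_2,c_3$, which is exactly your replacement of $xx^+,yy^+$ by $v_1w_1,v_2w_2,xy$. Your partition $P,Q,R,S$ of $V(M)$ is the same object as the paper's $S_\ell,T_\ell$ (edge sets); in particular $|T_1\cap T_2|=|R|=d$, $V(S_\ell)=P_1$ or $Q_1$, and your $W=X\cup P_1\cup Q_1\cup X'\cup X''$ coincides with the paper's $N_1^+\cup N_2^+$. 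The case split differs superficially: the paper splits on whether $t_1+t_2$ is at most or at least $2\mu n$ and derives $|T_1\cap T_2|\ge(\tfrac12-3\mu)n$ only in the second case, whereas you observe (using the same degree count) that $d>0$ already forces $d\ge(\tfrac12-3\mu)n$, giving the cleaner dichotomy $d=0$ or $d\ge(\tfrac12-3\mu)n$. You also derive the upper bound $|N_\ell|\le(\tfrac12+3\mu)n$ once and for all from the K\"onig observation that each auxiliary bipartite graph over an $M$-edge has at most two edges, hence $|N_1|+|N_2|\le 2e(M)$, rather than rederiving it separately in each case as the paper does. These are genuine simplifications of the bookkeeping, though the underlying swap is identical.

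Two small imprecisions worth fixing. First, in the key claim you write $w_\ell\in\{x^+,y,y^+\}\cap N_\ell$; in fact $w_\ell$ can never be $y$, since $xy$ is one of the three new edges and $y$ would then have degree two in the augmented matching. In all five cases the correct choices are $\{w_1,w_2\}=\{x^+,y^+\}$ (with $x^+\in R\sub N_1\cap N_2$ the hub and $y^+$ always landing in the complementary $N_\ell$), so the swap goes through, but the set from which $w_\ell$ is drawn should be $\{x^+,y^+\}$. Second, $e(G_c[X_1,X_2])=0$ in Case 1 is not a consequence of your key claim (which concerns pairs with one vertex in $X\subseteq S$), so the ``analogous swap'' you invoke does need to be spelled out; it is the same three-edge replacement with $x\in P_1$, $y\in Q_1$, $w_1=x^+\in N_1$, $w_2=y^+\in N_2$, and works because in Case 1 the partition has $R=\es$ so $x,x^+\in N_1$ and $y,y^+\in N_2$.
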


\begin{proof}[Proof of Claim]
If there is $y \notin V(M)$ with $v_\ell y \in G_{c_\ell}$ for some $\ell=1,2$
then we can add $v_\ell y$ to $M$ to get a larger matching.
Thus $N_\ell \subseteq V(M)$ for $\ell=1,2$.
Moreover,
we have $N_\ell^+\cap N_{3-\ell}=\es$ since otherwise we get a $3$-path $v_{3-\ell} ww^+ v_{\ell}$ with $\col(v_{3-\ell} w)=c_{3-\ell}$, $\col(v_{\ell} w^+)=c_{\ell}$ and $w w^+\in E(M)$. Replacing $w w^+$ by $v_{3-\ell} w$ and $w^+v_{\ell}$ gives a transversal matching that is larger than $M$, a contradiction. 
Let $S_\ell := E(M[N_\ell])$ and $T_\ell := E(M[N_\ell,V-N_\ell])$
and write $s_\ell:=|S_\ell|$ and $t_\ell:=|T_\ell|$.
Note that, since $N_\ell^+ \cap N_{3-\ell} = \es$, we have
\begin{equation}\label{eq:S}
S_1\cap S_2=S_1 \cap T_2 = T_1 \cap S_2 = \es.
\end{equation}
Since $N_\ell \subseteq V(M)$ we have
$$
2s_\ell+t_\ell = |N_\ell| \geq (\tfrac{1}{2}-\mu)n. 
$$

\medskip
\noindent
\emph{Case 1.} $t_1+t_2\leq 2\mu n$.
We will show that the first alternative holds.
We have $x \in N_2$ only if $x^+ \notin N_1$ so $|N_1 \cap N_2| \leq t_1+t_2 \leq 2\mu n$,
and $|\ov{N_1 \cup N_2}| \leq n-2(s_1+s_2) \leq 2\mu n+t_1+t_2 \leq 4\mu n$.
So $|\ov{N_1} \sd N_2| \leq 6\mu n$.
Also $|N_2| \leq |\ov{N_1}|+2\mu n \leq (\frac{1}{2}+3\mu)n$
so by symmetry we have $(\frac{1}{2}-\mu)n \leq |N_\ell| \leq (\frac{1}{2}+3\mu)n$
for $\ell=1,2$.
Next, $e(M_\ell)=s_\ell \geq \frac{1}{2}(|N_\ell|-t_\ell) \geq (\frac{1}{4}-2\mu)n$.
We have $X_\ell = V(S_\ell)$
and since $S_\ell$ is a matching, $S_1 \cap S_2=\emptyset$
implies that $X_1 \cap X_2 = \emptyset$.

Finally, suppose $E(G_c[X_1,X_2]) \neq \emptyset$ for some $c \in \Cols$.
Then there are $w_\ell w_\ell^+ \in E(M_\ell)$ for $\ell=1,2$
with $w_1^+w_2 \in G_c$.
Thus there is a transversal path $v_1w_1w_1^+w_2w_2^+v_2$
using $c_1,c_2,c$ and two colours from $M$.
Hence
we can replace $w_1w_1^+$ and $w_2w_2^+$ by $v_1w_1,w_1^+w_2,v_2w_2^+$
to obtain a larger transversal matching, a contradiction.

\medskip
\noindent
\emph{Case 2.} $t_1+t_2\geq 2\mu n$.
We will show that the second alternative holds.
Now~(\ref{eq:S}) implies that
\begin{align*}
|T_1\cap T_2|&=t_1+t_2-|T_1\cup T_2|\geq t_1+t_2-(e(M)-s_1-s_2)= \frac{t_1+t_2}{2}+\frac{2s_1+t_1}{2}+\frac{2s_2+t_2}{2}-e(M)\\
&\geq \mu n+(\tfrac{1}{2}-\mu)n-(\tfrac{1}{2}-\theta)n\geq \theta n.
\end{align*}
Since $N_\ell^+ \cap N_{3-\ell}=\emptyset$, we can choose an edge $v_3v_3^+\in T_1\cap T_2$ where $v_3^+\in N_1\cap N_2$ and $v_3\in \ov{N_1\cup N_2}$. Choose any colour $c_3\in \Cols$ such that $d_{G_{c_3}}(v_3) \geq (\frac{1}{2}-\mu)n$. Let $N_3:=N_{G_{c_3}}(v_3)$.

Suppose $N_3\cap N_\ell^+\neq \es$ for $\ell \in [2]$. Thus there exists $ww^+\in E(M)$ such that $\col(v_\ell w^+)=c_\ell$, $\col(v_3w)=c_3$ and hence $v_\ell w^+wv_3v_3^+v_{3-\ell}$ is a transversal path with $v_\ell w^+, wv_3, v_3^+v_{3-\ell}$ coloured by $c_\ell, c_3, c_{3-\ell}$. By replacing $v_3v_3^+, ww^+$ in $M$ by $v_3^+v_{3-\ell}, v_\ell w^+,wv_3$, we get a larger transversal matching, a contradiction. 
Thus $N_3 \cap N_\ell^+ = \es$.
Also, $N_3 \subseteq V(M)$:
if not, there is $w \notin V(M)$ such that 
we can replace $v_3v_3^+$ in $M$ with $v_3w, v_3^+v_1$ using colours $c_3,c_1$.
We have shown that $N_3, N_1^+ \cup N_2^+$ are pairwise disjoint subsets of $V(M)$.
By definition, we have the partition
$$
N_1^+ \cup N_2^+ = V(S_1) \cup V(S_2) \cup Y
\quad\text{where}\quad
Y:=\{x^+: x \in N_1, xx^+ \in T_1\} \cup \{x^+: x \in N_2, xx^+ \in T_2\}.
$$
Whenever $x \in Y$ we have $x^+ \notin Y$.
So $|Y| \geq |T_1 \cup T_2|$ and hence
$|V(M)| \geq |N_3|+2s_1+2s_2+|T_1 \cup T_2|$.
Therefore
$$
|T_1\cap T_2| = t_1+t_2-|T_1 \cup T_2|
\geq \sum_{\ell=1,2}(2s_\ell+t_\ell)+|N_3|-|V(M)|\geq 3(\tfrac{1}{2}-\mu)n-n\geq (\tfrac{1}{2}-3\mu)n.
$$
Let $X := \{x \in \ov{N_1 \cup N_2}: xx^+ \in T_1 \cap T_2\}$. 
So $X \subseteq \ov{N_1 \cup N_2}$ and $X^+ \subseteq N_1 \cap N_2$
are disjoint, so $|X|=|X^+| \geq |T_1 \cap T_2|$.
Thus also $|\ov{N_1 \cup N_2} \sm X|,|(N_1 \cap N_2) \sm X^+| \leq n-|X|-|X^+| \leq 6\mu n$.
Since $N_1 \cup N_2$ and $X$ are disjoint, we have $|N_\ell| \leq |N_1 \cup N_2| \leq (\frac{1}{2}+3\mu)n$
for $\ell=1,2$.
We have $|N_1 \sd N_2| = |N_1 \cup N_2| - |N_1 \cap N_2| \leq n-|X^+|-|X| \leq 6\mu n$.
The definition of $X$ implies that $E(M[X^+])=\emptyset$.

Finally, suppose $e(G_c[X]) \neq 0$ for some $c \in \Cols$.
Then as before we obtain a contradiction by adding such an edge $xy$ along with $v_1x^+,v_2y^+$
(of colours $c,c_1,c_2$) and removing $xx^+,y^+y$.
This completes the proof of the claim.
\end{proof}
By Claim \ref{cl:partition1}, we have 
$N_\ell \subseteq V(M)$ and $(\frac{1}{2}-\mu)n \leq |N_\ell|\leq (\frac{1}{2} + 3\mu)n$ for $\ell=1,2$ 
and one of the following cases:

\medskip
\noindent
\emph{Case 1}: $|\ov{N_1} \sd N_2| \leq 6\mu n$,
and writing $M_\ell := M[N_\ell]$ and $X_\ell := V(M_\ell)$
for $\ell=1,2$, we have
$e(M_\ell) \geq (\frac{1}{4}-2\mu)n$
and $X_1 \cap X_2 = \emptyset$ and 
$e(G_c[X_1,X_2]) = 0$ 
for all $c \in \Cols$.

It suffices to show that 
for all colours $c \in \col(M_1 \cup M_2)$, we have
$e_{G_c}(X_1,X_2) \leq 13\mu n^2$.
Indeed, there are at least $2(\frac{1}{4}-2\mu)n = (\frac{1}{2}-4\mu)n$ such colours
so $e_{\bm{G}}(X_1,X_2) \leq 13\mu n^3 + 4\mu n^3=17\mu n^3$.
Also $X_1,X_2$ are disjoint sets of size at least $(\frac{1}{2}-4\mu)n$,
so there is a set $Y$ of size $n/2$
such that $e_{\bm{G}}(Y,\ov{Y}) \leq 30 \mu n^3$,
and therefore $\bm{G}$ is not $30\mu$-stable.

Now let $c \in \col(M_1 \cup M_2)$.
Without loss of generality, there is $ww^+\in E(M_1)$ such that 
$\col(ww^+)=c$. Choose a colour $c_3\in \Cols
$ 
so that $N_3 := N_{G_{c_3}}(w)$
satisfies $|N_3| \geq (\frac{1}{2}-\mu)n$.
So $N_3 \cap X_2 = \emptyset$.
Now let $M':=M-ww^+ + v_1w^+$ 
be the transversal matching with $\col(M') = (\col(M) \sm \{c\}) \cup \{c_1\}$
and consider new vertex pair $(w,v_2)$ and colour pair $(c_3,c_2)$. We have $\{c,c_2,c_3\} \cap \col(M')=\emptyset$. 
The matching $M'$ is maximal, otherwise the maximality of $M$ is contradicted.
So Claim \ref{cl:partition1} applies to $M'$.

Now, $N_3 \cap X_2 = \emptyset$ so $|N_3 \cap N_2| \leq |N_2|-|X_2| \leq (\frac{1}{2}+3\mu)n-(\frac{1}{2}-4\mu)n=7\mu n$
and in particular the second alternative cannot hold. 
Thus
$|\ov{N_3} \sd N_2| \leq 6\mu n$,
and writing $M_\ell' := M'[N_\ell]$ and $X_\ell' := V(M_\ell')$
for $\ell=2,3$, we have
$e(M_\ell') \geq (\frac{1}{4}-2\mu)n$
and $X_2' \cap X_3' = \emptyset$ and 
$E(G_c[X_2',X_3']) = \emptyset$.
We have $|N_1 \sd N_3| \leq |N_1 \sd \ov{N_2}|+|\ov{N_2} \sd N_3| \leq 12\mu n$. Since $M,M'$ differ by two edges we therefore have $|X_1 \sd X_3'| + |X_2 \sd X_2'| \leq 13\mu n$.
Thus $e_{G_c}(X_1,X_2) \leq e_{G_c}(X_3',X_2') \leq 13\mu n^2$,
as required.

\medskip
\noindent
\emph{Case 2}: 
$|N_1 \sd N_2| \leq 6\mu n$, 
and there is $X \subseteq \ov{N_1 \cup N_2} \cap V(M)$ with $X^+ \subseteq N_1 \cap N_2$
such that
$|\ov{N_1 \cup N_2} \sm X|,|(N_1 \cap N_2) \sm X^+| \leq 6\mu n$
such that $|X| \geq (\frac{1}{2}-3\mu)n$ and 
$e(M[X^+]) = 0$
and $e(G_c[X]) = 0$ for all $c \in \Cols$.

It suffices to show that $e_{G_c}(X) \leq 24\mu n^2$ for all $c \in \col(M[X,X^+])$.
Indeed, there are $|X| \geq (\frac{1}{2}-3\mu)n$
such colours, so $e_{\bm{G}}(X) \leq 24\mu n^3+3\mu n^3=27\mu n^3$.
Since $(\frac{1}{2}-3\mu)n \leq |X| \leq n-|X^+| \leq (\frac{1}{2}+3\mu)n$, there is a set $Y$ of size $n/2$ with $e_{\bm{G}}(Y) \leq 40\mu n^3$.
Thus $\bm{G}$ is not $40\mu$-stable.

Let $c \in \col(M[X, X^+])$.
Let $w \in X$ be such that $\col(ww^+)=c$.
Choose a colour $c_3$ from $\Cols$ such that $N_3 := N_{G_{c_3}}(w)$ satisfies $|N_3| \geq (\frac{1}{2}-\mu)n$. 
So $N_3 \cap X = \emptyset$.
Now let $M':=M-ww^++v_1w^+$ 
be the transversal matching with $\col(M')=(\col(M \sm \{c\}) \cup \{c_1\}$
and consider new vertex pair $(w,v_2)$ and colour pair $(c_3,c_2)$.
So $\{c,c_2,c_3\} \cap \col(M')=\emptyset$. 
The matching $M'$ is maximal, otherwise the maximality of $M$ is contradicted.
So Claim \ref{cl:partition1} applies to $M'$.
We have $N_3 \cup N_2 \subseteq \ov{X}$ so $|N_3 \cap N_2| \geq |N_3|+|N_2|-(n-|X|) \geq 2(\frac{1}{2}-\mu)n-n+|X| >(\frac{1}{2}-5\mu)n$,
so in particular the first alternative cannot hold.

So $|N_3 \sd N_2| \leq 6\mu n$,
and there is $X' \subseteq \ov{N_3 \cup N_2}$ with 
$|\ov{N_3 \cup N_2} \sm X'| \leq 6\mu n$
such that $|X'| \geq (\frac{1}{2}-3\mu)n$ and 
$e(M[(X')^+]) = 0$
and $e(G_c[X']) = 0$.
Now,
\begin{align*}
|X' \sd X| &\leq |X' \sd \ov{N_3 \cup N_2}| + |\ov{N_3 \cup N_2} \sd \ov{N_1 \cup N_2}| + |\ov{N_1 \cup N_2} \sd X| \leq 12\mu n + |N_3 \sd N_1|\\
&\leq 12\mu n+|N_3 \sd N_2|+|N_2 \sd N_1| \leq 24\mu n.
\end{align*}
Thus $e_{G_c}(X) \leq 24\mu n^2$, as required.

This completes the proof of Lemma \ref{lm:1matching}.
\end{proof}

\section{Absorbing}\label{sec:absorb}

Recall that $(V)_k$ denotes the set of all $k$-tuples of distinct elements of $V$.
A pair $H=(V,E)$ where $V$ is a set and $E \subseteq (V)_k$ is called a \emph{directed $k$-graph},
while if we allow $E$ to be a multiset, it is a \emph{directed multi-$k$-graph}.
We use the same notation for directed $k$-graphs as for graphs.
The following lemma is the key tool that we will use to build an absorbing structure. Similar ideas have already been used in the papers \cite{Cheng2,Cheng3,Cheng1} of the first author.

\begin{lemma}\label{lm:key}
Let $k,C,n\in \mathbb N$ suppose that $0<1/n\ll \gG \ll \eps \ll 1/k,1/C \leq 1$ and let $m\in [n^C]$ and $t := \gG n$. Let $\bm{H}=(H_1,\ldots,H_t)$ be a collection of directed $k$-graphs and let $\bm{Z}=(Z_1,\ldots,Z_m)$ be a collection of directed multi-$k$-graphs all defined on a common vertex set $V$ of size $n$.
Suppose that $e(H_i) \geq \eps n^k$ for all $i \in [t]$ and
for each $j\in [m]$, we have $|E(Z_j)\cap E(H_i)|\geq \eps n^k$ for at least $\eps t$ indices
 $i\in[t]$.
Then there is a transversal matching $M$ in $\bm{H}$ 
of size at least $(1-\eps^2/4)t$ and $|E(Z_j)\cap E(M)|\geq \eps^2 t/4$ for each $j\in [m]$.
\end{lemma}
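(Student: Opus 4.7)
The plan is a standard alteration argument from the probabilistic method: pick a random almost-transversal edge set with one edge per $H_i$, then delete a small number of edges to eliminate vertex conflicts.

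For each $i \in [t]$ independently, choose an edge $e_i \in E(H_i)$ uniformly at random, and set $X := (e_i : i \in [t])$ (as an indexed family, so each $e_i$ carries its colour $i$). First I would estimate $|X \cap E(Z_j)|$ for a fixed $j \in [m]$. Since $|E(H_i)| \leq n^k$, for every index $i$ with $|E(Z_j) \cap E(H_i)| \geq \eps n^k$ we have $\mathbb{P}[e_i \in E(Z_j)] \geq \eps$. The hypothesis supplies at least $\eps t$ such indices, so $\mathbb{E}[|X \cap E(Z_j)|] \geq \eps^2 t$. As the indicators $\mathbf{1}[e_i \in E(Z_j)]$ are mutually independent, the Chernoff bound gives
$$
\mathbb{P}\bigl[|X \cap E(Z_j)| \leq \tfrac{1}{2}\eps^2 t\bigr] \leq \exp(-\eps^2 t/8) = \exp(-\eps^2 \gG n/8).
$$
Since $m \leq n^C$, a union bound over $j$ shows that, with probability $1-o(1)$, every $Z_j$ satisfies $|X \cap E(Z_j)| \geq \tfrac{1}{2}\eps^2 t$.

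Next I would bound the number of \emph{conflicts}, i.e.\ pairs $\{i,i'\} \in \binom{[t]}{2}$ with $e_i \cap e_{i'} \neq \es$. For fixed $i \neq i'$ the number of edge-pairs in $E(H_i) \times E(H_{i'})$ sharing a vertex is at most $k^2 n^{2k-1}$, so
$$
\mathbb{P}[e_i \cap e_{i'} \neq \es] \leq \frac{k^2 n^{2k-1}}{|E(H_i)|\,|E(H_{i'})|} \leq \frac{k^2}{\eps^2 n}.
$$
Hence the expected number of conflicts is at most $\binom{t}{2} \cdot k^2/(\eps^2 n) \leq \gG^2 k^2 n/(2\eps^2)$, which is at most $\eps^2 t/32$ by the hierarchy $\gG \ll \eps/k$. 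By Markov's inequality the number of conflicts is at most $\eps^2 t/8$ with probability at least $3/4$.

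Fixing an outcome in which both events hold, let $B \subseteq [t]$ be the set of indices involved in at least one conflict; then $|B| \leq 2 \cdot \eps^2 t/8 = \eps^2 t/4$. Deleting the edges indexed by $B$ from $X$ yields a transversal matching $M$ of size at least $(1-\eps^2/4)t$, and for every $j$,
$$
|E(M) \cap E(Z_j)| \geq |X \cap E(Z_j)| - |B| \geq \tfrac{1}{2}\eps^2 t - \tfrac{1}{4}\eps^2 t = \tfrac{1}{4}\eps^2 t.
$$
The main obstacle to watch out for is the union bound over the potentially polynomially many graphs $Z_j$ (up to $n^C$ of them): this forces a quantitative concentration step strong enough to beat $n^C$, which works here only because $t = \gG n$ is linear in $n$ and the indicators within each fixed $j$ are genuinely independent.
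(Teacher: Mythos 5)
Your proof is correct and essentially identical to the paper's: both choose $e_i \in E(H_i)$ uniformly and independently, use Chernoff plus a union bound over $m \leq n^C$ to control $|W \cap E(Z_j)|$, and Markov on the expected number of intersecting pairs. The only cosmetic difference is that you discard every index involved in a conflict (so you tune Markov to $\eps^2 t/8$), whereas the paper discards one edge per intersecting pair with Markov at $\eps^2 t/4$; both yield the same final bound of at most $\eps^2 t/4$ deletions.
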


\begin{proof}
Let $W=\{e_1,\ldots, e_t\}$ be obtained by independently picking $e_i$ from $E(H_i)$, $i\in [t]$, uniformly at random.
For each $i \in [t]$ and $j \in [m]$, let $\chi_{ij}=\ind(e_i\in Z_j)$. By definition, we have $e(H_i)\leq n^k$ for each $i\in[t]$ and thus $\mb{E}(\chi_{ij})=\mb{P}(e_i\in Z_j)=|E(Z_j)\cap E(H_i)|/|E(H_i)|\geq \eps n^k/n^k=\eps$ whenever $|E(Z_j)\cap E(H_i)|\geq \eps n^k$. Therefore
$\mb{E}(|W \cap E(H_j)|)=\mb{E}(\sum_{i \in [t]}\chi_{ij}) = \sum_{i\in[t]}\mathbb{E}(\chi_{ij})\geq \eps^2 t$
for each $j\in [m]$.
Chernoff's inequality and a union bound imply that with probability at least $1-e^{-\sqrt{n}}$, we have
$|W \cap E(H_j)| \geq \eps^2t/2$ for all $j \in [m]$.

Let $Y$ be the number of intersecting pairs of edges in $W$.
So $Y\leq\sum_{ii'\in\binom{[t]}{2}}Y_{ii'}$ where $Y_{ii'}=\ind(V(e_i)\cap V(e_{i'})\neq\es)$.
Let $I_{ii'}$ be the number of intersecting pairs of edges $\{a_i,a_{i'}\}$ with $a_h \in E(H_h)$ for $h=i,i'$.
So $|I_{ii'}|\leq kn^{2k-1}$.
Hence, for every $ii'\in\binom{[t]}{2}$, we have
$$
\mathbb{E}(Y_{ii'})=\frac{|I_{ii'}|}{e(H_i)e(H_{i'})}\leq \frac{kn^{2k-1}}{\eps^2n^{2k}} = \frac{k}{\eps^2 n}
\quad\text{and so}\quad
\mathbb{E}(Y)\leq\sum_{ii'\in\binom{[t]}{2}}\mathbb{E}(Y_{ii'})\leq \binom{t}{2}\frac{k}{\eps^2 n}\leq\frac{k}{2\eps^2}\gG^2n\leq \frac{\eps^2t}{8}.
$$
Markov's inequality implies that $\mathbb{P}(Y\leq \eps^2t/4)\geq 1/2$.
Thus a union bound implies that there exists $W$ with $|W|=t$, $|W \cap E(Z_j)| \geq \eps^2t/2$ for all $j \in [m]$ and there are at most $\eps^2t/4$ pairs of intersecting edges in $W$.
The required $M$ is obtained by
deleting one edge from each intersecting pair of $W$.
\end{proof}

Let $\bm{G}=(G_1,\ldots,G_n)$ be a graph collection on a common vertex set $V$ of size $n$.
For every pair $x,y$ of distinct vertices in $V$, we define $L(xy):=\{i \in [n] : xy\in E(G_i)\}$ to be the set of colours appearing on $xy$.
Let $\eps>0$ and suppose that $\dD(G) \geq (\frac{1}{2}-\eps^3)n$.
Recall from Lemma~\ref{lm:char} and Section~\ref{sec:stabcol} that
each $G_i$ is either $\eps^3$-nice or we have defined a characteristic partition $(A_i,B_i,C_i)$ of $G_i$.
Thus for every $i \in [n]$ and $v \in V$, either $v$ is $G_i$-good, or not. 

\begin{figure}
\begin{center}
\includegraphics{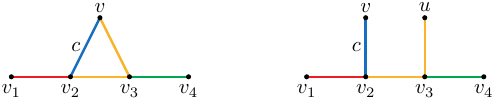}
\caption{A $c$-absorbing path of $(v,v)$ and a $c$-absorbing path of $(v,u)$ for $v \neq u$.}\label{fig:abs}
\end{center}
\end{figure}

\begin{defn}[absorbing path, absorbing cycle]\label{def:absorb}

Given any two not necessarily distinct vertices $u,v \in V$, a transversal path $P=v_1v_2v_3v_4$ with $u,v \notin V(P)$ is called a \emph{$c$-absorbing path of $(v,u)$} if $c\in L(v_2v)$ and $\col(v_2v_3)\in L(v_3u)$
(see Figure~\ref{fig:abs}).

Given  $\dD,\dD',\gG,\gG'>0$, a transversal cycle $C=v_1v_2\ldots v_{t}$ is an \emph{absorbing cycle} with parameters $(\dD,\dD',\gG,\gG')$ if $t\leq \gG n$ and there exists a colour set $\Cols$ of size at least $\dD n$ such that
\begin{itemize}
\item[(i)] given any colour $c\in \Cols$ and any $G_c$-good vertex $v$, for all but at most $\dD' n$ vertices $u\in V$, there are at least $\gG' n$ disjoint $c$-absorbing paths of $(v,u)$ inside $C$;
\item[(ii)] given any colour $c\in \Cols$, for all but at most $\dD' n$ $G_c$-good vertices $v$, there are at least $\gG' n$ disjoint $c$-absorbing paths of $(v,v)$ inside $C$.
\end{itemize}
\end{defn}

It is easy to see that if $C$ is a transversal cycle containing a $c$-absorbing path of $(v,v)$ where $v\notin V(C)$ and $c\notin \col(C)$, then there exists a transversal cycle $C'$ such that $V(C')=V(C)\cup \{v\}$ and $\col(C')=\col(C)\cup \{c\}$ (insert $v$ between $v_2$ and $v_3$). Similarly, if $C$ is a transversal cycle containing a $c$-absorbing path of $(v,u)$ where $u,v$ are the endpoints of a transversal path $P$, $V(P)\cap V(C)=\es$ and $\col(P),\{c\},\col(C)$ are pairwise disjoint, then there exists a transversal cycle $C'$ such that $V(C')=V(C)\cup V(P)$ and $\col(C')=\col(C)\cup \col(P)\cup \{c\}$ (insert $P$ between $v_2$ and $v_3$). 

Using Lemma~\ref{lm:key} it is quite straightforward to find an absorbing cycle when $G$ is strongly stable, using the many nice graphs $G_i$ guaranteed by strong stability.

\begin{lemma}\label{lm:absorbcycle}
Let $0<1/n\ll \lL,\mu \ll \gG,\aA\ll 1$ and 
suppose that $\bm{G}=(G_1,\ldots,G_n)$ is a graph collection defined on a common vertex set $V$ of size $n$ with $\dD(\bm{G})\geq (\frac{1}{2}-\mu)n$. If $\bm{G}$ is $(\gG,\aA)$-stable, then there exists an absorbing cycle with parameters $(1,0,\lL,\lL^2)$.
\end{lemma}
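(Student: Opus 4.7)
The plan is to build $C$ as the concatenation of many vertex-disjoint and colour-disjoint transversal $4$-paths that together carry enough absorbing structure for every target triple $(c,v,u)$. Strong stability is the key hypothesis: it supplies a set $\Cols^* \sub [n]$ of at least $\gG n$ $\aA$-nice colours, and niceness is exactly what produces the many edges between two $(\tfrac12-\aA)n$-sized sets required by the absorbing conditions $c \in L(v_2v)$ and $\col(v_2v_3) \in L(v_3u)$.

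First I would establish a global count: for each target $(c,v,u)$ with $v$ a $G_c$-good vertex, there are $\Omega(\gG\aA n^7)$ $c$-absorbing $4$-paths of $(v,u)$ in $\bm{G}$. Indeed, for any nice $c^* \in \Cols^* \sm \{c\}$, applying niceness of $G_{c^*}$ to the sets $N_{G_c}(v)\sm\{u\}$ and $N_{G_{c^*}}(u)\sm\{v\}$ (both of size at least $(\tfrac12-\aA)n$ by $\dD(\bm G)\geq(\tfrac12-\mu)n$) yields $\geq \aA n^2$ pairs $(v_2,v_3)$ with $v_2v \in G_c$, $v_3u \in G_{c^*}$ and $v_2v_3 \in G_{c^*}$; each such triple extends to $\Omega(n^4)$ transversal $4$-paths $v_1v_2v_3v_4$ by choosing the outer vertices and further distinct outer colours via the degree condition.

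Next I would apply Lemma~\ref{lm:key} with $k=4$ and $t := \lL n/3$. Pick $t$ distinct nice middle colours $c_1^2,\ldots,c_t^2 \in \Cols^*$ (possible as $t \leq \gG n$) and $2t$ further distinct outer colours $c_i^1, c_i^3 \in [n] \sm \Cols^*$, so that all $3t$ chosen colours are distinct. Let $H_i := \{(v_1,v_2,v_3,v_4) \in (V)_4 : v_1v_2 \in G_{c_i^1},\ v_2v_3 \in G_{c_i^2},\ v_3v_4 \in G_{c_i^3}\}$ and, for each quadruple $(c,v,u,c^*)$ with $v$ $G_c$-good and $c^* \in \Cols^*$, define $Z_{c,v,u,c^*} := \{(v_1,v_2,v_3,v_4) \in (V)_4 : v_2v \in G_c,\ v_3u \in G_{c^*},\ \{v,u\} \cap \{v_1,\ldots,v_4\} = \es\}$. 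The counting step above (applied with $c_i^2$ as the nice middle colour) gives $|E(Z_{c,v,u,c^*}) \cap E(H_i)| \geq \eps n^4$ for every $i$ and every such quadruple. Lemma~\ref{lm:key} with $m=O(n^4)$ targets then produces a transversal matching $M = \{e_1,\ldots,e_{(1-o(1))t}\}$ with $|E(Z_{c,v,u,c^*}) \cap E(M)| \geq \eps^2 t/4$ for every target $(c,v,u,c^*)$. To convert this weighted bound into the unweighted statement that at least $\lL^2 n$ matching elements $e_i$ are genuinely $c$-absorbing for $(v,u)$ (i.e., additionally satisfy $c_i^2 \in L(v_3(e_i)u)$), I would select $c_1^2,\ldots,c_t^2$ as a uniformly random subset of $\Cols^*$ before applying Lemma~\ref{lm:key}: summing the per-$c^*$ guarantee gives $\sum_{i : v_2(e_i)v \in G_c} |L(v_3(e_i)u) \cap \Cols^*| \geq \gG\eps^2 tn/4$, whence a Chernoff-type concentration argument (after decoupling the random middle colours from $M$) shows that the random $c_i^2$ lands in $L(v_3(e_i)u)$ for at least $\lL^2 n$ indices $i$, as required.

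Finally I would order the $4$-paths of $M$ cyclically and bridge consecutive pairs using paths of $O(1)$ new vertices from $V \sm V(M)$ and fresh colours from $[n] \sm \Cols^*$ that are unused in $M$ or in earlier bridges; the degree condition $\dD(\bm{G}) \geq (\tfrac12-\mu)n$ lets each bridge be completed greedily via common neighbourhoods, and the total length of the resulting cycle remains at most $\lL n$. Setting $\Cols := [n]$ yields the required absorbing cycle with parameters $(1,0,\lL,\lL^2)$. The main technical obstacle I anticipate is this last conversion from Lemma~\ref{lm:key}'s weighted bound to the per-target unweighted bound: it requires a careful decoupling of the random middle colours $c_i^2$ from the matching $M$, which Lemma~\ref{lm:key} itself produces in a way that depends on the $c_i^2$.
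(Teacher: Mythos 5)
The skeleton of your construction (one $4$-path per fixed colour triple, Lemma~\ref{lm:key}, then joining the paths into a short cycle) is the same as the paper's, but there is a genuine gap at the crucial step, and you flag it yourself without resolving it. Your target sets $Z_{c,v,u,c^*}$ certify $v_3u\in G_{c^*}$ for an \emph{external} nice colour $c^*$, not for the middle colour $c_i^2$ actually carried by $H_i$; hence an element of $E(Z_{c,v,u,c^*})\cap E(M)$ need not be a $c$-absorbing path of $(v,u)$, and your proposed repair---choosing $c_1^2,\ldots,c_t^2$ uniformly at random and then ``decoupling'' them from $M$---is not carried out and is not routine, precisely because Lemma~\ref{lm:key} produces $M$ after, and depending on, the $c_i^2$: conditioning on $e_i\in M$ already forces $v_2(e_i)v_3(e_i)\in G_{c_i^2}$, so $c_i^2$ is far from uniform given $M$, and the averaged bound $\sum_i|L(v_3(e_i)u)\cap\Cols^*|\gtrsim \gG\eps^2 tn$ does not by itself yield $\lL^2 n$ genuinely absorbing indices. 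The gap is avoidable, and the fix is exactly what the paper does: build the absorbing condition into the target sets, i.e.\ let $Z_i(c,v,u)$ consist of those tuples of $H_i$ with $v_2v\in G_c$ \emph{and} $v_3u\in G_{c_i^2}$, and set $Z(c,v,u):=\bigcup_i Z_i(c,v,u)$ as a multiset union. Niceness of the middle graph $G_{c_i^2}$ applied to the sets $N_{G_c}(v)$ and $N_{G_{c_i^2}}(u)$ (each of size at least $(\frac12-\mu)n$) still gives at least $\aA n^2$ admissible middle edges, so $|E(Z(c,v,u))\cap E(H_i)|\geq \aA n^4/5$ for every $i$, and Lemma~\ref{lm:key} then directly returns, for every target, at least $\Omega(\aA^2\lL n)\geq \lL^2 n$ vertex-disjoint matched tuples that \emph{are} $c$-absorbing paths of $(v,u)$---no randomisation of the middle colours and no concentration argument is needed. (Note also that Definition~\ref{def:absorb} additionally requires absorbing paths of $(v,v)$; the same counting covers this by allowing $u=v$, which your write-up omits.)

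The bridging step is a second genuine flaw. You connect consecutive $4$-paths using only fresh colours from $[n]\sm\Cols^*$ and claim the degree condition lets each bridge be completed greedily via common neighbourhoods. With $\dD(\bm{G})\geq(\frac12-\mu)n$ alone this fails: the graphs outside $\Cols^*$ may all be, say, unions of two disjoint cliques on one and the same near-equipartition $X\cup Y$, in which case no path using only these colours ever crosses between $X$ and $Y$, so two endpoints lying on opposite sides cannot be bridged at all, whatever the bridge length. You must spend one reserved nice colour on each bridge, as the paper does (it reserves the nice colours in $[\lL n/2+1,\lL n]$ for this purpose): niceness supplies at least $\aA n^2$ edges of that colour between the two endpoint neighbourhoods, and the other bridge edges can then use arbitrary unused colours. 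With these two repairs your construction does yield an absorbing cycle with parameters $(1,0,\lL,\lL^2)$.
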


\begin{proof}
Without loss of generality, we assume that $G_1,\ldots,G_{\gG n}$ are $\aA$-nice and $6|\lL n$. We divide the colour set $[\lL n/2]$ into consecutive sets $\{1,2,3\},\{4,5,6\},\ldots,\{\lL n/2-2,\lL n/2-1,\lL n/2\}$ of
three. For each $i \in I := [\frac{\lL n}{6}]$, we define 
$$
\mc{F}_{i}:=\{(v_1,v_2,v_3,v_4) \in (V)_4: 3(i-1)+\ell \in L(v_\ell v_{\ell+1}) \ \forall \ell\in[3]\}. 
$$
Our minimum degree condition implies that $e(\mc{F}_i) \geq n((\frac{1}{2}-\mu)n)^3 \geq n^4/9$.
Now we fix a colour $c\in [n]$ and two not necessarily distinct vertices $v,v'\in V$
(which are not required to be $G_c$-good). We let $Z_i(c,vv')$ be the collection of $(v_1,v_2,v_3,v_4) \in \mc{F}_i$ for which $P=v_1v_2v_3v_4$ with the given colours is a $c$-absorbing path of $(v,v')$. Since the graph $G_{3i+2}$ is $\aA$-nice, there are at least $\aA n^2$ ways to choose $v_2v_3$ with $v_2\in N_{G_c}(v), v_3\in N_{G_{3i+2}}(v')$ and $v_2v_3\in E(G_{3i+2})$.
When $v_2,v_3$ are fixed, there are at least $(\frac{1}{2}-\mu)^2n^2$ ways to choose $v_1$ and $v_4$. Thus $|Z_i(c,vv')| \geq (\frac{1}{2}-\mu )^2n^2 \aA n^2\geq \aA n^4/5$.

We can apply Lemma~\ref{lm:key} with $\bm{H}:=(\mc{F}_i: i \in I)$,
$\bm{Z} :=(Z(c,vv') := \bigcup_{i \in I}Z_i(c,vv'): c \in [n], vv'\in V^2)$,
where this is a multiset union,
and parameters $|I|=t$ and $\aA/5=\eps$,
to obtain a transversal matching $M$ in $\bm{H}$ of size at least $(1-\aA^2/100)\lL n/6$
such that $|E(Z(c,vv')) \cap E(M)| \geq \aA^2\lL n/600$ for all $c \in [n]$ and $vv' \in V^2$.
That is, there is $I' \subseteq I$ with $|I'| \geq (1-\aA^2/100)\lL n/6$ such that for each $i \in I'$ there is
a transversal path $P_i=v_1^iv_2^iv_3^iv_4^i$ with $3(i-1)+\ell\in L(v^i_\ell v^i_{\ell+1})$ for all $\ell\in[3]$,
and for every $c \in [n]$ and $vv' \in V^2$, there are at least $\aA^2\lL n/600$ paths $P_i$
which are $c$-absorbing of $(v,v')$.

Relabel indices so that $I'=[s]$ where $s := (1-\aA^2/100)\lL n/6$.
We will connect the paths $P_1,\ldots,P_s$ one by one into a transversal cycle $C$. 
We first connect $P_1$ and $P_2$ into a single transversal path $P_1xyP_2$. 
For this, choose distinct unused colours $c_1,c_2\in [\lL n+1,n]$ and $c_3\in [\lL n/2+1,\lL n]$. Let $U:=V\sm V(\bigcup_{i \in [s]}V(P_i))$. Note that $d_{G_{c_1}}(v_4^{1},U),d_{G_{c_2}}(v_1^{2},U) \geq (\frac{1}{2}-\mu-\lL)n \geq (\frac{1}{2}-\aA)n$. 
Since $c_3\in [\lL n/2+1,\lL n]$, the graph $G_{c_3}$ is $\aA$-nice, so there are at least $\aA n^2$ $c_3$-coloured edges between $N_{G_{c_1}}(v_4^{1},U)$ and $N_{G_{c_2}}(v_1^{2},U)$. Thus we can choose a $c_3$-coloured edge $xy$ between $N_{G_{c_1}}(v_4^{1},U)$ and $N_{G_{c_2}}(v_1^{2},U)$ such that $C=P_1xyP_2$ is a transversal path with colour set $\col(P_1)\cup \col(P_2)\cup \{c_1,c_2,c_3\}$.

We repeat the process above for each pair $(P_i,P_{i+1})$ for $i\in[s]$, where $P_{s+1} := P_1$. Each time we use two unused colours from $[\lL n+1,n]$ and one unused colour from $[\lL n/2+1,\lL n]$. This is possible since $2s \leq \lL n/3$. The number of used vertices at each step is at most $6s \leq \lL n$ so we have the same bounds on degrees into the set of used vertices.
By construction, $C$ is an absorbing cycle with parameters $(1,0,6s/n,\aA^2\lL/600)$
and hence with parameters $(1,0,\lL,\lL^2)$ since $\lL \ll \aA$.
\end{proof}

To complete this section, we find an absorbing cycle when $\bm{G}$ is weakly stable.
This is much more involved than the strongly stable case.
Again we use Lemma~\ref{lm:key} to construct the cycle, using the many pairs of extremal graphs which are all highly crossing, as guaranteed by weak stability.

\begin{lemma}\label{lm:absorbcycle2}
Let $0<1/n \ll \lL,\mu,\eps \ll \dD<1$.
Suppose that $\bm{G}=(G_1,\ldots,G_n)$ is a graph collection defined on a common vertex set $V$ where $|V|=n$ and $\dD(\bm{G})\geq (\frac{1}{2}-\mu)n$.
If $\bm{G}$ is $(\eps,\dD)$-weakly stable, then there exists an absorbing cycle $C$ with parameters $(\dD/3,\sqrt{\eps},\lL,\lL^2)$.
\end{lemma}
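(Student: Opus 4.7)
The plan is to follow the strongly-stable construction of Lemma~\ref{lm:absorbcycle}: build many vertex-disjoint length-3 transversal paths $P_i$, each equipped with a middle colour chosen so that $P_i$ can serve as a $c$-absorbing path of many pairs $(v,u)$, and then connect the $P_i$ into a single transversal cycle using a small number of unused colours. The crucial difference from Lemma~\ref{lm:absorbcycle} is that in the weakly-stable case we no longer have nice graphs available to play the role of the middle colour; we instead exploit the quadrant structure coming from pairs of crossing extremal graphs provided by the cross graph $C^{\eps,\dD}_{\bm{G}}$ and Observation~\ref{obs:cross}.

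First I would set up the central colour set. Since $e(C^{\eps,\dD}_{\bm{G}}) \geq \dD n^2$, a standard averaging argument produces $\Cols_*\subseteq [n]$ with $|\Cols_*|\geq \dD n/2$ such that every $c\in \Cols_*$ is $\eps$-extremal and has at least $\dD n/2$ crossing partners; the colour set in the absorbing cycle will be $\Cols := \Cols_*$, which has size $\geq \dD n/3$ as required. Next I would choose a random middle-colour set $\Cols_M\subseteq[n]\sm\Cols_*$ of size about $\lL n/12$, and, using the remaining colours, form $|\Cols_M|$ disjoint colour triples $(c_1^i,c_2^i,c_3^i)$ with $c_2^i\in\Cols_M$. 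By Lemma~\ref{lm:chernoff}, with high probability, for every $c\in\Cols_*$ at least $(\dD/4)|\Cols_M|$ of the triples have $c_2^i$ a crossing partner of $c$.

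For each $i$ let $\mc{F}_i := \{(v_1,\ldots,v_4)\in(V)_4 : v_\ell v_{\ell+1}\in E(G_{c_\ell^i})\text{ for }\ell\in[3]\}$, so $e(\mc{F}_i)\geq n^4/9$ by the minimum-degree assumption. For each triple $(c,v,u)$ with $c\in\Cols_*$, $v\in A_c\cup B_c$ and $u\in V$ (possibly $u=v$), define $Z_{c,v,u}$ to be the set of $(v_1,\ldots,v_4)\in(V)_4$ with $v,u\notin\{v_1,\ldots,v_4\}$, $vv_2\in E(G_c)$ and $v_2v_3,v_3u\in E(G_{c'})$ for some $c'\in\Cols_M$; the tuples in $Z_{c,v,u}\cap\mc{F}_i$ are exactly the $\mc{F}_i$-paths that are $c$-absorbing of $(v,u)$. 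The technical core of the proof is the claim that whenever $c_2^i$ is a crossing partner of $c$ and $u$ is $G_{c_2^i}$-good (and in the self-case $v$ is also $G_{c_2^i}$-good), one has $|Z_{c,v,u}\cap \mc{F}_i|\geq \eps n^4$: writing $P_c^v\in\{A_c,B_c\}$ for the part containing almost all of $N_{G_c}(v)$, and $Q_{c_2^i}^u,R_{c_2^i}^{v_3}\in\{A_{c_2^i},B_{c_2^i}\}$ analogously for $u$ and $v_3$ (the choice determined by whether the respective graphs are $(\eps,\ECone)$- or $(\eps,\ECtwo)$-extremal), Observation~\ref{obs:cross} yields $|P_c^v\cap R_{c_2^i}^{v_3}|\geq \dD n/4$; subtracting the $O(\eps n)$ vertices in each neighbourhood not belonging to the expected part leaves $\geq \dD n/5$ choices of $v_2$, and multiplying by $\geq (1/2-2\eps)n$ choices of typical $v_3\in N_{G_{c_2^i}}(u)$ and by $\geq (1/2-\mu)n$ choices each of $v_1,v_4$ gives the bound. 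A double count using $|C_{c'}|\leq 2\eps n$ for every extremal $c'$ then shows that for each $(c,v)$, all but $\sqrt{\eps}n$ vertices $u$ (and in the self-case all but $\sqrt{\eps}n$ such $v$) satisfy the good-triple condition for at least $\eps|\Cols_M|$ values of $i$.

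I next apply Lemma~\ref{lm:key} with $\bm{H}:=(\mc{F}_i)$, $\bm{Z}:=(Z_{c,v,u})$ restricted to the surviving $(c,v,u)$, and $t := |\Cols_M|$, obtaining a transversal matching of vertex-disjoint $\mc{F}_i$-paths $P_i$ in which each surviving $Z_{c,v,u}$ is hit at least $\eps^2|\Cols_M|/4\geq \lL^2 n$ times, and then connect the $P_i$ into a single transversal cycle $C$ via three unused colours per gluing step, as in Lemma~\ref{lm:absorbcycle}. The absence of a nice graph for the middle of each gluing is handled by picking all three gluing colours from $\Cols_*$ and invoking the quadrant structure again to produce enough edges between the two large neighbourhoods that must be joined. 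The resulting cycle $C$ has length at most $\lL n$ and inherits the required $(\dD/3,\sqrt{\eps},\lL,\lL^2)$-absorbing property. The main obstacle will be the quadrant-based case analysis: uniformly over the four combinations of extremality types of $G_c, G_{c_2^i}$ and over the parts containing $v$ and $u$, one must verify that the structural lower bounds actually deliver $\eps n^4$ absorbing tuples, with particular care when $P_c^v$ and $R_{c_2^i}^{v_3}$ coincide or are disjoint; a parallel but simpler extremal-colour argument is needed for the gluing edges.
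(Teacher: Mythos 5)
Your overall strategy matches the paper's: define a family $\mc{F}_i$ of candidate length-three paths keyed to disjoint colour triples, show via the quadrant structure from Observation~\ref{obs:cross} that for any crossing pair $(c,j)$ with $u$ being $G_j$-good there are $\Omega(n^4)$ paths in $\mc{F}_i$ that are $c$-absorbing of $(v,u)$, apply Lemma~\ref{lm:key}, and glue the resulting matching of paths into a cycle. The quadrant counting in your ``technical core'' is essentially the paper's Claim in the proof of Lemma~\ref{lm:absorbcycle2}, and the double-count of bad vertices via $|C_{c'}|\leq 2\eps n$ parallels the paper's bound on $|T_i|$. However, two places in your proposal have genuine gaps.

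First, your choice of middle-colour set does not work. You take $\Cols_*$ to be a set of colours that are $\eps$-extremal with at least $\dD n/2$ crossing partners, and then sample $\Cols_M$ from $[n]\sm\Cols_*$. But the crossing partners of a colour $c\in\Cols_*$ may live almost entirely inside $\Cols_*$; for instance, if the cross graph $C^{\eps,\dD}_{\bm{G}}$ is (say) a clique on roughly $\sqrt{2\dD}n$ vertices and $\Cols_*$ is that whole clique, then $[n]\sm\Cols_*$ contains no crossing partners of any $c\in\Cols_*$ at all, and your Chernoff estimate that ``at least $(\dD/4)|\Cols_M|$ of the triples have $c_2^i$ a crossing partner of $c$'' is simply false. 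The paper avoids this by first passing to a subgraph $H$ of $C^{\eps,\dD}_{\bm{G}}$ with $\dD(H)\geq\dD n$, then sampling the middle colours $\ms{U}$ uniformly \emph{from $V(H)$} (with probability $\kK=\lL/14$), and taking the absorption colour set $\Cols$ to be a large subset of $V(H)\sm\ms{U}$. Because $H$ has high minimum degree, each $c\in\Cols$ still has $\geq\dD n/2$ neighbours in $V(H)$ even after discarding $\ms{U}$, and the Chernoff bound applies to $N_H(c)\cap\ms{U}$. You would need to restructure the colour partition along these lines.

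Second, the gluing step is underspecified and, as written, does not go through. You propose to glue consecutive paths ``via three unused colours $\ldots$ invoking the quadrant structure.'' In the strongly stable Lemma~\ref{lm:absorbcycle}, a single middle colour $c_3$ is used for the glue edge $xy$ between $N_{G_{c_1}}(v_4^1)$ and $N_{G_{c_2}}(v_1^2)$, and it is the niceness of $G_{c_3}$ that produces this edge. Without niceness, the two neighbourhoods are each concentrated in one half of $V$ (determined by the characteristic partitions of $G_{c_1}$ and $G_{c_2}$), and those halves need not lie on opposite sides (or the same side) of $G_{c_3}$'s partition in the way that would make $G_{c_3}$ dense between them; quadrant structure only helps if both $c_1c_3$ and $c_2c_3$ are cross-graph edges, a condition you never arrange. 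The paper sidesteps this cleanly by using four colours: it picks $c_1c_2\in E(H)$ and spends two extra colours $c_3,c_4$ to route through vertices $x,y$ that are $G_{c_1}$-good and $G_{c_2}$-good respectively, after which Observation~\ref{obs:cross} directly yields $\geq\dD n/5$ choices of $z\in N_{G_{c_1}}(x)\cap N_{G_{c_2}}(y)$. You should adopt this four-colour glue or explicitly arrange for the three glue colours to be pairwise (or star-wise) crossing.
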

\begin{proof}
For each $j\leq n/3$, define the following family.
$$
\mc{F}_j=\{(v_1,v_2,v_3,v_4) \in (V)_4: 3(j-1)+\ell \in L(v_\ell v_{\ell+1}) \ \forall \ell\in[3]\}.
$$
We first prove the following claim, which requires only the minimum degree condition on $\bm{G}$.
\begin{claim}\label{cl:num-absorb}
Let $ij \in E(C^{\eps,\dD}_{\bm{G}})$ where $j+1 \leq 3n$ 
and let $u$ be a $G_j$-good vertex and $v$ be a $G_i$-good vertex.
Let $Z_j(i,vu)$ be the collection of $(v_1,v_2,v_3,v_4) \in \mc{F}_{(j+1)/3}$ where $P=v_1v_2v_3v_4$ is an $i$-absorbing path of $(v,u)$.
Then $|Z_j(i,vu)| \geq 2^{-7}\dD^2n^4$.
\end{claim}
\begin{proof}[Proof of Claim]
Fix any such $i,j,u,v$.
By the definition of characteristic partition, we have $d_{G_i}(v,A_i)\geq (\frac{1}{2}-2\eps)n$ or $d_{G_i}(v,B_i)\geq (\frac{1}{2}-2\eps)n$. 
Without loss of generality, we assume that the former case holds since the latter case can be proved similarly. 
Since 
the graphs $G_i$ and $G_{j}$ are $\dD$-crossing, we have $|A_i\cap A_{j}|\geq \dD n/4$ and $|A_i\cap B_{j}|\geq \dD n/4$ by Observation \ref{obs:cross}. Since $d_{G_i}(v,A_i)\geq (\frac{1}{2}-2\eps)n$, we have $d_{G_i}(v,A_{j})\geq |A_i\cap A_{j}|-(|A_i|-d_{G_i}(v,A_i))\geq \dD n/4-\eps n\geq \dD n/5$ by $\eps \ll \dD$. Similarly, we have $d_{G_i}(v,B_{j})\geq \dD n/5$. 
Since $u$ is $G_{j}$-good, $u \in A_j \cup B_j$.
We divide the proof into the following cases:

\medskip
\noindent
\emph{Case 1}: $u\in A_{j}$ and $G_{j}$ is $(\eps,\ECone)$-extremal.
In this case, in the graph $G_{j}$, every vertex in $A_{j}$ is adjacent to all but at most $\eps n$ vertices in $A_{j}$. Thus 
$J := G_{j}[N_{G_i}(v,A_{j})]$ is an almost complete graph of order at least $\dD n/5$ in the sense that each vertex in it is adjacent to all but at most $\eps n$ vertices.
Any choice of $x \in V(J)$, $y \in N_{G_{j}}(u) \cap N_J(x)$, $x' \in N_{G_{j-1}}(x)$ and $y' \in N_{G_{j+1}}(y)$
yields an $i$-absorbing path $x'xyy'$ of $(v,u)$ in $\mc{F}_{(j+1)/3}$ (so its colours are, in order, $j-1,j,j+1$, and $i \in L(xv)$ and $j \in L(yu)$).
The number of such paths is therefore at least $\dD n/5 \cdot (\dD n/5-2\eps n) \cdot (\frac{1}{2}-\mu)^2n^2 \geq 2^{-7}\dD^2 n^4$.

\medskip
\noindent
\emph{Case 2}: $u\in A_j$ and $G_j$ is $(\eps,\ECtwo)$-extremal.
In this case, in the graph $G_j$, every vertex in $A_j$ is adjacent to all but at most $\eps n$ vertices in $B_j$ 
and every vertex in $B_j$ is adjacent to all but at most $\eps n$ vertices in $A_j$. 
Thus $J := G_j[N_{G_i}(v,A_j),N_{G_j}(u,B_j)]$ is a bipartite graph with each part of order at least $\dD n/5$ which is almost complete in the sense that each vertex is adjacent to all but at most $\eps n$ vertices from other side. 
Any choice of $x \in V(J) \cap A_j$, $y \in N_{G_j}(u) \cap N_J(x)$, $x' \in N_{G_{j-1}}(x)$ and $y' \in N_{G_{j+1}}(y)$
yields an $i$-absorbing path $x'xyy'$ of $(v,u)$ in $\mc{F}_{(j+1)/3}$.
The same lower bound from the previous case applies to the number of such choices.

The remaining cases when $u \in B_j$ are identical, so we omit them.
\end{proof}

For any vertex $v$, recall Definition~\ref{def:cross} and let 
$$
\ms{G}_v :=\{i\in [n]: v\text{ is }G_i\text{-good}\}.
$$
Since $e(C^{\eps,\dD}_{\bm{G}})\geq \dD n^2$, there is a subgraph $H$ of $C^{\eps,\dD}_{\bm{G}}$ such that $\dD(H)\geq \dD n$, so in particular, $|V(H)| > \dD n$.  
For each $i\in V(H)$, define the set $T_{i}:=\{u \in V: |N_{H}(i) \sm \ms{G}_u|\geq \dD n/2 \}$. Note that since $|C_i|=2\eps n$ for each $i$ where $G_i$ is $\eps$-extremal, we have $|T_{i}|\dD n/2\leq \sum_{u\in V}|[n] \sm \ms{G}_u| = \sum_{j \in [n]}|C_j| \leq 2\eps  n^2$ and thus $|T_{i}|\leq 4\eps n/\dD \leq \sqrt{\eps} n/2$ since $\eps \ll \dD$. Let $\ov{T}_{i}:=V\sm T_{i}$. For each $i\in V(H)$ and $u\in \ov{T}_{i}$, we have $|\ov{T}_{i}|\geq (1-\sqrt{\eps}/2)n$ and $|\ms{G}_u\cap N_H(i)|\geq \dD n/2$.

Now we independently and randomly select vertices from $V(H)$ with probability $\kK := \lL/14$ to obtain a set $\ms{U}$ of colours. 
A Chernoff bound implies that, with high probability,
\begin{itemize}
\item[(i)] $\kK |V(H)|/2 \leq t := |\ms{U}| \leq 2\kK |V(H)|$;
\item[(ii)] for every $i\in V(H)$ and $u\in \ov{T}_{i}$, we have $|\ms{G}_u\cap N_H(i,\ms{U})| \geq \dD \kK n/4$.
\end{itemize}
Fix such a $\ms{U}$.
By relabelling colours, we assume that $\ms{U}=\{3j-1: j \in [t]\}$.
Let $\ov{\ms{U}}:=V(H)\cap [3t+1,n]$. 
So $\ms{U} \cap \ov{\ms{U}} = \emptyset$ and $|\ov{\ms{U}}| \geq \dD n/2$ 
and $\dD(H[\ov{\ms{U}}]) \geq \dD n/2$ since $\lL \ll \dD$.
For each colour $j \in [t]$, let $I_j$ be the collection of $(i,vu) \in [n]\times V^2$
with $i \in \ov{\ms{U}}$, $u \in \ov{T}_i$, $j \in \ms{G}_u \cap N_H(i,\ms{U})$ and $v$ a $G_i$-good vertex.

Given $i$ and $u$, the number of choices of $j$ is at least $\dD\kK n/4$ by~(ii).
Since $i,j \in V(H)$ and $u$ is $G_j$-good, Claim~\ref{cl:num-absorb} implies that
there are at least $2^{-7}\dD^2 n^4$ $i$-absorbing paths of $(v,u)$ whose ordered vertex set is in $\mc{F}_{(j+1)/3}$.
We can apply Lemma~\ref{lm:key} with
$\bm{H} := \{\mc{F}_j: j \in [t]\}$ and the directed multi-$4$-graph collection
$$
\bm{Z} :=(Z(i,vu) := \textstyle\bigcup_{j \in \ms{G}_u \cap N_H(i,\ms{U})}Z_j(i,vu) : i \in \ov{\ms{U}}, u \in \ov{T}_i, v \text{ is }G_i\text{-good})
$$ 
and with $2^{-7}\dD^2$ playing the role of $\eps$.
Now, $e(\mc{F}_j) \geq n^4/9$ for all $j \in [t]$ and
$\bm{H}$ contains $\kK\dD n/2 \leq t \leq 2\kK n$ graphs by~(i), so $t/n \ll 2^{-7}\dD^2$.
For each $(i,vu)$ for which $Z(i,vu) \in \bm{Z}$ and every fixed $j \in \ms{G}_u \cap N_H(i,\ms{U})$,
we have $(j+1)/3 \in [t]$ and $|E(Z(i,vu)) \cap E(\mc{F}_{(j+1)/3})| \geq 2^{-7}\dD^2 n^4$.
The number of such $j$ is at least $\dD\kK n/4 \geq \dD t/8$ by~(ii).
Thus there is a transversal matching $M$ in $\bm{H}$ of size at least $(1-2^{-16}\dD^4)t$
such that $|E(Z(i,vu)) \cap E(M)| \geq 2^{-16}\dD^4 t$ for all $Z(i,vu) \in \bm{Z}$.
That is, there is $I \subseteq [t]$ with $|I| \geq (1-2^{-16}\dD^4)t$ such that for each $j \in I$
there is a path $P_j=v_1^jv_2^jv_3^jv_4^j$ in $\mc{F}_{(j+1)/3}$
and for every $i \in \ov{\ms{U}}$, $u \in \ov{T}_i$ and $v$ which is $G_i$-good,
there are at least $2^{-16}\dD^4 t$ paths $P_j$ which are $i$-absorbing of $(v,u)$.

Relabel indices so that $I=[s]$ where $s:=(1-2^{-16}\dD^4)t$.
It remains to connect the $P_1,\ldots,P_s$ into an absorbing cycle $C$.
Recall that no colour in $\ov{\ms{U}}$ appears on any $P_i$.
We first connect $P_1$ and $P_2$ into a single transversal path $P_1v^1_4xzyv^2_1P_2$.
For this, we first choose distinct colours $c_1,c_2,c_3,c_4 \in \ov{\ms{U}}$
where $c_1c_2 \in E(H)$.
There are at least $|\ov{\ms{U}}| \geq \dD n/2$
choices for $c_1$, and given this at least  $\dD(H[\ov{\ms{U}}]) \geq \dD n/2$ for $c_2$,
and at least $|\ov{\ms{U}}|-3 > \dD n/3$ choices for each of $c_3,c_4$.
Next we choose unused vertices $x,y$ such that $x \in N_{G_{c_3}}(v^1_4)$ is $G_{c_1}$-good
and $y \in N_{G_{c_4}}(v^2_1)$ is $G_{c_2}$-good.
There are at least $(\frac{1}{2}-\mu)n - 4e(M)-2\eps n$ choices for each of these.
Finally, we choose $z \in N_{G_{c_1}}(x) \cap N_{G_{c_2}}(y)$.
We claim that there are at least $\dD n/5$ choices for $z$.

To prove the claim, since $c_1c_2 \in E(H) \subseteq E(C^{\eps,\dD}_{\bm{G}})$, both $G_{c_1}$ and $G_{c_2}$ are $\eps$-extremal,
so $x \in A_{c_1} \cup B_{c_1}$ and $y \in A_{c_2} \cup B_{c_2}$,
and $G_{c_1}$ and $G_{c_2}$ are $\dD$-crossing.
By Observation~\ref{obs:cross}, we have $|X_{c_1} \cap Y_{c_2}| \geq \dD n/4$
whenever $X,Y \in \{A,B\}$.
By the definition of characteristic partition,
there are $Z,W \in \{A,B\}$ such that $d_{G_{c_1}}(x,Z_{c_1}) \geq |Z_{c_1}|-\eps n$
and $d_{G_{c_2}}(y,W_{c_2}) \geq |W_{c_2}|-\eps n$.
Thus $|N_{G_{c_1}}(x) \cap N_{G_{c_2}}(y)| \geq |N_{G_{c_1}}(x,Z_{c_1}) \cap N_{G_{c_2}}(y,W_{c_2})| \geq |Z_{c_1} \cap W_{c_2}| - 2\eps n \geq \dD n/5$.
This completes the proof of the claim.

Thus there are at least $\dD n/5$ choices for each of $c_1,c_2,c_3,c_4,x,y,z$ given any previous choices.
So we can obtain the desired $P_1xyzP_2$.
We can further repeat the process for each pair $(P_j,P_{j+1})$ for $j \in [s]$ where $P_{s+1} := P_1$.
Each time we use four unused colours in $\ov{\ms{U}}$ and three unused vertices, so for each choice of colour at most $4s \leq 4t \leq 8\kK n < \dD n/10$ colours are forbidden and $3s < \dD n/10$ vertices are forbidden.
Thus we obtain a cycle $C$ of length $7s$ which contains every $P_j$ with $j \in [s]$ as a segment.
By construction, for every colour $i \in \ov{\ms{U}}$ and any $G_i$-good vertex $v$,
for every vertex $u \in \ov{T}_i$, there are at least $2^{-16}\dD^4 t \geq 2^{-17}\kK\dD^5n$ disjoint $i$-absorbing paths of $(v,u)$ inside $C$.
Moreover, since at most $2\eps n$ vertices are $G_i$-bad, the number of vertices in $\ov{T}_i$ which are $G_i$-good is at least $|\ov{T}_i| - 2\eps n \geq (1-\sqrt{\eps}/2)n-2\eps n \geq (1-\sqrt{\eps})n$.
Thus $C$ is an absorbing cycle with parameters
$(\dD/3,\sqrt{\eps},14\kK,2^{-17}\kK\dD^5)$
and hence with $(\dD/3,\sqrt{\eps},\lL,\lL^2)$.
\end{proof}

\section{The stable case}\label{sec:stable}

In this section we combine the results of the previous sections
and use the regularity-blow-up method to find a transversal Hamilton cycle when $\bm{G}$ is stable.
First, we show that the reduced graph inherits stability.

\begin{lemma}\label{lm:reducestable}
Suppose that $0<1/n \ll 1/L_0 \ll \eps_0 \ll d\ll \mu,\aA \ll \gG,\eps \ll \dD<1$. 
Let $\bm{G}=(G_1,\ldots,G_n)$ be a graph collection on a common vertex set $V$ of size $n$.
Let $\bm{R}=\bm{R}(\eps_0,1,L_0)$ be the reduced graph collection of $\bm{G}$.
If $\bm{G}$ is $(\gG,\aA,\eps,\dD)$-stable, then $\bm{R}$ is $(\gG/2,\aA^2,\eps,\dD/2)$-stable.
\end{lemma}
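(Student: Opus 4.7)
The plan is to handle each form of stability separately. Apply Lemma~\ref{lm:weakregcol} to obtain the reduced collection $\bm{R}=(R_1,\ldots,R_M)$ with vertex clusters $V_0,V_1,\ldots,V_L$ of common non-exceptional size $m$ and colour clusters $\Cols_0,\Cols_1,\ldots,\Cols_M$ of common non-exceptional size at most $n/M$, and then transfer each form of stability at the cost of slight parameter weakening.

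\emph{Strongly stable case.} The key claim is that if $G_i$ is $\aA$-nice and $i\in\Cols_j$, then $R_j$ is $\aA^2$-nice (in fact $\aA/2$-nice, which is stronger). For any $X,Y\sub[L]$ each of size at least $(\tfrac12-\aA^2)L$, the sets $X_V:=\bigcup_{h\in X}V_h$ and $Y_V:=\bigcup_{h\in Y}V_h$ have size at least $(\tfrac12-\aA)n$, since $\aA^2+\eps_0\leq \aA$. Thus $\aA$-niceness of $G_i$ gives $e_{G_i}(X_V,Y_V)\geq \aA n^2$, and the cleaning bound in Lemma~\ref{lm:weakregcol}(iii) retains $e_{G'_i}(X_V,Y_V)\geq \aA n^2/2$. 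Property~(v) of Lemma~\ref{lm:weakregcol} forces every surviving edge into a pair $(V_h,V_{h'})$ with $hh'\in E(R_j)$; since each such pair contributes at most $m^2$ edges, $|E(R_j)\cap E_{[L]}(X,Y)|\geq \aA n^2/(2m^2)\geq \aA L^2/2\geq \aA^2 L^2$. Finally, pigeonholing: at most $\eps_0 n$ of the $\gG n$ nice colours lie in $\Cols_0$ and each non-exceptional colour cluster holds at most $n/M$ colours, so at least $\gG M/2$ colour clusters $j$ contain an $\aA$-nice colour, yielding that $\bm{R}$ is $(\gG/2,\aA^2)$-strongly stable.

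\emph{Weakly stable case.} We aim to show $e(C^{\eps,\dD/2}_{\bm{R}})\geq (\dD/2)M^2$. Discarding those edges of $C^{\eps,\dD}_{\bm{G}}$ involving $\Cols_0$ (at most $\eps_0 n^2$) or lying inside a single colour cluster (at most $n^2/L_0$) leaves at least $3\dD n^2/4$ edges between distinct non-exceptional clusters. Since each ordered cluster pair $(j,j')$ hosts at most $(n/M)^2$ such edges, at least $3\dD M^2/4$ ordered pairs $(j,j')$ contain an extremal crossing pair $(i,i')\in\Cols_j\times\Cols_{j'}$. For each such $(j,j')$ we must show that $R_j,R_{j'}$ are themselves $\eps$-extremal and $(\dD/2)$-crossing. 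By Lemma~\ref{lm:inherit}, each $R_j$ inherits the near-$L/2$ minimum degree on almost all of $[L]$, so Lemma~\ref{lm:char} applies: $R_j$ is either $\eps^3$-nice or $\eps$-extremal. Starting from the characteristic partition $(A_i,B_i,C_i)$ of $G_i$, project to $[L]$ by assigning each $h\in[L]$ to $A^j,B^j$ or $C^j$ according to where the majority of $V_h$ sits, balancing the sizes to $(\tfrac12-\eps)L$. The resulting partition witnesses $\eps^3$-non-niceness of $R_j$ (few edges between $A^j,B^j$ in the $\ECone$ case, few edges inside one part in the $\ECtwo$ case). The crossing property follows from Observation~\ref{obs:cross}: the pockets $X_i\cap Y_{i'}$ of size at least $\dD n/4$ project to pockets $X^j\cap Y^{j'}$ of size at least $(\dD/2)L/4$ by averaging, giving $(\dD/2)$-crossing for $R_j,R_{j'}$. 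The count then gives $e(C^{\eps,\dD/2}_{\bm{R}})\geq 3\dD M^2/4\geq (\dD/2)M^2$, so $\bm{R}$ is $(\eps,\dD/2)$-weakly stable.

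The main obstacle is the extremality transfer in the weakly stable case: the regularity partition of $V$ need not respect $(A_i,B_i,C_i)$, so the projection must be handled with care. The cleaning bound of Lemma~\ref{lm:weakregcol}(iii) removes all within-cluster edges of $G'_c$ for $c\notin\Cols_0$, which together with the density of $G_i[A_i]$ and $G_i[B_i]$ in the $\ECone$ case (respectively the sparsity of $G_i[A_i]$ in the $\ECtwo$ case) forces most clusters to be largely aligned with one side of $(A_i,B_i)$; combining this alignment with the per-pair density statement in Lemma~\ref{lm:standard} ensures that the projected partition genuinely witnesses sparsity in $R_j$. Once the extremality transfer is secured, the crossing condition and the pigeonhole count are routine.
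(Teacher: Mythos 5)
Your strongly stable argument is correct, and it is actually simpler and in the opposite logical direction to the paper's: you show directly that a single $\aA$-nice $G_i$ with $i\in\Cols_j$ forces $R_j$ to be nice, whereas the paper argues by contradiction via the lifting claim ``$R_j$ sparse on $(A,B)$ $\Rightarrow$ most $G_c$ with $c\in\Cols_j$ sparse on $(\wh A,\wh B)$.'' Both work; yours is cleaner for this case, and the pigeonhole count is fine (modulo the usual technicality of equalising $L$ and $M$).

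The weakly stable case, however, has a genuine gap, and it is the one you half-flag as ``the main obstacle.'' The step ``$G_i$ is $\eps$-extremal and $i\in\Cols_j$ $\Rightarrow$ $R_j$ is $\eps$-extremal (with the projected partition as a witness)'' is false. An edge $hh'$ of $R_j$ records that the \emph{collection} $(G'_c[V_h,V_{h'}]:c\in\Cols_j)$ is regular with average density at least $d$; one sparse colour $G_i$ does not preclude this, since the other colours in $\Cols_j$ can fill in the density (and the colour-regularity condition only excludes $\eps_0|\Cols_j|$ exceptional colours per pair, so $i$ can be exceptional on every pair). Concretely, if $\Cols_j$ consists mostly of nice graphs together with a single extremal $G_i$, then $R_j$ will be nice, not extremal, and no projection of $(A_i,B_i,C_i)$ can witness sparsity in $R_j$. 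Hence your attempt to preserve the \emph{form} of stability (weakly stable $\bm G$ forces weakly stable $\bm R$) cannot succeed in general; it can happen that $\bm G$ is weakly stable while $\bm R$ is strongly stable. The paper avoids this by first disposing of the case that $\bm R$ is $(\gG/2,\aA^2)$-strongly stable, and \emph{only then}, using that most $R_j$ fail to be nice and hence have a characteristic partition, transferring sparsity \emph{downward} from $R_j$ to most $G_c$ with $c\in\Cols_j$ (Claim~\ref{cl:allcolour}), aligning the $G_c$-partitions with the $R_j$-partition, and finally counting crossing cluster pairs.

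The auxiliary claim that the cleaning bound ``forces most clusters to be largely aligned with one side of $(A_i,B_i)$'' is also not correct. For an $(\eps,\ECone)$-extremal $G_i$ (two near-cliques on $A_i,B_i$), a cluster split as $a_h+b_h=m$ has about $\tfrac{a_h^2}{2}+\tfrac{b_h^2}{2}$ within-cluster $G_i$-edges, which is \emph{minimised} when $a_h=b_h=m/2$. So unaligned clusters consume less of the cleaning budget than aligned ones, and the budget argument gives no alignment information at all. The alignment used in the paper is derived from the (essential) uniqueness of the characteristic partition of a single graph, applied to $G_c$ once $G_c$ has been shown sparse across $(\wh{W'_i},\wh{Z'_i})$, not from the regularity partition respecting any individual $(A_i,B_i,C_i)$.
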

\begin{proof}
We may assume that $1/n \ll 1/n_0$ where $n_0=n_0(\eps,1,L_0)$
is the constant from Lemma~\ref{lm:weakregcol} (the regularity lemma for graph collections).
Write $[L]$ for the common vertex set of $\bm{R}$
and $[M]$ for the set of colour clusters where $L_0 \leq L,M \leq n_0$.
Thus there is a partition $V_0,\ldots,V_L$ of $V$ and $\Cols_0,\ldots,\Cols_M$ of $[n]$
and a graph collection $\bm{G}'$ satisfying (i)--(v) of Lemma~\ref{lm:weakregcol}
and for $(\{h,i\},j) \in \binom{[L]}{2} \times [M]$, we have $hi \in R_j$ 
whenever $\bm{G}'_{hi,j}$ is $(\eps_0,d)$-regular,
where $\bm{G}'_{hi,j} := (G'_c[V_h,V_i]: c \in \Cols_j)$.
We have $Lm \leq n \leq Mm+\eps_0 n$ and $Mm \leq n \leq Lm+\eps_0 n$
so $|L-M| \leq \eps_0 n/m \leq \eps_0 L/(1-\eps_0)$.
Thus we may assume that $M=L$ at the expense of assuming the slightly worse bound $|V_0|+|\Cols_0|\leq 3\eps_0 n$.
Given $X \subseteq V(R)=[L]$, we write $\wh{X} := \bigcup_{j \in X}V_j$.
So $|\wh{X}|=m|X|$.

\begin{claim}\label{cl:allcolour}
Let $\aA' \gg d$.
Suppose that $i \in [L]$ and there are sets $A,B \subseteq V(R)=[L]$ of size at least $(\frac{1}{2}-\aA')L$ such that $e_{R_i}(A,B)\leq \aA' L^2$. 
Then, for all but at most $2 \sqrt{\aA'} m$ colours $c \in \Cols_i$, we have $e_{G_c}(\wh{A},\wh{B})\leq \sqrt{\aA'} n^2$.
\end{claim}
\begin{proof}[Proof of Claim]
Let $t$ be the number of colours $c \in \Cols_i$ such that $e_{G_c}(\wh{A},\wh{B})> \sqrt{\wt{\aA}} n^2$.
Since $e(G_c)-e(G_c') \leq (3d+\eps_0)n^2$ for all $c \in [n]$, we have
\begin{align*}
\sqrt{\aA'} n^2 t &\leq \sum_{c \in \Cols_i}e_{G_c}(\wh{A},\wh{B}) 
\leq \sum_{c \in \Cols_i}(e_{G_c'}(\wh{A},\wh{B}) + (3d+\eps_0)n^2)
\leq |\Cols_i|e_{R_i}(A,B)m^2 + |\Cols_i|(3d+\eps_0)n^2\\
&\leq \aA' L^2m^3+4dmn^2 \leq 2\aA' m n^2.
\end{align*}
This implies $t\leq 2\sqrt{\aA'} m$.
\end{proof}

\medskip
\noindent
\emph{Case 1}: $\bm{G}$ is $(\gG,\aA)$-strongly stable. 
We claim that $\bm{R}$ is $(\gG/2, \aA^2)$-strongly stable.
Suppose for a contradiction that there exists a subset $I \subseteq [L]$ with $|I|\geq (1-\gG/2)L$ such that $R_i$ is not $\aA^2$-nice for any $i\in I$. 
The claim applied with $\aA' = \aA^2$ implies that there are at least $(1-2\aA)m$ colours $c\in \Cols_i$ such that $G_i$ is 
not $\aA$-nice,
since $|\wh{A}|,|\wh{B}| \geq (\frac{1}{2}-\aA^2)Lm > (\frac{1}{2}-\aA)n$.
Thus the number of colours $c \in [n]$ for which $G_c$ is not $\aA$-nice is at least
$(1-2\aA)m(1-\gG/2)L > (1-\gG)n$ since $\aA \ll \gG$.
This contradicts the $(\gG,\aA)$-strong stability of $\bm{G}$.

\medskip
\noindent
\emph{Case 2}: $\bm{G}$ is $(\eps,\dD)$-weakly stable.
Suppose that $\bm{R}$ is not $(\gG/2, \aA^2)$-strongly stable.
It suffices to show that $\bm{R}$ is $(\eps, \dD/2)$-weakly stable. 
There is $I \subseteq [L]$ with $|I| \geq (1-\gG/2)L$ such that for all $i \in I$, 
$R_i$ is not $\aA^2$-nice.
Thus $R_i$ is $\aA^{2/3}$-extremal, and hence $\eps$-extremal.
Lemma~\ref{lm:inherit} implies that $d_{R_i}(j) \geq (\frac{1}{2}-2\mu)L$
for all but at most $d^{1/4}L$ vertices $j \in [L]$.
Lemma~\ref{lm:char} now implies that
$R_i$ has a characteristic partition $(A_i',B_i',C_i')$.
Furthermore, there are $Z,W \in \{A,B\}$ such that
$e_{R_i}(Z_i',W_i') \leq \aA^{2/3}L^2$ (and 
$Z \neq W$
if $R_i$ is $(\aA^{2/3},\ECone)$-extremal,
while $Z=W$ if $R_i$ is $(\aA^{2/3},\ECtwo)$-extremal). 
We have $|\wh{W'_i}| = |\wh{Z'_i}|=|Z_i'|m=(\frac{1}{2}-\aA^{2/3})Lm \geq (\frac{1}{2}-2\aA^{2/3})n$.
By Claim \ref{cl:allcolour} applied with $(W_i',Z_i')$ and $\aA' := \aA^{1/3}$, there is $\ms{B}_i \subseteq \Cols_i$ 
with $|\Cols_i \sm \ms{B}_i| \leq 2\aA^{1/3}m$ such that for all $c \in \ms{B}_i$, we have $e_{G_c}(\wh{W'_i},\wh{Z_i'})\leq \aA^{1/3} n^2$. It follows that every such $G_c$ is $2\aA^{1/3}$-extremal.
Thus, recalling that $(A_c,B_c,C_c)$ is the characteristic partition of $G_c$,
there are $Z,Y,W \in \{A,B\}$ with $\{Z,Y\}=\{A,B\}$ such that $e_{G_c}(Z_c,W_c) \leq \aA^{1/4}n^2$ and $e_{G_c}(Y_c,W_c) \geq (\frac{1}{4}-\aA^{1/4})n^2$. 
But then we must have $|\wh{Z'_i} \sd Z_c|,|\wh{Y'_i} \sd Y_c|< \eps n$ 
or $|\wh{Z'_i} \sd Y_c|,|\wh{Y'_i} \sd Z_c|< \eps n$.
So either $|\wh{A'_i} \sd A_c|,|\wh{B'_i} \sd B_c| < \eps n$ or $|\wh{A'_i} \sd B_c|,|\wh{B'_i} \sd A_c| < \eps n$.
That is, for all $c\in \ms{B}_i$, the characteristic partition of $G_c$ is almost the same as the union of clusters corresponding to the characteristic partition of $R_i$.

Suppose now that $c,c' \in [n]$ are such that $G_c$ and $G_{c'}$ are $\eps$-extremal and $\dD$-crossing
and $c \in \ms{B}_i$ and $c' \in \ms{B}_j$ for some $i,j \in [L]$.
We must have $i \neq j$. Then, if $|\wh{A'_i} \sd A_c|,|\wh{A'_j} \sd A_{c'}| < \eps n$, we have 
$|A_i' \sd A_j'|m = |\wh{A'_i} \sd \wh{A'_j}| \geq |A_c \sd A_{c'}| - |\wh{A'_i} \sd A_c| - |\wh{A'_j} \sd A_{c'}| \geq \dD n - 2\eps n \geq \dD n/2 \geq \dD mL/2$.
The other cases are almost identical.
Thus $R_i$ and $R_j$ are $\dD/2$-crossing.
Since $\bm{G}$ is $(\eps,\dD)$-weakly stable, we have $e(C^{\eps,\dD}_{\bm{G}}) \geq \dD n^2$.
The number of pairs $i,j \in [L]$ with $c,c'$ as above is therefore at least 
$\dD n^2/m^2 \geq \dD L^2/2$.
Thus $e(C^{\eps,\dD/2}_{\bm{R}}) \geq \dD L^2/2$,
and hence $\bm{R}$ is $(\eps,\dD/2)$-weakly stable.
\end{proof}

\begin{lemma}\label{lm:rhcstable}
Let $0<1/n\ll \mu \ll \aA \ll \gG,\eps\ll \dD \ll 1$. Suppose that $\bm{G}=(G_1,\ldots,G_n)$ is a graph collection on a common vertex set $V$ of size $n$ and $\dD(\bm{G})\geq (\frac{1}{2}-\mu)n$. If $\bm{G}$ is $(\gG, \aA,\eps, \dD)$-stable, then $\bm{G}$ contains a transversal Hamilton cycle.
\end{lemma}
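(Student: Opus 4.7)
The plan is to follow the four-step outline from Section~\ref{sec:sketch}: build an absorbing cycle, apply the regularity-blow-up method on what remains to obtain long transversal paths covering almost everything, and finally stitch everything together using the absorbing property.

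First I would build an absorbing cycle $C$ for $\bm{G}$. If $\bm{G}$ is $(\gG,\aA)$-strongly stable, apply Lemma~\ref{lm:absorbcycle} to obtain an absorbing cycle with parameters $(1,0,\lL,\lL^2)$ for a suitably small $\lL$; otherwise $\bm{G}$ is $(\eps,\dD)$-weakly stable and Lemma~\ref{lm:absorbcycle2} provides one with parameters $(\dD/3,\sqrt{\eps},\lL,\lL^2)$. In either case, $|V(C)|\leq \lL n$ and $|\col(C)|\leq \lL n$. Remove $V(C)$ and $\col(C)$ from $\bm{G}$ to obtain a graph collection $\bm{G}^\ast=(G^\ast_c:c\in\Cols^\ast)$ on $V^\ast=V\sm V(C)$ with $|V^\ast|=|\Cols^\ast|+|\col(C)\cap[|\Cols^\ast|]|$ and $\dD(\bm{G}^\ast)\geq(\frac{1}{2}-2\lL)|V^\ast|$.

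Second, I would apply the regularity lemma for graph collections (Lemma~\ref{lm:weakregcol}) with parameters $\eps_0\ll d\ll \mu$ and a sufficiently large $L_0$ to $\bm{G}^\ast$, producing a reduced graph collection $\bm{R}$ on vertex set $[L]$ with colour set $[M]$. By Lemma~\ref{lm:reducestable}, $\bm{R}$ inherits $(\gG/2,\aA^2,\eps,\dD/2)$-stability from $\bm{G}^\ast$, and by Lemma~\ref{lm:inherit} the minimum degree of (almost all rows of) $\bm{R}$ is at least $(\frac{1}{2}-2\mu)L$. Apply Lemma~\ref{lm:2matching} to $\bm{R}$ to obtain two edge-disjoint almost-perfect transversal matchings $M_1,M_2$ with $\col(M_1)\cap\col(M_2)=\es$; the missing vertices and colours are a small $\theta L$ set which can be absorbed into the exceptional set $V_0,\Cols_0$. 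After discarding a tiny proportion of vertices in each cluster, every edge $hi\in M_1\cup M_2$ corresponds to an $(\eps_0,d/2)$-superregular pair $(V_h,V_i)$ with the corresponding colour cluster (using Lemma~\ref{lm:slice}).

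Third, the union $M_1\cup M_2$ has maximum degree at most two in $[L]$ and covers almost all clusters; by orienting these alternating $M_1/M_2$-edges I obtain a collection of ``alternating walks'' in the reduced graph. Inside each superregular pair I apply the transversal blow-up lemma (Theorem~\ref{th:blowup}) to $H$ a Hamilton path of the pair, with endpoint target sets chosen so that the path may later be concatenated with its neighbouring paths at the two endpoints. This yields a family of pairwise disjoint transversal paths whose union is an almost-spanning collection of long transversal paths in $\bm{G}^\ast$ (and whose colour set is $\col(M_1\cup M_2)$ lifted to the original graphs).

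Finally, I would concatenate all these long paths into a single structure and glue it onto $C$ using the absorbing property. A small set $U\sub V\sm V(C)$ of vertices remains uncovered (the exceptional vertices from the regularity lemma, the tiny deficit of the almost-perfect matchings, and the vertices chopped off for superregularity), together with a correspondingly small set $\ms{D}\sub[n]\sm(\col(C)\cup\col(M_1)\cup\col(M_2))$ of unused colours. Pair the path endpoints into consecutive pairs $(u_r,v_r)$, and for each pair choose a distinct absorbing colour $c_r\in\col(C)$ together with (when needed) a leftover vertex and colour to splice in; Definition~\ref{def:absorb}(i) gives many disjoint $c_r$-absorbing paths of $(v_r,u_r)$ inside $C$, which allow us to slot path $P_r$ in between them while using exactly one new colour $c_r$. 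Similarly Definition~\ref{def:absorb}(ii) lets us insert each remaining vertex of $U$ together with a remaining colour of $\ms{D}$. Because $|V(C)|,|\col(C)|\sim \lL n$ while $|U|,|\ms{D}|\leq \eps_0 n\ll \lL^2 n$, the absorbing reservoir is large enough for every insertion to find a private, still-unused absorbing path.

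The main obstacle is step four: coordinating the splicing so that every concatenation point and every leftover vertex/colour is handled by a distinct, disjoint absorbing path while respecting the two constraints in Definition~\ref{def:absorb} on the number of forbidden target vertices ($\dD' n$) and on the $G_c$-good hypothesis for the inserted vertex. One must argue that, after planning the order in which pieces are inserted, the endpoints of every long path and every leftover vertex can be chosen to be $G_c$-good and outside the small excluded set $T_c$ of colour $c$; the flexibility in the target sets in Theorem~\ref{th:blowup} is exactly what is needed, but matching the counts $\sqrt{\eps}n$ (in the weakly-stable case) against the number of concatenations $\lL^2 n$ demands the hierarchy $\lL,\mu\ll \aA \ll \gG,\eps\ll\dD$ stated in the lemma. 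Once this bookkeeping is verified, the resulting structure is a transversal Hamilton cycle.
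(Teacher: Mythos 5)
Your outline follows the paper's high-level strategy (absorbing cycle, regularity, two almost-perfect matchings, blow-up lemma for long paths, absorb to connect), but there is a genuine gap in the final step, which you correctly flag as "the main obstacle" but then do not resolve.

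The issue is the circular dependency in directly invoking Definition~\ref{def:absorb} on the endpoints of the long paths. The absorbing property (i) requires the first endpoint $v$ to be $G_c$-good and the second endpoint $u$ to lie outside a forbidden set of size $\dD'n$ \emph{that depends on $(c,v)$}. You propose to use the blow-up lemma's target sets to land the path endpoints where needed, but to pick a target set for $y_i$ you would already need to know both the absorbing colour $c$ and the other endpoint $x_i$, which is not yet determined. Also, when absorbing a path one only adds a single new colour $c$; you still have one leftover colour per path in $\Cols_{\rm rem}$ (there are exactly $s$ such colours, where $s$ is the number of paths), and your plan does not explain how these colours are fed into the cycle together with the path insertions. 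Additionally, you take the absorbing colours from $\col(C)$; they must come from a set $\Cols$ that is \emph{disjoint} from $\col(C)$ (these are the colours not yet on the cycle that can be introduced during an absorption step).

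The paper resolves all of this with an intermediate device you are missing: a pre-selected family $\mc{Q}=\{(c_i,c_i',v_i,v_i')\}$ of $\Theta(\bB n)$ \emph{totally absorbable} quadruples (Claim~5.3) with the universal hitting property that for \emph{every} pair $(u_1,u_2)\in V^2$ and every colour $c$, many quadruples satisfy $c\in L(u_1v_i)$ and $c_i'\in L(u_2v_i')$. Instead of absorbing $P_i$ directly via its endpoints $x_i,y_i$, one absorbs the extended path $v_{j_i}x_iP_iy_iv_{j_i}'$ whose endpoints $v_{j_i},v_{j_i}'$ were pre-chosen to satisfy the goodness and non-forbidden constraints; the connecting edge $v_{j_i}x_i$ carries the leftover colour $a_i$, and $y_iv_{j_i}'$ carries $c_{j_i}'$. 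Unused quadruples are folded back in using property (ii). This quadruple machinery is what decouples the circular constraints and accounts for the leftover colours, and without it (or an equivalent construction) the concatenation step does not close.
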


\begin{proof}
Choose additional parameters $n_0,L_0,\eps_0,d,\theta,\bB,\lL$ 
where $n_0=n_0(\eps,1,L_0)$ is obtained from Lemma~\ref{lm:weakregcol} and
so that
$$
0<1/n \ll 1/n_0 \ll 1/L_0 \ll \eps_0 \ll d\ll \mu \ll \theta\ll \bB \ll \lL \ll 
\aA \ll \gG,\eps\ll \dD \ll 1,
$$
where the previous lemmas in this section hold with suitable parameters.
For any colour $c$ and two vertices $(x,y) \in V^2$, we say the triple $(c,x,y)$ is \emph{absorbable} if 
there are at least $\lL^2 n$ disjoint $c$-absorbing paths of $(x,y)$ inside $C$. Similarly, we say the pair $(c,x)$ is \emph{absorbable} if 
there are at least $\lL^2 n$ disjoint $c$-absorbing paths of $(x,x)$ inside $C$. 
Let $c,c' \in [n]$ be two colours and $x,y \in V$ be two vertices.
We say $(c,c',x,y)$ is \emph{totally absorbable} if $(c,x)$, $(c',y)$ and $(c,x,y)$ are all absorbable.
By Lemma \ref{lm:absorbcycle} and Lemma \ref{lm:absorbcycle2}, $\bm{G}$ has an absorbing cycle $C$ with parameters $(\dD/3,\sqrt{\eps},\lL, \lL^2)$. 
By definition, $C$ has length at most $\lL n$, and there exists a colour set $\Cols \subseteq [n]$ with $\Cols \cap \col(C) = \es$ of size at least $\dD n/3$ such that
\begin{itemize}
\item[(i)] given any colour $c\in \Cols$ and any $G_c$-good vertex $v$, 
the triple $(c,v,u)$ is absorbable for all but at most $\sqrt{\eps}n$ vertices $u$.
\item[(ii)] given any colour $c\in \Cols$, for all but at most $\sqrt{\eps} n$ $G_c$-good vertices $v$, 
the pair $(c,v)$ is absorbable.
\end{itemize}

\begin{claim}\label{cl:stableabs}
There is $(2-2^{-10})\bB n \leq r \leq 2\bB n$ and for each $i \in [r]$, disjoint vertex pairs $(v_i,v_i') \in (V)_2$ 
and disjoint colour pairs $(c_i,c_i') \in (\Cols)_2$ 
such that the family $\mc{Q} := \{(c_i,c_i',v_i,v_i'): i \in [r]\}$
has the following properties:
\begin{itemize}
\item[(a)] $(c_i,c_i',v_i,v_i')$ is totally absorbable for all $i \in [r]$;
\item[(b)] for every pair $(u_1,u_2) \in V^2$ and $c \in [n]$, there are at least $2^{-9}\bB n$ 
values $i \in [r]$ such that 
$c \in L(u_1v_i)$ and $c_i' \in L(u_2v_i')$.
\end{itemize}
\end{claim}

\begin{proof}[Proof of Claim]
For every $(b_1,b_2) \in (\Cols)_2$, 
every $(u_1,u_2)\in V^2$ and $c \in \Cols$, 
Let $S(b_1,b_2,u_1,u_2,c)$ be the multiset of pairs $(v_1,v_2) \in (V)_2$ such that $c \in L(u_1v_1)$, $b_2 \in L(u_2v_2)$ and $(b_1,b_2,v_1,v_2)$ is totally absorbable.
We will show that $|S(b_1,b_2,u_1,u_2,c)| \geq 2^{-4}n^2$.

For this, we first count the number of choices for $v_1$. Note that we have $d_{G_c}(u_1)\geq (\frac{1}{2}-\mu)n$ and the number of vertices $v_1$ such that $v_1$ is not $G_{b_1}$-good or $(b_1,v_1)$ is not absorbable is at most $2\eps n+\sqrt{\eps}n$. Thus we have at least $n/4$ choices of $v_1$ such that $v_1\in N_{G_c}(u_1)$, $v_1$ is $G_{b_1}$-good and $(b_1,v_1)$ is absorbable. 
Now we fix $v_1$. 
Let $N_2 := \{x \in N_{G_{b_2}}(u_2): x \text{ is }G_{b_2}\text{-good}\}$. 
So $|N_2|\geq (\frac{1}{2}-\mu)n-2\eps n$. By (i), for all but at most $\sqrt{\eps} n$ vertices $v_2\in V$, the pair $(v_1,v_2)$ has at least $\lL^2 n$ $b_1$-absorbing paths inside $C$. Thus, we can delete at most $\sqrt{\eps} n+1$ vertices from $N_2$ and obtain a set $N_2'$ such that $|N_2'|\geq n/4$, and for each $v_2\in N_2'$, $v_2 \neq v_1$ and $(b_1,b_2,v_1,v_2)$ is totally absorbable. Therefore, the total number of choices for $(v_1,v_2)$ is at least $2^{-4}n^2$.

Let $\{c_i,c_i' : i \in [2\bB n]\} \subseteq \Cols$ be a collection of distinct colours.
Set $\bm{H}:=(\mc{F}_i := \{(v_1,v_2) \in (V)_2: (c_i,c_i',v_1,v_2)\text{ is totally absorbable}\} : i \in[2\bB n])$ and define the collection $\bm{Z}:=\{S(u_1,u_2,c) := \bigcup_{i \in [2\bB n]} S(c_i,c_i',u_1,u_2,c) : (u_1,u_2)\in V^2, c\in [n]\}$ of multi-$2$-graphs.
For every $S = S(u_1,u_2,c) \in \bm{Z}$ and every index $i \in [2\bB n]$, we have
$|E(S) \cap E(\mc{F}_i)| \geq 2^{-4}n^2$.
Lemma~\ref{lm:key} applied with $2\bB n,2^{-4}$ playing the roles of $t,\eps$,
implies that there is a transversal matching $M$ in $\bm{H}$ of size at least $(2- 2^{-10})\bB n$ 
(and at most $2\bB n$) 
and $|E(S) \cap E(M)| \geq 2^{-9}\bB n$ for all $S \in \bm{Z}$.
\end{proof}

For the remainder of the proof, we will cover most of the unused vertices and colours by a small number of long paths. This step will use the regularity-blow-up method for transversals.
Finally, we will use the absorbing cycle $C$ and family $\mc{Q}$ to absorb the remaining colours and vertices and connect the long paths.

Let
\begin{gather*}
V_{\abs} := \bigcup_{i \in [r]}\{v_i,v_i'\},\quad
\Cols_{\abs} := \bigcup_{i \in [r]}\{c_i,c_i'\},
\quad
\Cols_{\rm rem}:=[n]\sm (\col(C)\cup \Cols_{\abs}),\\
\quad
U:=V\sm (V(C)\cup V_{\abs})
\quad\text{and}\quad
\bm{J}=(J_i : i \in \Cols_{\rm rem})
\text{ where }J_i:=G_i[U].
\end{gather*} 
So $|[n] \sm \Cols_{\rm rem}|=|V \sm U| \leq \lL n+2\bB n \leq 2\lL n$.

Since $\lL \ll \gG,\aA,\eps,\dD$, it is easy to see that since $\bm{G}$ is $(\gG, \aA, \eps, \dD)$-stable, $\bm{J}$ is $(\gG/2, \aA/2, 2\eps,\dD/2)$-stable, and $\dD(J_i)\geq (\frac{1}{2}-\mu-2\lL)n \geq (\frac{1}{2}-3\lL)n$ for $i\in \Cols_{\rm rem}$. 
Apply Lemma~\ref{lm:weakregcol}
(the regularity lemma for graph collections) to $\bm{J}$ with parameters $(\eps_0,1,d,L_0)$.
Let $\bm{R}$ be the reduced graph of $\bm{J}$.
Write $[L]$ for the common vertex set of $\bm{R}$ where $L_0 \leq L \leq n_0$,
and $[M]$ for the set of colour clusters.
Thus there is a partition $V_0,\ldots,V_L$ of $V$ and $\Cols_0,\ldots,\Cols_M$ of $[n]$
and a graph collection $\bm{J}'$ satisfying (i)--(v) of Lemma~\ref{lm:weakregcol}.
Therefore, for $(\{h,i\},j) \in \binom{[L]}{2} \times [M]$, we have $hi \in R_j$ 
whenever $\bm{J}'_{hi,j}$ is $(\eps_0,d)$-regular,
where $\bm{J}'_{hi,j} := (J'_c[V_h,V_i]: c \in \Cols_j)$.
We have $Lm \leq n \leq Mm+\eps_0 n$ and $Mm \leq n \leq Lm+\eps_0 n$
so $|L-M| \leq \eps_0 n/m \leq \eps_0 L/(1-\eps_0)$.
Thus we may assume that $M=L$ at the expense of assuming the slightly worse bound $|V_0|+|\Cols_0|\leq 3\eps_0 n$. 
Lemma~\ref{lm:inherit} implies that for each vertex $i \in [L]$, there are at least $(1-d^{1/4})L$ colours $j \in [L]$ such that $d_{R_j}(i)\geq (\frac{1}{2}-4\lL)L$. By Lemma \ref{lm:reducestable}, $\bm{R}$ is $(\gG/4,\aA^2/4,2\eps,\dD/4)$-stable. 
Lemma~\ref{lm:2matching} implies that
there exist two edge-disjoint transversal matchings $M_1$ and $M_2$ in $\bm{R}$ such that $e(M_\ell)\geq (\frac{1}{2}-\theta)L$ for $\ell=1,2$ and $\col(M_1)\cap \col(M_2)=\es$.

Now for each vertex $i \in [L]$, let $V_i = V_i^1 \cup V_i^2$ be an arbitrary equipartition.
Lemma~\ref{lm:slice}(i) implies that whenever $hi \in E(R_j)$ where $j \in [L]$,
we have that $\bm{J}^\ell_{hi,j} = (J_c'[V_h^\ell,V_i^\ell]:c \in \Cols_j)$ is $(2\eps_0,d/2)$-regular for both $\ell=1,2$.
By Lemma~\ref{lm:standard}, for each $\ell =1,2$ and $hi \in E(M_\ell)$, we can remove at most $2\eps_0 m$ vertices in $V_h^\ell$
and at most $2\eps_0 m$ colours $c \in \Cols_j$, where $j$ is the colour of $hi$ in $M_\ell$,
so that the remaining graph collection is $(4\eps_0,d/4)$-superregular.

Apply Theorem~\ref{th:blowup} (the transversal blow-up lemma) to obtain a transversal path $P_{hi,j}$ with $2\min\{|V_h^\ell|,|V_i^\ell|\}$ vertices using colours from $\Cols_j$.
(Note that we could have avoided using the blow-up lemma and instead used a tool for embedding an almost spanning structure inside a regular pair (as opposed to superregular), but it was convenient to follow the above approach.)
The collection of $P_{hi,j}$ over $hi \in E(M_1) \cup E(M_2)$ (where $hi$ has colour $j$ in its matching)
is vertex-disjoint, since the subcluster $V^\ell_h$ is used $d_{M_\ell}(h) \leq 1$ times,
 and transversal since $M_1 \cup M_2$ is transversal.
The number of vertices not in any $P_{hi,j}$ is at most $4\eps_0 mL + 3\eps_0 n + 2(L-e(M_1)-e(M_2))m \leq 3\theta n$.
Thus the number of colours not in any $P_{hi,j}$ is at most $3\theta n+e(M_1)+e(M_2) \leq 4\theta n$.
We consider each vertex in $U$ but not in any $P_{hi,j}$ to be a path (of length $0$).

Now relabel so that the paths are $P_1,\ldots,P_s$, so $s\leq e(M_1)+e(M_2)+3\theta n \leq 4\theta n$,
and let $x_i,y_i$ be the startvertex and endvertex of $P_i$ for each $i \in [s]$
(so $x_i=y_i$ if $P_i$ has length $0$).
The paths $P_1,\ldots,P_s$ are transversal and pairwise vertex-disjoint, and cover $U$.
Thus the number of colours in $\Cols_{\rm rem}$ which are not used on any $P_i$ is precisely $s$.

Do the following for each $i=1,\ldots,s$ in turn.
Let $a_i$ be an arbitrary unused colour, in $\Cols_{\rm rem}$.
Choose an unused $4$-tuple $Q_i=(c_{j_i},c_{j_i}',v_{j_i},v_{j_i}') \in \mc{Q}$ where $j_i \in [r]$,
$a_i \in L(x_iv_{j_i})$ and $c_{j_i}' \in L(y_iv_{j_i}')$. 
This is possible since Claim~\ref{cl:stableabs} implies there are at least $2^{-9}\bB n$ choices for $Q_i$,
of which at most $s \leq 4\theta n$ have been used.
Now, $(c_{j_i},c_{j_i}',v_{j_i},v_{j_i}')$ is totally absorbable, so 
there are at least $\lL^2 n$ disjoint $c_{j_i}$-absorbing paths of $(v_{j_i},v_{j_i}')$ inside $C$.
So we can choose one of them, $S_i:= x^i_1x^i_2x^i_3x^i_4$, whose vertices have not been previously chosen since $s/n \ll \lL$, and whose colours are, in order, $b^i_1,b^i_2,b^i_3$.

At the end of this process, there remains $I \subseteq [r]$ such that the $(c_j,c_j',v_j,v_j') \in \mc{Q}$ with $j \in I$ are precisely the $4$-tuples which were not chosen to be some $Q_i$. For each one, 
there are at least $\lL^2 n$ disjoint $c_j$-absorbing paths of $(v_j,v_j)$ 
and disjoint $c_j'$-absorbing paths of $(v_j',v_j')$ inside $C$.
So, since $\bB \ll \lL$, we can choose one such path $T_i,T_i'$ for each of $(v_j,c_j)$ and $(v_j',c_j')$, which are vertex-disjoint and whose vertices have not previously been chosen.

At the end of this process, we have a collection $\{S_i:i \in [s]\},\{T_i:i \in I\}, \{T_i':i \in I\}$ of vertex-disjoint paths in $C$
where, for each $i \in [s]$, $S_i$ is a $c_{j_i}$-absorbing path of $(v_{j_i},v_{j_i}')$;
for each $i \in I$, $T_i$ is a $c_i$-absorbing path of $(v_i,v_i)$ and $T_i'$ is a $c_i'$-absorbing path of $(v_i',v_i')$.
For each $i \in [s]$, we replace $S_i$ by $x^i_1x^i_2v_{j_i}x_iP_iy_iv_{j_i}'x^i_3x^i_4$
with colours $b^i_1,c_{j_i},a_i$, followed by the colours inherited from $P_i$, 
followed by $c_{j_i}',b^i_2,b^i_3$.
That is, we have replaced $S_i$ by a path with the same endpoints, vertices $V(S_i) \cup V(P_i) \cup \{v_{j_i},v_{j_i}'\}$ and colours $\col(S_i) \cup \col(P_i) \cup \{c_{j_i},c_{j_i}',a_i\}$.
For each $i \in I$, we replace $T_i=y^i_1y^i_2y^i_3y^i_4$ by $y^i_1y^i_2v_iy^i_3y^i_4$
where colours are inherited except $\col(y^i_2v_i)=c_i$ and $\col(v_iy^i_3)=\col(y^i_2y^i_3)$.
We do a similar replacement of $T_i'$, using new vertex $v_i'$ and new colour $c_i'$.
So we have replaced $T_i$ by a path with the same endpoints, vertices $V(T_i) \cup \{v_i\}$
and colours $\col(T_i) \cup \{c_i\}$,
and $T_i'$ by a path with the same endpoints, vertices $V(T_i') \cup \{v_i'\}$
and colours $\col(T_i') \cup \{c_i'\}$.
Thus we have obtained a cycle using vertices $V(C) \cup \bigcup_{i \in [s]}V(P_i) \cup \{v_i,v_i': i \in [r]\}=[n]$ and colours $\col(C) \cup \bigcup_{i \in [s]}\col(P_i) \cup \{c_i,c_i':i \in [r]\} = [n]$
where each colour is used at most once (and hence exactly once).
That is, we have constructed a transversal Hamilton cycle.
\end{proof}

\section{The extremal case}\label{sec:ext}

This section concerns the remaining case when $\bm{G}$ is not stable;
so most graphs in the collection are close to containing one of the two extremal graphs $\ECone$ or $\ECtwo$,
and moreover their characteristic partitions are similar.
Throughout, we assume the following hypothesis:
 \medskip
\begin{enumerate}[label=$(\dagger)$,ref=$(\dagger)$]
\item\label{it:dagger}
Suppose that
$$
0< 1/n \ll \eps \ll \dD \ll \eta \ll 1.
$$
Let $m=(1-3\dD)n$.
Let $\bm{G}=(G_1, \ldots, G_{n})$ be a graph collection on a common vertex set $V$ of size $n$ and $\dD(\bm{G})\geq (\frac{1}{2}-\mu)n$. Suppose that for each $i\in [m]$, $G_i$ is $\eps$-extremal with characteristic partition $(A_i,B_i,C_i)$
and $|A_1 \sd A_i|,|B_1 \sd B_i| \leq \dD n$.
\end{enumerate}

\medskip
\noindent
We also use the following notation:
\begin{align*}
\Cols(H)&:=\{i\in [m]: G_i\text{ is }(\eps,H)\text{-extremal}\}\quad
\text{and}\quad
\bm{G}(H) :=(G_i: i \in \Cols(H))
\quad
\text{for }H \in \{\ECone,\ECtwo\},\\
V_{\rm bad} &:= C_1 \cup V^A_{\rm bad} \cup V^B_{\rm bad}
\quad\text{where}\quad V^Z_{\rm bad} := \{x \in Z_1: x \notin Z_i\text{ for at least }\sqrt{\dD}n\text{ colours }i \in [m]\},\\
\Cols_{\rm bad} &:= [m+1,n].
\end{align*}
Now,
$\sqrt{\dD}n|V^Z_{\rm bad}|
\leq \sum_{i \in [m]}|Z_1 \sm Z_i| \leq m\dD n$
so
$$
|V_{\rm bad}| \leq 2\eps n + 2\sqrt{\dD}m \leq 3\sqrt{\dD}n.
$$
It is a consequence of Lemma~\ref{lm:char} that $\Cols(\ECone) \cap \Cols(\ECtwo)=\es$.
The first result in this section is an application of the transversal blow-up lemma for embedding transversal Hamilton paths inside very dense bipartite graph collections.
We note that this application is more for convenience than necessity.

\begin{lemma}\label{lm:almostcp1}
Suppose that~\ref{it:dagger} holds.
Let $W,Z \in \{A,B\}$.
Let $W^* \subseteq W_1 \sm V_{\rm bad}$
and $Z^* \subseteq Z_1 \sm V_{\rm bad}$
where $|W^*|,|Z^*| \geq \eta n$
and $W^* \cap Z^* =\emptyset$
and $|W^*|-|Z^*| =: t \in \{0,1\}$. 
Let $T:=Z$ if $t=0$ and $T:=W$ if $t=1$.
Let $\Cols \subseteq [n]$ satisfy $|\Cols|=|W^*|+|Z^*|-1$,
where $\Cols \subseteq \Cols(\ECone)$ if $W=Z$ and $\Cols \subseteq \Cols(\ECtwo)$ if $W \neq Z$.
Let $W^- \subseteq W^*$ and $T^+ \subseteq T^*$
with $|W^-|,|T^+| \geq \eta n$.
Then there is a transversal Hamilton path in $\{G_i[W^*,Z^*]: i \in \Cols\}$ starting in $W^-$ and ending in $T^+$.
\end{lemma}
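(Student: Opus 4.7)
The plan is to apply the transversal blow-up lemma (Theorem~\ref{th:blowup}) to the bipartite graph collection $\bm{G}^* := (G_i[W^*, Z^*] : i \in \Cols)$, with template $H$ a Hamilton path on the $|W^*|+|Z^*|$ vertices of $W^*\cup Z^*$. Note that $H$ has $|W^*|+|Z^*|-1 = |\Cols|$ edges, maximum degree $\DD=2$, and is $\mu$-separable for any $\mu>0$. Because $||W^*|-|Z^*||\leq 1$, the bipartition of $H$ can be chosen to match $(W^*,Z^*)$ with both endpoints in $W^*$ when $t=1$ (so $T=W$) and with one endpoint in each part when $t=0$ (so $T=Z$). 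The target sets will be $T_{x_1}:=W^-$ and $T_{x_2}:=T^+$ for the two endpoints, both of size at least $\eta n\geq \eta|W^*|$, so we will apply the lemma with $\nu=\eta/2$.

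The key work is to verify that $\bm{G}^*$ is $(\eps',d)$-superregular for, say, $\eps'=\dD^{1/4}$ and $d=1/2$. By the hypothesis, exactly one of the cases $W=Z$ (so $\Cols\subseteq\Cols(\ECone)$) or $W\neq Z$ (so $\Cols\subseteq\Cols(\ECtwo)$) occurs. In the first case, for $i\in\Cols$, every $a\in A_i$ has at most $2\eps n$ non-neighbours in $A_i$; combined with $W^*,Z^*\subseteq A_1\sm V_{\rm bad}$ and $|A_1\sd A_i|\leq \dD n$, this gives, for each $v\in W^*$ with $v\in A_i$, the bound
\[
d_{G_i}(v, Z^*)\geq |Z^*\cap A_i|-2\eps n\geq |Z^*|-\dD n-2\eps n\geq (1-3\sqrt{\dD})|Z^*|.
\]
Since $v\notin V_{\rm bad}$, the condition $v\in A_i$ fails for at most $\sqrt{\dD}n$ colours $i\in\Cols$, and the same bound holds symmetrically for $v\in Z^*$. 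The second case is identical using $d_{G_i}(a,B_i)\geq (\frac{1}{2}-2\eps)n$ for $a\in A_i$ in the $(\eps,\ECtwo)$-extremal structure. Summing over colours and using $|\Cols|\geq 2\eta n-1 \gg \sqrt{\dD}n$, every vertex satisfies the superregularity minimum sum-of-degrees condition with $d=1/2$, and every $G_i[W^*,Z^*]$ has at least $\frac{1}{2}|W^*||Z^*|$ edges.

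For the $(\eps',d)$-regularity condition, the bound $d_{G_i}(v,Z^*)\geq (1-3\sqrt{\dD})|Z^*|$ for all but $\sqrt{\dD}n$ colours $i$ shows that for any $V_1'\subseteq W^*$, $V_2'\subseteq Z^*$ and $\Cols'\subseteq\Cols$ of size at least $\eps'|W^*|$, $\eps'|Z^*|$ and $\eps'|\Cols|$ respectively, the density $\sum_{c\in\Cols'}e_{G_c}(V_1',V_2')/(|\Cols'||V_1'||V_2'|)$ is within $O(\sqrt{\dD})+O(\sqrt{\dD}/\eps')=o(1)$ of $1$, so regularity holds with $\eps'=\dD^{1/4}$. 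Theorem~\ref{th:blowup} applied with $|U_1|=|U_2|=1$, $\aA=2/(\eta n)$, $\nu=\eta/2$, $d=\dD_{\rm BL}=1/2$, $\DD=2$ and $\mu$ arbitrarily small then produces a transversal copy of the Hamilton path in $\bm{G}^*$ whose endpoints lie in $W^-$ and $T^+$ respectively, as required. The main (mild) obstacle is bookkeeping the parameter hierarchy $\eps\ll\dD\ll\eta$ so that the blow-up lemma's assumption $\eps'\ll\nu,d,1/\DD$ is met and the exceptional $O(\sqrt{\dD}n)$ bad colours per vertex are absorbed without harm.
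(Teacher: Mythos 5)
Your proposal takes the same route as the paper's proof: verify that $(G_i[W^*,Z^*]: i \in \Cols)$ is a superregular bipartite graph collection using the extremal structure from $(\dagger)$ (in particular that for $v\notin V_{\rm bad}$ we have $v\in W_i$ for all but $\sqrt{\dD}n$ colours, and that $G_i[W_i,Z_i]$ or $G_i[W_i]$ is nearly complete), and then apply the transversal blow-up lemma to embed a Hamilton path with $W^-$ and $T^+$ as target sets for its two endpoints. The paper does exactly this, introducing an auxiliary constant $\xi$ in the hierarchy and proving $(\xi,1-\xi)$-superregularity; you instead propose $\eps'=\dD^{1/4}$. One quantitative slip: the deviation in the regularity condition is of order $\sqrt{\dD}/(\eps'\eta)$, not $\sqrt{\dD}/\eps'$ — the extra $1/\eta$ comes from the fact that $|\Cols|=|W^*|+|Z^*|-1$ could be as small as about $2\eta n$, so $|\Cols'|\ge\eps'|\Cols|$ only gives $\sqrt{\dD}n/|\Cols'|\le\sqrt{\dD}/(\eps'\eta)$. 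Since one needs $\sqrt{\dD}/(\eps'\eta)<\eps'$, the choice $\eps'=\dD^{1/4}$ is off by the constant factor $\eta^{-1/2}$; taking e.g.\ $\eps'=\dD^{1/5}$, or introducing a fresh constant $\xi$ with $\dD\ll\xi\ll\eta$ as the paper effectively does, resolves this and is exactly the bookkeeping your final sentence anticipates. With that adjustment the argument is correct.
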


\begin{proof}
Choose a new constant $\xi$ with $\eta \ll \xi \ll 1$.
We will show that $\{G_i[W^*,Z^*]: i \in \Cols\}$ is $(\xi,1-\xi)$-superregular.

Suppose that $W \neq Z$, so $\{W^*,Z^*\}=\{A^*,B^*\}$ and $\Cols \subseteq \Cols(\ECtwo)$.
For every $i \in \Cols$, we have $e_{G_i}(W_i,Z_i) \geq |W_i||Z_i|-\eps n^2$
and hence $e_{G_i}(W^*,Z^*) > (1-\xi)|W^*||Z^*|$.
Now let $x \in W^*$ and $\Cols' \subseteq \Cols$ with $|\Cols'| \geq \xi|\Cols|$
and $Z' \subseteq Z^*$ with $|Z'| \geq \xi|Z^*|$. Then $x \notin V_{\rm bad}$, 
so $x \in W_i$ for all but at most $\sqrt{\dD}n$ colours in $i \in [m]$.
For each such $i$ we have $d_{G_i}(x,Z_i) \geq (\frac{1}{2}-2\eps)n = |Z_i|-\eps n$
by Lemma~\ref{lm:char}.
Thus $d_{G_i}(x,Z') \geq |Z'|-\eps n-|Z_1 \sd Z_i| \geq |Z'|-2\dD n$.
Thus $\sum_{i \in \Cols'}d_{G_i}(x,Z') \geq (|\Cols'|-2\sqrt{\dD}n)(|Z'|-2\dD n) > (1-\xi)|\Cols'||Z'|$.
An analogous statement holds for vertices in $Z^*$.
Thus $\sum_{i \in \Cols}d_{G_i}(x,Z^*) \geq (1-\xi)|\Cols||Z^*|$ for all $x \in W^*$
and $\sum_{i \in \Cols}d_{G_i}(y,W^*) \geq (1-\xi)|\Cols||W^*|$ for all $y \in Z^*$
and $(1-\xi)|\Cols'||W'||Z'| \leq \sum_{i \in \Cols'}e_{G_i}(W',Z') \leq |\Cols'||W'||Z'|$.
Thus $(G_i[W^*,Z^*]: i \in \Cols)$ is $(\xi,1-\xi)$-superregular.

Suppose instead that $W = Z$, so $\Cols \subseteq \Cols(\ECone)$.
A very similar argument shows that, since every $G_i[Z^*]$ and hence $G_i[W^*,Z^*]$ is almost complete,
the same conclusion holds here.

Let $s := |W^*|+|Z^*|$.
Since $|W^*|-|Z^*|=t\in\{0,1\}$,
there is an $s$-vertex path $P_s$ which is bipartite with parts $A_W,A_Z$ of size $|W^*|,|Z^*|$ respectively,
with $|\Cols|=|W^*|+|Z^*|-1$ edges and whose first vertex lies in $A_W$ and whose last lies in $A_T$. 
Thus we can apply Theorem~\ref{th:blowup} (the transversal blow-up lemma) with target sets $W^-,T^+$
for the first and last vertex of $P_s$ respectively
to obtain the required transversal Hamilton path.
\end{proof}

The next lemma shows that whenever there are many $(\eps,\ECtwo)$-extremal graphs,
we can find a short transversal path which covers bad vertices and colours.

\begin{lemma}\label{lm:cover1}
Suppose that~\ref{it:dagger} holds,
and
$
|\Cols(\ECtwo)|\geq \eta n
$. 
Given any $\ms{F} \subseteq [n]$ with $|\ms{F}| \leq 1$, 
there is a transversal path $P$ in $\bm{G}$ with endpoints $x,y$ 
such that the following holds:
 \begin{enumerate}[(i)]
\item $V_{\rm bad} \subseteq V(P)$ and $\Cols_{\rm bad} \sm \ms{F} \subseteq \col(P)$;
\item $|V(P)|\leq 19\sqrt{\dD} n$;
\item $x,y \notin V_{\rm bad}$ and there are distinct $c,c' \in \Cols(\ECtwo) \sm \col(P)$ such that $x\in A_c \cap A_1$ and $y\in B_{c'} \cap B_1$;
\item $\ms{F} \cap (\col(P) \cup \{c,c'\})=\emptyset$.
\end{enumerate}
\end{lemma}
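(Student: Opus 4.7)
The plan is to construct $P$ in three stages: fix endpoints and reserved colours, greedily grow a path from $x$ that absorbs every bad vertex and uses every remaining bad colour, then attach $y$. Since $|\Cols(\ECtwo)| \geq \eta n$ and $|\ms{F}| \leq 1$, we pick distinct $c, c' \in \Cols(\ECtwo) \sm \ms{F}$; $\propp$ yields $|(A_c \cap A_1) \sm V_{\rm bad}| \geq (\tfrac{1}{2}-\eps)n - \dD n - 3\sqrt{\dD}n \geq n/3$ and analogously for $(B_{c'} \cap B_1) \sm V_{\rm bad}$, from which we select the endpoints $x$ and $y$. Declare $S_0 := \ms{F} \cup \{c,c'\}$ (of size $\leq 3$) to be the colours forbidden from $\col(P)$, and initialise $V_{\rm rem} := V_{\rm bad}$ and $\Cols_{\rm rem} := \Cols_{\rm bad} \sm \ms{F}$.

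Starting with $P = x$, we iterate, writing $u$ for the current endpoint and $S := S_0 \cup \col(P)$ for the currently forbidden colour set. If $V_{\rm rem} \neq \es$, pick $v \in V_{\rm rem}$ and perform a two-edge extension $u \to w \to v$, preferentially using colours from $\Cols_{\rm rem}$ (and otherwise from $\Cols(\ECtwo) \sm S$), then remove $v$ from $V_{\rm rem}$ along with the at most two bad colours consumed. If $V_{\rm rem} = \es$ but $\Cols_{\rm rem} \neq \es$, pick $\tilde c \in \Cols_{\rm rem}$ and extend by a single edge $uu'$ of colour $\tilde c$ to any $u' \in N_{G_{\tilde c}}(u) \sm V(P)$; this exists because $d_{G_{\tilde c}}(u) \geq (\tfrac{1}{2}-\mu)n$ vastly exceeds the growing $|V(P)|$. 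Once both lists are empty, attach $y$ via one final two-edge extension of the same form. At most $|V_{\rm bad}| \leq 3\sqrt{\dD}n$ rounds contribute two vertices apiece, and at most $|\Cols_{\rm bad}| = 3\dD n$ rounds contribute one, so $|V(P)| \leq 1 + 6\sqrt{\dD}n + 3\dD n + 2 \leq 19\sqrt{\dD}n$; this gives (ii), while (i), (iii), (iv) hold by construction.

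The main obstacle is realising each two-edge extension $u \to w \to v$: a minimum-degree condition alone cannot guarantee such a path (consider two disjoint cliques with $u$ and $v$ on opposite sides). The leverage is the abundance of $(\eps, \ECtwo)$-extremal graphs. Selecting $c_1, c_2 \in \Cols(\ECtwo) \sm S$ suitably, the bounds $|A_{c_i} \sd A_1|, |B_{c_i} \sd B_1| \leq \dD n$ force $N_{G_{c_1}}(u)$ and $N_{G_{c_2}}(v)$ to lie, up to $O(\dD n)$ exceptions, on predictable sides of the partition $(A_1, B_1)$, yielding an intersection of size at least $(\tfrac{1}{2}-O(\dD))n$ that dwarfs $|V(P)|$. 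For $v \in C_1$ we use $d_{G_{c_2}}(v, A_{c_2} \cup B_{c_2}) \geq (\tfrac{1}{2}-\mu-2\eps)n$; for $v \in V^A_{\rm bad} \cup V^B_{\rm bad}$, the definition of a bad vertex ensures that all but at most $\sqrt{\dD}n$ candidate colours $c_2 \in \Cols(\ECtwo)$ place $v$ on its expected side, so a suitable $c_2$ exists in $\Cols(\ECtwo) \sm S$. In every case a fresh intermediary $w$ outside $V(P) \cup \{v\}$ can be found, completing the extension.
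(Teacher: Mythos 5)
Your overall plan is in the same spirit as the paper's (greedily cover $V_{\rm bad}$ and $\Cols_{\rm bad}$ with a short transversal path, using the abundance of $\ECtwo$-colours to make the connections), but the execution has a genuine gap centred on the ``two-edge extension'' $u \to w \to v$, which does not always exist.

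The problem is one of parity across the partition $(A_1, B_1)$. Suppose the current endpoint $u$ lies (effectively) in $A_1$ and you try to extend using $c_1 \in \Cols(\ECtwo) \sm S$. Then $N_{G_{c_1}}(u)$ is, up to $O(\dD n)$ exceptions, a subset of $B_1$. If you then choose $c_2 \in \Cols(\ECtwo) \sm S$ for the edge $wv$ and $v$ also sits (for that colour) in $A_1$, then $N_{G_{c_2}}(v)$ is also mostly a subset of $B_1$, the intersection is large, and you win. But if $v$ sits in $B_1$ for $c_2$, then $N_{G_{c_2}}(v)$ is mostly a subset of $A_1$, and $N_{G_{c_1}}(u)\cap N_{G_{c_2}}(v)$ is \emph{tiny} rather than $(\tfrac12 - O(\dD))n$, so no intermediary $w$ can be found. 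The situation is worse if the edge $wv$ is coloured by a bad colour $\tilde c \in \Cols_{\rm rem}$, which you must do at some point to cover bad colours: $G_{\tilde c}$ carries no structural information, so $N_{G_{\tilde c}}(v)$ is an arbitrary set of size about $n/2$ which could be essentially disjoint from $N_{G_{c_1}}(u)$. The same parity obstacle recurs at the final attachment of $y$ (whose side $B_1$ is fixed) to the current endpoint (whose side is uncontrolled after you've done several one-edge extensions with bad colours). Your claim that ``a fresh intermediary $w$ outside $V(P)\cup\{v\}$ can be found'' is therefore unjustified, and fixing it would require the same kind of case split the paper uses: when parities mismatch, a three-edge extension using an extra $\ECtwo$-colour is needed, but you never describe one.

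There is also a misreading of the definition of $V^Z_{\rm bad}$. You write that ``the definition of a bad vertex ensures that all but at most $\sqrt{\dD}n$ candidate colours $c_2 \in \Cols(\ECtwo)$ place $v$ on its expected side.'' In fact $V^Z_{\rm bad}$ is defined by the condition that $x \notin Z_i$ for \emph{at least} $\sqrt{\dD}n$ colours $i \in [m]$; this is a lower bound with no matching upper bound, so it gives no guarantee at all of the kind you invoke (the guarantee you state actually holds for vertices \emph{outside} $V_{\rm bad}$). Similarly, for $v \in C_1$ the inequality $d_{G_{c_2}}(v, A_{c_2} \cup B_{c_2}) \geq (\tfrac12-\mu-2\eps)n$ is correct but uninformative about \emph{which} side of the partition carries the neighbourhood, which is exactly what the parity argument needs. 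The paper sidesteps both issues: it first builds vertex-disjoint $3$-vertex paths $y_ix_iy_i'$ with the bad vertex $x_i$ in the \emph{middle}, so that both endpoints $y_i, y_i'$ are free and can be chosen from sets $N_{G_c}(x_i)\cap Z$ with $Z \in \{A,B\}$ chosen favourably (only using that $d_{G_c}(x_i)\geq(\tfrac12-\mu)n$, not where $x_i$ sits); the colour-covering is absorbed here. It then connects these paths using only $\ECtwo$-colours with a two-case analysis, using one extra vertex and colour precisely when the endpoints land on opposite sides.
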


\begin{proof}
Since $|V_{\rm bad}| \leq 3\sqrt{\dD}n$ and $|\Cols_{\rm bad}| \leq 3\dD n$,
by adding vertices to $V_{\rm bad}$ if necessary we may assume that
$|\Cols_{\rm bad}| < r:=|V_{\rm bad}| \leq 3\sqrt{\dD}n$.
Let $A := A_1 \sm V_{\rm bad}$ and $B := B_1 \sm V_{\rm bad}$.

We will find a transversal family $\mc{P}=\{y_ix_iy_i': i \in [r]\}$ of vertex-disjoint $3$-vertex paths, with colour set $\{c_i,c_i': i \in [r]\} \supseteq \Cols_{\rm bad}$ where $V_{\rm bad}=\{x_1,\ldots,x_r\}$ and
$\{y_i',y_i':i \in [r]\} \subseteq A \cup B$.
For this, let $x\in V_{\rm bad}$ and let $c,c'\in [m]$ be distinct colours.
Since $d_{G_{c}}(x)\geq (\frac{1}{2}-\mu)n$,
there are $Z,Z' \in \{A,B\}$ such that $|N_{G_{c}}(x)\cap Z|\geq n/8$
and $N_{G_{c'}}(x) \cap Z'| \geq n/8$. 
Choose $y\in N_{G_{c}}(x)\cap Z$ and $y'\in N_{G_{c'}}(x) \cap Z'$. 
This gives the path $yxy'$ using the given colours $c,c'$.
We find such paths for every $x \in V_{\rm bad}$, each time using unused vertices,
which is possible since there are at least $n/8$ choices for each vertex and only $2r\leq 6\sqrt{\dD}n$ are used in total,
and using all of the $3\dD n$ colours of $\Cols_{\rm bad}$
among the $2r$ colours used in total.

Next, we connect the paths of $\mc{P}$ into a single short transversal path $P$. 
We start with two arbitrary paths $P_1,P_2$ in $\mc{P}$,
with endpoints $y_1,y_1'$ and $y_2,y_2'$ respectively.
Note that each of these endpoints is not in $V_{\rm bad}$ and is therefore $G_i$-good for at least $|\Cols(\ECtwo)|-\sqrt{\dD}n-2r\geq \eta n/2$ unused colours $i\in \Cols(\ECtwo)$.
Thus we can choose distinct unused colours $j_1,j_2\in \Cols(\ECtwo)$ such that $y_1'$ is $G_{j_1}$-good and $y_2$ is $G_{j_2}$-good.
Without loss of generality, we have the following two cases:

\medskip
\noindent
\emph{Case 1}: $y_1'\in A_{G_{j_1}}$ and $y_2\in A_{G_{j_2}}$.
Choose a common neighbour $y\in N_{G_{j_1}}(y_1')\cap N_{G_{j_2}}(y_2)\cap B$ that avoids $P_1,P_2$. There are at least $n/3$ choices for $y$
since $y_1'$ is missing at most $\eps n$ neighbours in $B_{j_1}$ in the graph $G_{j_1}$
and $|B_{j_1} \sm B| \leq |V_{\rm bad}|+|B_{j_1} \sd B_1| \leq 4\sqrt{\dD}n$,
and similarly for $y_2$ and $G_{j_2}$.
Let $P_{12}:=P_1y_1'yy_2P_2$ with colour set $\col(P)=\col(P_1)\cup  \{j_1,j_2\} \cup \col(P_2)$. 

\medskip
\noindent
\emph{Case 2}: $y_1'\in A_{G_{j_1}}$ and $y_2\in B_{G_{j_2}}$.
Choose an unused colour $j_3\in \Cols(\ECtwo)$, for which there are at least $\eta n/2$ choices.
Choose a vertex $y\in N_{G_{j_1}}(y_1')\cap B_{G_{j_3}}$ that avoids $P_1,P_2$;
there are at least $n/3$ choices.
Choose a common neighbour $z\in N_{G_{j_3}}(y)\cap N_{G_{j_2}}(y_2)\cap A$ that is distinct from $y_1'$ and avoids $P_1,P_2$;
there are at least $n/3$ choices. 
Let $P_{12} :=P_1y_1'yzy_2P_2$ with colour set $\col(P)=\col(P_1)\cup \{j_1,j_2,j_3\} \cup \col(P_2)$.

\medskip
By considering $P_{12},P_3$ and so on, we can find a transversal path $P_{12\ldots r}$ that includes every path in $\mc{P}$.
For this, we need to argue that we can always choose unused vertices and colours;
this is indeed true since in the first step there are at least $n/3$ choices for any vertex and at least $\eta n/2$ choices for any colour, and at most $3r < \eta n/4$ vertices and colours are used in total.
 We have $|V(P_{12\ldots r})| \leq 6r$.

Using very similar arguments, we can extend the path by at most one vertex and colour
in $\Cols(\ECtwo)$ so that its endpoints are in $A$ and $B$.
Since they do not lie in $V_{\rm bad}$, there are many choices of unused colour for $c,c'$
to satisfy~(iii).
The final path has length at most $6r+1 \leq 19\sqrt{\dD}n$.
To achieve~(iv), we simply remove $\ms{F}$ from $\Cols_{\rm bad}$ and choose not to use $\ms{F}$ at any step,
which does not affect any of the above estimates.
\end{proof}

The next lemma shows that some additional assumptions guarantee a short transversal path
that not only covers bad vertices and colours, but also balances $A_1$ and $B_1$.
These assumptions are that almost every graph is $(\eps,\ECtwo)$-extremal,
and moreover both parts $A_1,B_1$ contain many internal edges from these graphs.

\begin{lemma}\label{lm:cover2}
Suppose that~\ref{it:dagger} holds,
and
$$
|\Cols(\ECone)| \leq \eta n 
\quad\text{and}\quad
e_{\bm{G}(\ECtwo)}(A_1)\geq 30\eta n^3
\quad\text{and}\quad
e_{\bm{G}(\ECtwo)}(B_1)\geq 30\eta n^3.
$$
Then $\bm{G}$ contains a transversal path $P$ with endpoints $x,y$ 
such that the following holds:
 \begin{enumerate}[(i)]
\item $V_{\rm bad} \subseteq V(P)$ and $\Cols_{\rm bad}\cup \Cols(\ECone)\subseteq \col(P)$;
\item $|V(P)|\leq 4\eta n$;
\item $|A_1\sm V(P)|=|B_1\sm V(P)|$;
\item $x,y \notin V_{\rm bad}$ and there are distinct $c,c' \in \Cols(\ECtwo) \sm \col(P)$ such that $x\in A_c \cap A_1$ and $y\in B_{c'} \cap B_1$.
\end{enumerate}
\end{lemma}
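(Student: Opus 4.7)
The plan is to prove this strengthening of Lemma~\ref{lm:cover1} in two stages: first I would build a short transversal path $P_0$ satisfying (i), (ii) and (iv) (and approximately (iii)) by a direct adaptation of the Lemma~\ref{lm:cover1} construction applied to the enlarged ``bad colour set'' $\Cols_{\rm bad} \cup \Cols(\ECone)$; then I would correct the residual $A_1$/$B_1$ imbalance by extending $P_0$ inside $A_1$ (or $B_1$) using colours from $\Cols(\ECtwo)$ that have many internal edges in the respective side.

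For Step~1, the enlarged bad colour set has size at most $(3\dD + \eta)n \leq 1.1\eta n$ since $\dD \ll \eta$, while $|V_{\rm bad}| \leq 3\sqrt{\dD} n \ll \eta n$. Running essentially the construction of Lemma~\ref{lm:cover1}---covering each vertex of $V_{\rm bad}$ with a $3$-vertex path using two fresh colours from the enlarged bad set, attaching short detour edges to exhaust any leftover bad colours, and connecting the segments using $\Cols(\ECtwo)$ colours as connectors---produces a transversal path $P_0$ with $V_{\rm bad} \sub V(P_0)$, $\col(P_0) \sups \Cols_{\rm bad} \cup \Cols(\ECone)$, endpoints $x_0 \in A_1 \sm V_{\rm bad}$ and $y_0 \in B_1 \sm V_{\rm bad}$, and $|V(P_0)| \leq 2\eta n$. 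During the construction I use my freedom in placing each edge of an $(\eps,\ECone)$-extremal colour inside either $A_c$ or $B_c$ (both sides being almost-complete in such graphs, and each approximating $A_1,B_1$ up to symmetric difference $\dD n$) to keep the imbalance $s := |V(P_0) \cap B_1| - |V(P_0) \cap A_1|$ as small as possible, but in any case $|s| \leq |V(P_0)| \leq 2\eta n$.

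Step~2 is the balancing step. Assume WLOG $s \geq 0$; the case $s \leq 0$ is symmetric, using $e_{\bm{G}(\ECtwo)}(B_1) \geq 30\eta n^3$ in place of the analogous hypothesis on $A_1$. An averaging argument applied to $e_{\bm{G}(\ECtwo)}(A_1) \geq 30\eta n^3$ yields a set $\Cols^A \subseteq \Cols(\ECtwo)$ of $\Omega(\eta n)$ colours with $e_{G_c}(A_1) \geq 15\eta n^2$. For each such $c$, since $|A_1 \sd A_c| \leq \dD n$ with $\dD \ll \eta$, the $(\eps,\ECtwo)$-extremality of $G_c$ forces its sparse side to be $B_c$ (as $e_{G_c}(A_c) \geq e_{G_c}(A_1) - \dD n^2 \gg \eps n^2$), and a further averaging gives that $\Omega(\eta n)$ vertices of $A_c$ have $A_c$-degree $\Omega(\eta n)$ in $G_c$. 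I then extend $P_0$ at the $x_0$-end by appending $s$ new vertices of $A_1 \sm V_{\rm bad}$ one at a time, each time using a fresh colour from $\Cols^A \sm \col(P_0)$. Selecting $x_0$ in Step~1, and each subsequent $x_i$, to lie in the (large) set of vertices having high $A_1$-degree in many candidate colours---which is possible by a double-counting argument exploiting the simultaneous abundance of such colours and vertices---ensures the greedy extension always succeeds. The final path $P$ has $|V(P)| \leq 2\eta n + s \leq 4\eta n$ and satisfies (iii). Its endpoints $x := x_s$ and $y := y_0$ lie outside $V_{\rm bad}$, so I finish by choosing distinct $c, c' \in \Cols(\ECtwo) \sm \col(P)$ with $x \in A_c$ and $y \in B_{c'}$; this is possible because $|\Cols(\ECtwo)| \geq (1-2\eta)n$, each of $x,y$ belongs to $A_i$ (resp.\ $B_i$) for all but $\sqrt{\dD}n$ colours $i \in [m]$, and $|\col(P)| \leq 4\eta n$.

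The hardest step will be the extension in Stage~2: a given vertex of $A_1$ can have \emph{no} $A_1$-neighbour in a given $(\eps,\ECtwo)$-extremal graph $G_c$, since in such a graph most edges run between $A_c$ and $B_c$ rather than inside $A_c$. Vertex and colour choices therefore cannot be made independently at each step; they must be coupled through the averaging argument sketched above, which relies essentially on the hypothesis $e_{\bm{G}(\ECtwo)}(A_1) \geq 30\eta n^3$ producing many colours with many ``rich'' vertices simultaneously, so that a compatible (vertex, colour) pair is always available.
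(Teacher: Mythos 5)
Your strategy is essentially the paper's: build a short covering path handling $V_{\rm bad}$, $\Cols_{\rm bad}$ and $\Cols(\ECone)$, then balance $A_1$ against $B_1$ by extending into the heavy side using $\Cols(\ECtwo)$ colours with many internal edges there (which is exactly why both $e_{\bm{G}(\ECtwo)}(A_1)$ and $e_{\bm{G}(\ECtwo)}(B_1)$ are assumed large), and finally pick $c,c'$ for condition~(iv) from the abundance available to endpoints outside $V_{\rm bad}$. The paper organizes the covering step in two stages --- apply Lemma~\ref{lm:cover1} as a black box, then thread through all of $\Cols(\ECone)$ by routing every new vertex into $B_1$ after a single $\Cols(\ECtwo)$-hop from $\wt{x}\in A_1$, which forces the residual imbalance $t\geq 1$ and hence a nonempty balancing path inside $A_1$ that restores one endpoint to $A_1$ --- whereas you build the covering path in one go with endpoints $x_0\in A_1$, $y_0\in B_1$ already in place and WLOG on the sign of the imbalance $s$. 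Both variants are fine; yours is arguably slightly cleaner on endpoint parity, while the paper's avoids having to argue that the one-shot construction can be made to land its endpoints correctly.

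The step that needs to be made precise is the balancing extension. Your averaging is per colour and per vertex (``$\Omega(\eta n)$ colours with $e_{G_c}(A_1)\geq 15\eta n^2$'', ``$\Omega(\eta n)$ rich vertices per colour''), but the greedy construction needs, at each step, an \emph{unused} colour for the specific next edge $x_{i-1}x_i$, and a priori rich colours and rich vertices need not align on a given pair. You correctly flag this tension and appeal to a ``double-counting/coupling'' argument, but that is exactly the part that must be written out. The clean realisation, which the paper uses, is a per-pair statement: set $A := A_1\sm V(P_0)$, let $D := \{uv\in\binom{A}{2} : |\{i\in\Cols(\ECtwo):uv\in G_i\}|\geq 10\eta n\}$; the hypothesis $e_{\bm{G}(\ECtwo)}(A_1)\geq 30\eta n^3$ (even after discarding the $\leq 2\eta n$ used vertices and colours) forces $|D|\geq 10\eta n^2$, hence $D$ contains a subgraph $D'$ with $\dD(D')\geq 10\eta n$, and the balancing path is built greedily inside $D'$, assigning at each step an unused colour from the $\geq 10\eta n$ candidates on that edge --- comfortably more than the $\leq 4\eta n$ colours ever used. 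Without this per-pair packaging your Step~2 is not complete as written.
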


\begin{proof}
We have that $|\Cols(\ECtwo)| = m -|\Cols(\ECone)| \geq (1-2\eta)n$.
Thus Lemma~\ref{lm:cover1} applied with $\ms{F} := \emptyset$
implies that $\bm{G}$ contains a transversal path $\wt{P}$
with endpoints $\wt{x},\wt{y}$ such that
$V_{\rm bad} \subseteq V(\wt{P})$,
$\Cols_{\rm bad} \subseteq \col(\wt{P})$,
$|V(\wt{P})| \leq 19\sqrt{\dD}n$,
and $\wt{x},\wt{y} \notin V_{\rm bad}$ and 
$\wt{x} \in A_1$ and $\wt{y} \in B_1 \cap B_c$ for some $c \in \Cols(\ECtwo) \sm \col(\wt{P})$.

Without loss of generality, suppose that $|A_1 \sm V(\wt{P})| - |B_1 \sm V(\wt{P})| := \wt{t} \geq 0$.
Next we greedily extend $\wt{P}$ to a path whose colour set contains $\Cols(\ECone)$.
Write $\{a_1,\ldots,a_r\}$ for the collection of unused colours in $\Cols(\ECone)$ and
let $a_0,a_{r+1}$ be distinct unused colours in $\Cols(\ECtwo)$, for which $\wt{x} \in A_{a_0}$, of which there are at least $\eta n/2$ choices because $\wt{x} \notin V_{\rm bad}$.
Choose an unused vertex $y_0 \in N_{G_{a_0}}(\wt{x},B_1 \cap B_{a_1})$.
There are at least $n/3$ choices since $|B_{a_0} \sd (B_1 \cap B_{a_1})| \leq 2\dD n$ by~\ref{it:dagger} and $d_{G_{a_0}}(\wt{x},B_{a_0}) \geq (\frac{1}{2}-2\eps)n = |B_{a_0}|-\eps n$ because $a_0 \in \Cols(\ECtwo)$ and $\wt{x} \in A_{a_0}$.
Now, given $i \in [r]$ and $y_{i-1} \in B_{a_i}$, we can
choose an unused vertex $y_i \in N_{G_{a_i}}(y,B_1 \cap B_{a_{i+1}})$; again there are at least $n/3$ choices
since $\{a_i,a_{i+1}\} \in [m]$ and~\ref{it:dagger} implies
$|B_{a_i} \sd (B_1 \cap B_{a_{i+1}})| \leq 2\dD n$,
and
$a_i \in \Cols(\ECone)$ implies $d_{G_{a_i}}(y,B_{a_i}) \geq (\frac{1}{2}-2\eps)n=|B_{a_i}|-\eps n$.
Thus we can obtain the transversal path $\wt{x}y_0y_1\ldots y_r$ using colours $a_0,a_1,\ldots,a_r$, and with $y_r \in B_1 \cap B_{a_{r+1}}$.

Let $P_1 = \wt{y}\wt{P}\wt{x}y_0y_1y_1\ldots y_r$ be the final transversal path obtained by concatenation with $\wt{P}$ and
let $A := A_1 \sm V(P_1)$ and $B := B_1 \sm V(P_1)$.
It satisfies (i) and has $|V(P_1)| \leq |V(\wt{P})| + |\Cols(\ECone)|+1 \leq 2\eta n$,
and also $\left| |A|-|B| \right| \leq 2\eta n$.
It has endpoints $\wt{y} \in B_1 \cap B_c$ and $\wt{z} := y_r \in B_b$
where $b := a_{r+1} \in \Cols(\ECtwo)$.
Also, $|A_1 \sm V(P_1)| - |B_1 \sm V(P_1)| := t \geq \wt{t}+1 \geq 1$.
After removing used colours, 
we have $\sum_{i\in \Cols(\ECtwo)}e_{G_i}(A)\geq 20\eta n^3$ and $\sum_{i\in \Cols(\ECtwo)}e_{G_i}(B)\geq 20\eta n^3$ since we consumed at most $2\eta n$ vertices from $A_1\cup B_1$ and at most $2\eta n$ colours from $\Cols(\ECtwo)$ in building $P_1$.
For each vertex pair $uv \in \binom{V}{2}$, define
$$
c(uv):=\{i\in \Cols(\ECtwo) : uv\in E(G_i)\}
\quad\text{and}\quad
D :=\{uv : u,v\in A\text{ and } c(uv)\geq 10\eta n\}. 
$$
Then we have $20\eta n^3\leq |D|n+n^2\cdot 10\eta n$ and thus $|D|\geq 10\eta n^2$. It follows that $D$ contains a subgraph $D'$ with minimum degree at least $10\eta n$.

Let $z \in N_{G_{b}}(\wt{z},V(D'))$ be an unused vertex. Such a $z$ exists since $b \in \Cols(\ECtwo)$
and $\wt{z} \in B_{b}$ imply that $d_{G_b}(\wt{z},A_b) \geq (\frac{1}{2}-2\eps)n$
and $|V(D') \cap A_b| \geq |V(D')| - |A \sd A_b| \geq 10\eta n - 2\eta n -\dD n \geq 7\eta n$
which is larger than the $2\eta n$ vertices used so far.
Greedily construct a path of unused vertices inside $D'$ starting at $z$, ending at some $w \in B_{c'} \cap B_1$ where $c' \in \Cols(\ECtwo)$ is unused, and consisting of $t$ vertices.
Greedily assign unused colours from the lists guaranteed by the definition of $D$.
The path $P$ obtained by concatenating this transversal path with $P_1$
has endpoints $\wt{y},w$ and has all of the required properties.
\end{proof}

The final lemma of this section combines the previous ones to find a transversal Hamilton cycle
in three cases.
The proof proceeds by using Lemma~\ref{lm:cover1} or~\ref{lm:cover2} to find a short transversal path covering bad vertices and colours, and then covering the remaining vertices with three long paths guaranteed by Lemma~\ref{lm:almostcp1}.

\begin{lemma}\label{lm:rhc}
Suppose that~\ref{it:dagger} holds,
along with one of the following:
\begin{align*}
\text{either } (i)\quad &|\Cols(\ECone)| < \eta n
\quad\text{and}\quad
\min\left\{ e_{\bm{G}(\ECtwo)}(A_1),
e_{\bm{G}(\ECtwo)}(B_1) \right\}
\geq 30\eta n^3;\\
\text{or }(ii)\quad &|\Cols(\ECone)| \geq \eta n
\quad\text{and}\quad
\max\left\{ e_{\bm{G}(\ECtwo)}(A_1),
e_{\bm{G}(\ECtwo)}(B_1)\right\}
\geq 30\eta n^3;\\
\text{or }(iii)\quad 
&|\Cols(\ECone)| \geq \eta n
\quad\text{and}\quad
|\Cols(\ECtwo)| \geq \eta n
\quad\text{and}\quad G_n \cong K_n.
\end{align*}
Then $\bm{G}$ contains a transversal Hamilton cycle.
\end{lemma}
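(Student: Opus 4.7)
The plan in all three cases is the same at a high level: first absorb the bad colours and bad vertices into a short transversal path $P$ using Lemma~\ref{lm:cover1} or Lemma~\ref{lm:cover2}; then cover the remaining vertex set by a few long transversal paths supplied by the transversal blow-up lemma, packaged here as Lemma~\ref{lm:almostcp1}; and finally splice everything together with a few connecting edges into a transversal Hamilton cycle. Write $A:=A_1\sm V(P)$, $B:=B_1\sm V(P)$ and $\Cols'$ for the unused colours. By item~(iii) of either covering lemma, the endpoints $x,y$ of $P$ lie in $A_1$ and $B_1$ respectively, outside $V_{\rm bad}$, and come with two distinguished colours $c,c'\in\Cols'\cap\Cols(\ECtwo)$ reserved for attaching the long paths to $x$ and $y$.

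In case~(i), the hypotheses of Lemma~\ref{lm:cover2} are satisfied. The resulting $P$ already absorbs $\Cols(\ECone)$ entirely and balances $|A|=|B|$, so every remaining colour in $\Cols'$ lies in $\Cols(\ECtwo)$. A single application of Lemma~\ref{lm:almostcp1} with $(W,Z)=(A,B)$, target endpoint sets $W^-=N_{G_c}(x)\cap A$ and $T^+=N_{G_{c'}}(y)\cap B$ (both linear in $n$ since $x,y\notin V_{\rm bad}$), and colour set $\Cols'\sm\{c,c'\}$ produces a transversal Hamilton path of $A\cup B$, which concatenates with $P$ via the edges of colours $c,c'$ to form the Hamilton cycle.

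In cases~(ii) and~(iii) I would instead apply Lemma~\ref{lm:cover1}, with $\ms{F}=\es$ in~(ii) and $\ms{F}=\{n\}$ in~(iii) so that the edge of $G_n\cong K_n$ is left available for a parity correction later. The remaining task is to cover $A\cup B$ by a transversal path $Q$ joining $N_{G_c}(x)\cap B$ to $N_{G_{c'}}(y)\cap A$. I would split $Q$ into three sub-paths: a bipartite path $P_{AB}$ in $\bm{G}[A^0,B^0]$ for a partition $A=A^0\cup A^{10}\cup A^{11}$, an internal-$A$ path $P_A$ in $\bm{G}[A^{10},A^{11}]$, and an internal-$B$ path $P_B$ in $\bm{G}[B^{01},B^{11}]$ for an analogous partition of $B$, joined by two further edges. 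Every colour in $\Cols(\ECone)\cap\Cols'$ must appear internally; choose non-negative integers $a_A,a_B$ summing to $|\Cols(\ECone)\cap\Cols'|$ to distribute them between $A$ and $B$, and small non-negative integers $b_A,b_B$ counting internally-placed $\Cols(\ECtwo)$-colours (drawn from those graphs having many internal edges on the relevant side). A short degree count for a Hamilton path of $A\cup B$ going from $B$ to $A$ forces the identity $(a_A+b_A)-(a_B+b_B)=|A|-|B|$. Fix such a choice, set $|A^{10}|+|A^{11}|=a_A+b_A$ with the two summands differing by at most one, similarly for $B$, and $|A^0|=|B^0|$ equal to what is left; partition $\Cols'$ into $\Cols^{AB},\Cols^A,\Cols^B$ and four remaining colours for connecting edges (two of which are $c$ and $c'$). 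Three applications of Lemma~\ref{lm:almostcp1} with target endpoint sets dictated by the connecting edges produce $P_{AB},P_A,P_B$, and the four free edges splice them together with $P$ into the Hamilton cycle.

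The main obstacle is the arithmetic: the integers $a_A,a_B,b_A,b_B\geq 0$ must simultaneously solve the degree identity, stay within the upper bounds dictated by the number of $\Cols(\ECtwo)$-colours having many internal edges on each side, and leave a colour of the correct type available for each of the four free connecting edges. In case~(ii), WLOG $e_{\bm{G}(\ECtwo)}(A_1)\geq 30\eta n^3$, so very many $\Cols(\ECtwo)$-colours admit many internal $A$-edges, and adjusting $b_A\in\{0,1\}$ with $b_B=0$ is enough to resolve any parity obstruction. In case~(iii) neither internal reserve need be large, but the reserved edge of $G_n\cong K_n$ is now available and may serve as an internal $A$-edge, an internal $B$-edge, or a crossing edge at will; this one extra degree of freedom closes the parity gap.
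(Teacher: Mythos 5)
Your plan follows essentially the same route as the paper: case~(i) via Lemma~\ref{lm:cover2} plus a single application of Lemma~\ref{lm:almostcp1}, and cases~(ii)/(iii) via Lemma~\ref{lm:cover1} (with $\ms{F}=\es$ resp.\ $\ms{F}=\{n\}$) followed by the same three-path decomposition (a crossing $A^0$--$B^0$ path on $\Cols(\ECtwo)$-colours and two internal paths on $\Cols(\ECone)$-colours), with the $|A|$ versus $|B|$ imbalance absorbed by how the $\Cols(\ECone)$-colours are split between the two internal paths and the residual parity repaired by one specially placed internal edge (an $\ECtwo$-colour with many internal edges on the dense side in~(ii), the colour $n$ in~(iii)) --- this is exactly the paper's argument, merely phrased through the integers $a_A,a_B,b_A,b_B$ and a parity congruence instead of explicit partition-size equations. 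One small slip: in case~(i) the target sets should be $W^-=N_{G_{c'}}(y)\cap A$ and $T^+=N_{G_c}(x)\cap B$ (as in the paper), since $x\in A_c$ with $c\in\Cols(\ECtwo)$ only guarantees many $G_c$-neighbours of $x$ in $B$, so $N_{G_c}(x)\cap A$ could be empty; the pairing you use correctly in cases~(ii)/(iii) fixes this immediately.
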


\begin{proof}
Suppose that (i) holds.
By Lemma \ref{lm:cover2}, 
there is a transversal path $P$ with endpoints $x \in A_c$ and $y \in B_{c'}$
where $c,c' \in \Cols(\ECtwo) \sm \col(P)$ are distinct,
$\ms{A} := [n] \sm (\col(P)\cup\{c,c'\}) \subseteq \Cols(\ECtwo)$
and $|A|=|B| \geq (\frac{1}{2}-4\eta)n$
where $Z := Z_1 \sm V(P)$ for $Z \in \{A,B\}$.
So
$$
|\ms{A}|=n-|\col(P)|-2=|A|+|B|+|V(P)|-|\col(P)|-2=|A|+|B|-1.
$$
Let $A^- := N_{G_{c'}}(y,A)$ and $B^+ := N_{G_c}(x,B)$.
By Lemma~\ref{lm:almostcp1} there is a transversal path $P'$
with colour set $\ms{A}$
starting at some $x' \in A^-$ and ending at some $y' \in B^+$.
Concatenating this with $P$ using the connecting edges $yx'$ of colour $c'$
and $y'x$ of colour $c$, we obtain a transversal Hamilton cycle, proving Case~(i). 

Thus it remains to consider Cases~(ii) and~(iii).
In Case~(ii), set $\ms{F} := \emptyset$ and in Case~(iii), set $\ms{F} := \{n\}$.
In both cases, we have $|\Cols(\ECtwo)| \geq \eta n$.
Thus we can apply Lemma~\ref{lm:cover1} with $\ms{F}$ to obtain a transversal path $P$ with endpoints $x,y\notin V_{\rm bad}$ such that 
$V_{\rm bad} \subseteq V(P)$ and $\Cols_{\rm bad} \sm \ms{F} \subseteq \col(P)$
and $|V(P)| \leq 19\sqrt{\dD}n$, and $\ms{F} \cap (\col(P) \cup \{c,c'\} = \emptyset$, and
$x\in A_1 \cap A_c$ and $y\in B_1 \cap B_{c'}$ for some colours $c,c'\in \Cols(\ECtwo)$. 
Let $Z := Z_1 \sm V(P)$ for $Z \in \{A,B\}$.

In the next step, we proceed differently in each case.
In Case~(ii), by symmetry, we assume that $e_{G(\ECtwo)}(A_1)\geq 30\eta n^3$.
Let $D:=\{uv : u,v\in A\text{ and }c(uv)\geq 10\eta n\}$ as defined in the proof of  Lemma \ref{lm:cover2} 
where $c(uv):=\{i\in \Cols(\ECtwo) : uv\in E(G_i)\}$. 
There is a subgraph $D'$ of $D$ with minimum degree at least $10\eta n$.
In Case~(iii), let $D'$ be an arbitrary clique of size $10\eta n$ inside $A$.

We resume a unified approach for both cases.
Let $c_1,c_2 \in \Cols(\ECtwo)$ be distinct unused colours. 
Let $\ms{A}(H) := \Cols(H) \sm (\col(P) \cup \{c,c',c_1,c_2\})$
for $H \in \{\ECone, \ECtwo\}$.
We have
$$
|A|+|B|-3=n-|V(P)|-3 = n-|\col(P)|-4 = |\ms{A}(\ECone)|+|\ms{A}(\ECtwo)|
$$
and $\left| |A|-|B|\right| \leq 5\dD n$.
\begin{figure}
\begin{center}
\includegraphics{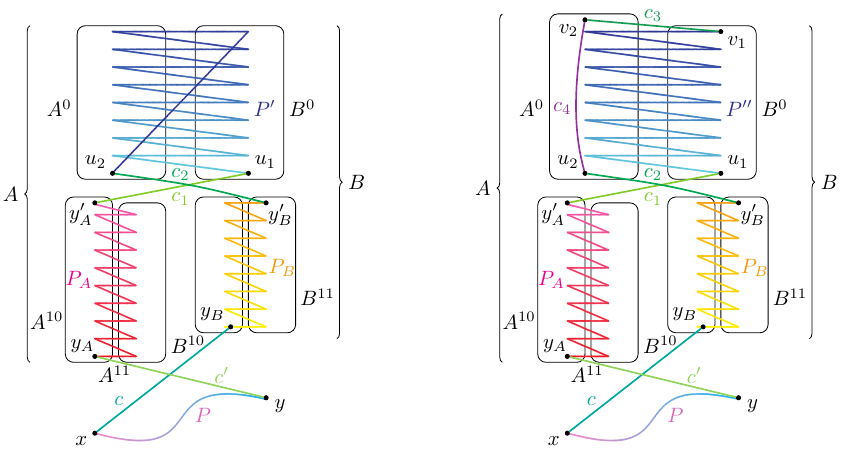}
\end{center}
\caption{Finding a transversal Hamilton cycle in Lemma~\ref{lm:rhc}(ii) and~(iii). Here, $|A^{10}|-|A^{11}|=1$ and $|B^{10}|-|B^{11}|=0$. On the left, $|A^0|-|B^0|=0$,
while on the right, $|A^0|-|B^0|=1$.}\label{fig:ext}
\end{figure}
Choose partitions
$A = A^0 \cup A^1$ and $A^1 = A^{10} \cup A^{11}$,
and
$B = B^0 \cup B^1$ and $B^1 = B^{10} \cup B^{11}$
such that
$$
0 \leq |A^{0}| - |B^{0}| \leq 1,
\quad
|\ms{A}(\ECtwo)|=|A^0|+|B^0|-1
\quad\text{and}\quad
\dD(D'[V(D') \cap A^{0}]) \geq \eta^2 n,
$$
and a partition $\ms{A}(\ECone) = \Cols_A \cup \Cols_B$ 
with
$$
0 \leq |A^{10}|-|A^{11}|,|B^{10}|-|B^{11}| \leq 1,
\quad
|\Cols_A|=|A^{10}|+|A^{11}|-1
\quad\text{and}\quad
|\Cols_B|=|B^{10}|+|B^{11}|-1.
$$
The only non-trivial part of this is the assertion about $D'$
which holds since $|A^0| \geq |\Cols(\ECtwo)|/3 \geq \eta n/3$,
and we could take~e.g.~a random partition and appeal to a Chernoff bound.
Let 
$$
\bm{G}^0 =(G_i[A^0,B^0]:i \in \ms{A}(\ECtwo))\quad\text{and}\quad
\bm{G}^Z =(G_i[Z^{10},Z^{11}]:i \in \Cols_Z)\quad\text{ for }Z \in \{A,B\}.
$$
Lemma~\ref{lm:almostcp1} with $W=Z$ implies that there is a transversal path $P_A$ inside $\bm{G}^A$ with colour set $\Cols_A$ and transversal path $P_B$ inside $\bm{G}^B$ with colour set $\Cols_B$ such that $P_A$ starts at some $y_A\in N_{G_{c'}}(y) \cap A^{10}$ and ends at some $y_A'\in A_{c_1} \cap A^1$, and $P_B$ starts at some $y_B\in N_{G_{c}}(x) \cap B^{10}$ and ends at some $y_B'\in B_{c_2} \cap B^1$.
(For example, to find $P_A$, we take $(W^*,Z^*)=(A^{10},A^{11})$.)
To complete the proof, it suffices to find a transversal path $P'$ in $\bm{G}^0$ with colour set $\ms{A}(\ECtwo)$,
with startpoint $u_1 \in B^0$ where $c_1 \in L(u_1y_A')$, and endpoint $u_2 \in A^0$ where $c_2 \in L(u_2y_B')$.
Then $xPyy_AP_Ay_A'u_1P'u_2y_B'P_By_Bx$ 
with colours $\col(P),c',\col(P_A),c_1,\col(P'),c_2,\col(P_B),c$
is a transversal Hamilton cycle in $\bm{G}$.

Suppose first that $|\ms{A}(\ECtwo)|$ is odd. 
Then $|A^0|=|B^0|$ and Lemma~\ref{lm:almostcp1} with $(W^*,Z^*)=(B^0,A^0)$, $t=0$, $W^-=N_{G_{c_1}}(y_A') \cap B^0$ and $T^+=N_{G_{c_2}}(y_B') \cap A^0$ immediately implies the existence of the required path between $u_1 \in W^-$ and $u_2 \in T^+$
(see the left-hand side of Figure~\ref{fig:ext}).

Suppose secondly that $|\ms{A}(\ECtwo)|$ is even.
So $|A^0|=|B^0|+1$.
Choose a colour $c_3 \in \ms{A}(\ECtwo)$ and 
$v_2\in A_{c_3} \cap A^0$ which is a vertex of $D'$.
Then choose $u_2 \in N_{G_{c_2}}(y_B') \cap A^0$ which is a neighbour of $v_2$ in $D'$.
These choices are both possible since there are at least $\eta^2n$ such neighbours in $D'$
of which at most $2\dD n$ are forbidden due to the $G_{c_2}$ neighbourhood condition.
In Case~(ii), assign an unused colour $c_4\in \ms{A}(\ECtwo)$ to $u_2v_2$ using the large guaranteed colour lists. 
In Case~(iii), assign the colour $c_4 :=n$ to $u_2v_2$.
In both cases, $u_2v_2 \in G_{c_4}$.

Now let ${\bm{G}^0}'=(G_i[A^0 \sm \{u_2,v_2\},B^0]: i\in \ms{A}(\ECtwo)\sm \{c_3, c_4\})$.
By Lemma \ref{lm:almostcp1} with $(W^*,Z^*)=(B^0,A^0 \sm \{u_2,v_2\})$ and $t=1$,
there is a transversal path $P''$ inside ${\bm{G}^0}'$ with startpoint $u_1\in N_{G_{c_1}}(y_A') \cap B^0$ and endpoint $v_1\in N_{G_{c_3}}(v_2) \cap B^0$ using colour set $\ms{A}(\ECtwo) \sm \{c_3,c_4\}$. 
We set 
$P' := u_1P''v_1v_2u_2$, using $\col(P''),c_3,c_4$ in that order
(see the right-hand side of Figure~\ref{fig:ext}).
This completes the proof.
\end{proof}

\section{Proofs of Theorems~\ref{th:rhc} and~\ref{th:rhp}}\label{sec:proof}

Finally, we can put all the ingredients together to prove our two main theorems on stability for transversal Hamilton cycles and paths.

\begin{proof}[Proof of Theorems \ref{th:rhc} and~\ref{th:rhp}]
Let $\kK>0$ and assume that $\kK<1$ or the theorems are both vacuous.
Choose additional parameters $n,\mu,\aA,\gG,\eps,\dD,\eta$ such that $n \in \mb{N}$ and
$$
0<1/n\ll \mu,\aA \ll \gG,\eps\ll \dD \ll \eta \ll \kK<1
$$
such that the conclusions of Lemma~\ref{lm:char} and Lemmas~\ref{lm:rhcstable}--\ref{lm:rhc} hold.
Let $n' \in \{n-1,n\}$ and 
let $\bm{G} =(G_1,\ldots,G_{n'})$
be a graph collection on a common vertex set $V$ of size $n$
with $\dD(\bm{G}) \geq (\frac{1}{2}-\mu)n$.

If $n'=n-1$, let $G_n := K_n$ and $\bm{J} :=(G_1,\ldots,G_n)$.
If $n'=n$, let $\bm{J} := \bm{G}$.
Suppose that $\bm{J}$ does not contain a transversal Hamilton cycle.
Then Lemma \ref{lm:rhcstable} implies that $\bm{J}$ is not $(\gG, \aA, \eps, \dD)$-stable.
Thus, without loss of generality, for every colour $i\in [(1-\gG)n]$,
the graph $G_i$ is not $\aA$-nice and hence is $\eps$-extremal
with characteristic partition $(A_i,B_i,C_i)$,
and $e(C^{\eps,\dD}_{\bm{J}}) < \dD n^2$.
We may further assume that $1$ is the colour of minimum degree in the cross graph $C^{\eps,\dD}_{\bm{J}}$, and $[(1-\gG)n] \sm N_{C_{\bm{J}}^{\eps,\dD}}(1) = [m]$. 
Let $\Cols_{\rm bad}:=[m+1,n]$ be the set of excluded colours and thus $|\Cols_{\rm bad}|\leq \gG n+2\dD n \leq 3\dD n$. 
It follows from Lemma \ref{lm:char}~that $e_{\bm{J}(\ECone)}(A_1,B_1)\leq \eps n^3$.
For every $i \in [m]$, $G_i$ and $G_1$ are not $\dD$-crossing
and hence we can swap the labels of $A_i$ and $B_i$ if necessary to get that
$|A_1 \sd A_i|,|B_1 \sd B_i| \leq \dD n$.
That is, $\bm{J}$ satisfies~\ref{it:dagger}.
We use the same notation defined after~\ref{it:dagger}.

Suppose that one of the following hold.
\begin{itemize}
\item[(i)] $|\Cols(\ECone)|\leq \eta n$ and either $e_{\bm{J}(\ECtwo)}(A_1)\leq 30\eta n^3$ or $e_{\bm{J}(\ECtwo)}(B_1)\leq 30\eta n^3$; or
\item[(ii)] $|\Cols(\ECone)|\geq \eta n$ and $e_{\bm{J}(\ECtwo)}(A_1)+e_{\bm{J}(\ECtwo)}(B_1)\leq 60\eta n^3$; or
\item[(iii)] $G_n=K_n$ and $|\Cols(\ECtwo)| < \eta n \leq |\Cols(\ECone)|$.
\end{itemize}
We claim that in these cases, $\bm{J}$ is $\kK$-close to, respectively,
\begin{itemize}
\item[(i)] 
a half-split graph collection;
\item[(ii)] 
$\bm{H}^b_a$ where $a=|\Cols(\ECone)|$ and $b=|\Cols(\ECtwo)|\pm 1$ is odd;
\item[(iii)] 
$\bm{H}^0_n$.
\end{itemize}
Indeed, for~(i), we can remove at most $30\eta n^3$ edges so that some $Z_1 \in \{A_1,B_1\}$ has no $\bm{J}(\ECtwo)$-edges, and delete all edges in graphs in $\bm{J}(\ECone) \cup \Cols_{\rm bad}$ by removing at most $\eta n^3+3\dD n^3$ edges.
Finally, edit at most $2\dD n^3$ edges to increase $|Z_1|$ by less than $\eps n$ so it has size $\lfloor n/2\rfloor+1$ and make $(Z_1,\ov{Z_1})$ complete in every graph in $\bm{J}(\ECtwo)$.
The resulting graph collection is half-split and in total we have made $31\eta n^3+5\dD n^3 < \kK n^3$ edits.
For~(ii), we can edit at most $3\dD n^3+\eps n^3+60\eta n^3$ edges
so that for every $i \in \Cols_{\rm bad}$ the graph $G_i$ becomes a copy of $\ECone$ whose parts contain $A_1,B_1$; 
$e_{G_i}(A_1,B_1)=0$ for all $G_i \in \bm{J}(\ECone)$;
and $e_{\bm{J}(\ECtwo)}(A_1)+e_{\bm{J}(\ECtwo)}(B_1)= 0$.
A further at most $\dD n^3$ edits will make $\bm{J}$ isomorphic to $\bm{H}_a^b$
where $a=|\Cols(\ECone)|$ and $b=|\Cols(\ECtwo)| \pm 1$ is odd.
Thus we make at most $61\eta n^3 < \kK \eta n^3$ edits in total.
For~(iii), we can make at most $2\eta n^3+2\dD n^3 < \kK n^3$ edits to make $\bm{J}$ isomorphic to $\bm{H}_n^0$.
This proves the claim.

Now we prove Theorem~\ref{th:rhc}. Here, $\bm{J}=\bm{G}$.
Lemma~\ref{lm:rhc} implies that, if $|\Cols(\ECone)|<\eta n$, then~(i) holds,
while if $|\Cols(\ECone)| \geq \eta n$, then~(ii) holds.
This completes the proof of the theorem.

Finally, we prove Theorem~\ref{th:rhp}.
Here, $n'=n-1$ and $\bm{J}=\bm{G} \cup \{K_n\}$, so $\bm{G}$ does not have a transversal Hamilton path
if and only if $\bm{J}$ does not have a transversal Hamilton cycle.
Lemma~\ref{lm:rhc} implies that, if $|\Cols(\ECone)| < \eta n$, then~(i) holds,
while if $|\Cols(\ECone)| \geq \eta n$, then~(iii) holds 
(and~(ii) also holds in this case).
This completes the proof of the theorem.
\end{proof}

\section{Concluding remarks}\label{sec:conclude}

In this paper, we proved Theorem~\ref{th:rhc} that any collection of $n$ almost-Dirac graphs on the same large set of $n$ vertices either has a transversal Hamilton cycle, or is close to one of several types of collection
which do not contain a transversal Hamilton cycle.
We proved Theorem~\ref{th:rhp}, an analogue for Hamilton paths, 
characterising collections of $n-1$ almost-Dirac graphs without transversal Hamilton paths.

We proved these theorems in a unified manner, combining the regularity-blow-up method and absorption method for transversals.
We believe that our method can be utilised to characterise stability for other spanning transversal embedding problems.
It provides some hope for proving exact results: i.e.~determining the best possible transversal minimum degree threshold.
Indeed, while the transversal minimum degree thresholds for Hamilton cycles and perfect matchings are known by the results of Joos and Kim~\cite{JK},
exact results are commonly proved by first proving stability by showing that any graph without the required subgraph $H$ and almost the conjectured minimum degree must have a specific structure.
Then, an extremal analysis is conducted to show that there cannot be any imperfections in this structure.
Given the literature on classical embedding, it seems unlikely that a short `elementary' argument (i.e.~without using any machinery) of the type used by Joos and Kim can be used to find exact thresholds for many of the natural graphs $H$ in which we are interested.
In particular, we think the ideas developed here may be useful in 
resolving the following conjecture,
a transversal analogue of the Hanjal-Szemer\'edi theorem~\cite{HS},
which would indeed generalise this theorem.

\begin{conj}[Transversal Hajnal-Szemer\'edi \cite{Cheng2}]\label{cj:kr}
Let $k \geq 2$ be an integer and let $n$ be a sufficiently large multiple of $k$.
Let $\bm{G}=(G_1,G_2,\ldots,G_{\frac{n}{k}\binom{k}{2}})$ be a graph collection on a common vertex set of size $n$.
Suppose $\dD(\bm{G})\geq (1-\frac{1}{k})n$.
Then $\bm{G}$ contains a transversal copy of a $K_k$-factor.
\end{conj}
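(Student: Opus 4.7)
The plan is to adapt the regularity-plus-absorption framework used to prove Theorem~\ref{th:rhc} to the $K_k$-factor setting, splitting the proof into a stable case and an extremal case exactly as in Sections~\ref{sec:extstab}--\ref{sec:ext}. The starting point is the already-known asymptotic transversal Hajnal-Szemer\'edi theorem of Cheng-Han-Wang-Wang and of Montgomery-M\"uyesser-Pehova, which gives the $K_k$-factor under the slightly weaker bound $\dD(\bm{G})\geq (1-1/k+o(1))n$; the task is therefore to close the $o(n)$ gap. The $k=2$ case is Joos-Kim, so we may assume $k\geq 3$.

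First I would identify the extremal configurations. Classically, tightness of Hajnal-Szemer\'edi is witnessed by graphs that are close to complete $k$-partite with parts of slightly unequal sizes and a divisibility obstruction on part sizes. The transversal version should have a short list of collections built from classical extremal graphs sharing a common $k$-partition, possibly together with parity/divisibility constraints on the number of colours of each ``type'' (analogous to the condition that $b$ be odd in $\bm{H}_a^b$). I would then define notions of strong and weak stability analogous to Definition~\ref{def:stab}: strongly stable if many colours are far from any extremal graph, weakly stable if many extremal graphs exist but their $k$-partitions are pairwise non-aligned. Lemmas mirroring Lemma~\ref{lm:re-close} and Lemma~\ref{lm:reducestable} would then show that stability is inherited by the reduced graph collection.

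In the stable case I would follow the four-step sketch from Section~\ref{sec:sketch}. I would build an absorbing structure consisting of many transversal $K_k$-gadgets using Lemma~\ref{lm:key}, each gadget capable of absorbing one extra vertex together with the $\binom{k}{2}$ new colours that come with it; the existence of a single gadget reduces to a counting argument driven by the minimum degree bound and stability. Then the regularity lemma (Lemma~\ref{lm:weakregcol}) and degree inheritance (Lemma~\ref{lm:inherit}) would be applied to $\bm{G}$, the approximate transversal Hajnal-Szemer\'edi theorem would produce an almost-spanning transversal $K_k$-matching in the reduced collection, and Theorem~\ref{th:blowup} would blow this up to an almost $K_k$-factor in $\bm{G}$. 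The absorbing structure would absorb all remaining vertices and colours, producing a transversal $K_k$-factor.

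The extremal case, where almost all graphs of $\bm{G}$ are close to a common extremal graph on a shared partition $V=V_1\cup\cdots\cup V_k$, is where the main obstacle lies. Here one must enumerate the transversal obstructions, show that if $\bm{G}$ is not close to any of them then local imbalances (bad vertices, bad colours, and wrong part sizes) can be corrected by a short ``cleaning'' sub-configuration in the spirit of Lemmas~\ref{lm:cover1} and~\ref{lm:cover2}, and finally fill each class by long almost-spanning transversal structures obtained from applications of the blow-up lemma as in Lemma~\ref{lm:almostcp1}. The hard part is combinatorial: for Hamilton cycles there were only two extremal graphs and one parity obstruction, but for $K_k$-factors the classical extremal family is already richer, and transversality introduces additional constraints (relative sizes of the parts modulo each colour class, and higher-order divisibility conditions) that must be identified precisely. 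A secondary technical difficulty is engineering the $K_k$-absorbing gadget: a four-vertex path sufficed for Hamilton cycles, but absorbing a full $K_k$ requires a more elaborate structure whose existence proof must work down to the exact threshold $(1-1/k)n$, without any slack from an approximate degree bound.
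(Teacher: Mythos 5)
This statement is a \emph{conjecture}, not a theorem: the paper explicitly leaves it open (it is Conjecture~\ref{cj:kr}, cited to~\cite{Cheng2}, and appears in the concluding remarks only as a direction the authors hope their methods might eventually address). There is therefore no proof in the paper against which to compare your proposal. What you have written is a plausible research programme, not a proof, and you are candid about this yourself: the passages beginning ``The hard part is combinatorial'' and ``A secondary technical difficulty'' name precisely the places where the argument would have to be invented rather than adapted.

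To be concrete about why the sketch does not close the gap: (1) You never specify the list of transversal extremal collections for $K_k$-factors, and for $k\geq 3$ this list is genuinely not known — the classical Hajnal--Szemer\'edi extremal family already contains several non-isomorphic near-$k$-partite graphs with divisibility obstructions, and the transversal version will superimpose new counting/parity constraints across colours (as the $\bm{H}_a^b$ parity condition did for cycles) that no one has catalogued. Without this catalogue, the extremal case of the proof has no target to aim at. (2) The asymptotic transversal Hajnal--Szemer\'edi theorem of~\cite{Cheng2,MMP} gives the result with $o(n)$ slack, but your Step~2 (applying it to the reduced graph) only recovers an \emph{almost}-spanning transversal $K_k$-matching there; converting that to an exact result still requires the absorbing gadget and extremal analysis to work at the sharp bound $(1-1/k)n$ with zero slack, and your proposal does not construct such a gadget — absorbing a vertex into a $K_k$-factor needs $\binom{k}{2}$ fresh colours arranged in a specific pattern, which is qualitatively harder than the $4$-vertex path gadget of Definition~\ref{def:absorb}, and the counting argument that would guarantee enough gadgets is exactly what fails first at the threshold. (3) Nothing in the sketch addresses how the reduced-collection structure lemma (the analogue of Lemma~\ref{lm:2matching}) would go: there the proof relied on a delicate analysis of maximal transversal matchings in near-Dirac collections, and no analogue for transversal $K_k$-matchings at the $(1-1/k)$-threshold exists in the literature. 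In short, your proposal correctly mirrors the architecture of the paper's Hamilton-cycle proof, but each load-bearing lemma would have to be reproven from scratch for $K_k$, and you have not done so; the conjecture remains open.
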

As mentioned, the case $k=2$ of perfect matchings was proved by Joos and Kim in~\cite{JK},
and the asymptotic version of this conjecture was proved independently in \cite{Cheng2} and \cite{MMP}.

\bibliographystyle{plain}
\bibliography{Bibte}

\end{document}